\documentclass[a4paper, reqno]{amsart}
\usepackage{indentfirst}
\usepackage{amsmath}
\usepackage{amsthm}
\usepackage{amssymb}
\usepackage{tikz}
\usepackage{adjustbox}
\usepackage{hyperref}
\usepackage{cleveref}
\usepackage{tikz-cd}
\usepackage{quiver}
\usepackage{xurl}

\usepackage{stackengine} 

\usepackage[T1]{fontenc}
\usepackage[outline]{contour}
\usepackage{xcolor}

\contourlength{0.3pt}
\contournumber{15}

\usepackage{scalerel}
\usepackage{dsfont}
\usepackage{txfonts} 
\usepackage{bbm} 
\DeclareSymbolFont{bbold}{U}{bbold}{m}{n}
\DeclareSymbolFontAlphabet{\mathbbold}{bbold}
\usepackage{mathrsfs} 
\usepackage[scr=boondox]{mathalfa} 

\DeclareFontFamily{U}{mathc}{}
\DeclareFontShape{U}{mathc}{m}{it}%
{<->s*[1.03] mathc10}{}
\DeclareMathAlphabet{\mathccal}{U}{mathc}{m}{it}

\input pdf-trans
\newbox\qbox
\def\usecolor#1{\csname\string\color@#1\endcsname\space}
\newcommand\bordercolor[1]{\colsplit{1}{#1}}
\newcommand\fillcolor[1]{\colsplit{0}{#1}}
\newcommand\colsplit[2]{\colorlet{tmpcolor}{#2}\edef\tmp{\usecolor{tmpcolor}}%
	\def\tmpB{}\expandafter\colsplithelp\tmp\relax%
	\ifnum0=#1\relax\edef\fillcol{\tmpB}\else\edef\bordercol{\tmpC}\fi}
\def\colsplithelp#1#2 #3\relax{%
	\edef\tmpB{\tmpB#1#2 }%
	\ifnum `#1>`9\relax\def\tmpC{#3}\else\colsplithelp#3\relax\fi
}
\newcommand\outline[1]{\leavevmode%
	\def\maltext{#1}%
	\setbox\qbox=\hbox{\maltext}%
	\boxgs{Q q 2 Tr \thickness\space w \fillcol\space \bordercol\space}{}%
	\copy\qbox%
}
\newcommand\mathprecalbb[2][1]{%
	\ThisStyle{%
		\stackengine{0pt}{\def\thickness{.6}\outline{$\SavedStyle\mathcal{#2}$}}{\kern.1pt\outline{$\SavedStyle\mathcal{#2}$}}{O}{l}{F}{F}{L}}}
\bordercolor{black}
\fillcolor{white}
\def\thickness{.1}

\input pdf-trans
\newbox\qbox
\newcommand\mathprebbb[2][1]{%
	\ThisStyle{%
		\stackengine{0pt}{\def\thickness{.5}\outline{$\SavedStyle\mathrm{#2}$}}{\kern.01pt\outline{$\SavedStyle\mathrm{#2}$}}{O}{l}{F}{F}{L}}}
\bordercolor{black}
\fillcolor{white}
\input pdf-trans
\newbox\qbox
\newcommand\mathbbbnobracket[2][1]{%
	\ThisStyle{%
		\stackengine{0pt}{\def\thickness{.2}\outline{$\SavedStyle\mathprebbb{#2}$}}{\kern.6pt{\def\thickness{.2}\outline{$\SavedStyle\mathrm{#2}$}}}{O}{l}{F}{F}{L}}}
\bordercolor{black}
\fillcolor{white}
\newcommand*\mathmybb[1]{{\mathprebbb{#1}}}
\newcommand*\mathbbb[1]{{\mathbbbnobracket{#1}}}

\DeclareMathAlphabet{\matheus}{U}{eus}{m}{n}
\SetMathAlphabet{\matheus}{bold}{U}{eus}{b}{n}

\DeclareFontFamily{U}{dmjhira}{}
\DeclareFontShape{U}{dmjhira}{m}{n}{ <-> dmjhira }{}
\DeclareRobustCommand{\yo}{\text{\usefont{U}{dmjhira}{m}{n}\symbol{"48}}}

\newtheorem{defi}{Definition}[subsection]

\newtheorem{theorem}[defi]{Theorem}

\AfterEndEnvironment{maintheorem}{\noindent\ignorespaces}
\newtheorem{coroll}[defi]{Corollary}
\newtheorem{pro}[defi]{Proposition}
\newtheorem{lemma}[defi]{Lemma}
\theoremstyle{definition}
\newtheorem{construct}[defi]{Construction}
\newtheorem{example}[defi]{Example}
\newtheorem{innercustomgeneric}{\customgenericname}
\providecommand{\customgenericname}{}
\newcommand{\newcustomtheorem}[2]{%
	\newenvironment{#1}[1]
	{%
		\renewcommand\customgenericname{#2}%
		\renewcommand\theinnercustomgeneric{##1}%
		\innercustomgeneric
	}
	{\endinnercustomgeneric}
}
\newcustomtheorem{customdef}{Definition}
\newcustomtheorem{customthm}{Theorem}
\newcustomtheorem{custompro}{Proposition}
\newcustomtheorem{customlem}{Lemma}
\newcustomtheorem{custommainthm}{Main Theorem}
\newcustomtheorem{customeg}{Example}
\newcustomtheorem{customconjecture}{Conjecture}
\newcustomtheorem{customidea}{Idea}
\newcustomtheorem{customcoroll}{Corollary}

\theoremstyle{remark}
\newtheorem{rk}[defi]{Remark}
\newtheorem{nota}[defi]{Notation}
\newcustomtheorem{customrk}{Remark}

\newcommand{\bbA}{\mathmybb{A}}
\newcommand{\scrA}{\mathscr{A}}

\newcommand{\calA}{\mathccal{A}}
\newcommand{\twoA}{\mathccal{A}}
\newcommand{\bbB}{\mathmybb{B}}

\newcommand{\calB}{\mathccal{B}}
\newcommand{\twoB}{\mathccal{B}}

\newcommand{\bbK}{\mathmybb{K}}
\newcommand{\calK}{\mathccal{K}}
\newcommand{\calJ}{\mathccal{J}}
\newcommand{\calR}{\mathccal{R}}

\newcommand{\bbD}{\mathmybb{D}}

\newcommand{\F}{\mathscr{F}}
\newcommand{\bbF}{\mathmybb{F}}

\newcommand{\G}{\mathccal{G}}

\newcommand{\V}{\mathccal{V}}

\newcommand{\rscc}[2][]{\beta_{#2^{#1}}}
\newcommand{\gps}[1]{{#1}\text{-}\mathrm{Space}}
\newcommand{\pgps}[1]{{#1}\text{-}\mathrm{Space}\text{-}\mathrm{Pro}}

\newcommand*{\Set}{\mathbf{Set}}
\newcommand*{\Cat}{\mathccal{Cat}}

\newcommand*{\FCat}{\mathbf{\F}\text{-}\mathccal{Cat}}

\newcommand*{\sSet}{\mathbf{Set}_\Delta}

\newcommand*{\TAlg}{\mathrm{T}\text{-}\mathrm{Alg}}
\newcommand*{\FTAlg}{\mathrm{T}\text{-}\mathmybb{Alg}}

\newcommand{\sigmalim}[2]{\mathrm{m}_{#1}\mathrm{lim\;}{#2}}

\newcommand{\dotlim}[2]{\mathrm{d}_{#1}\mathrm{lim\;}{#2}}

\newcommand{\bbEl}[1]{\mathmybb{El}{#1}}
\newcommand{\Lan}[2][]{\mathrm{Lan}_{#2}{#1}}

\newcommand{\Cone}[1]{\mathrm{Cone}_{#1}}

\newcommand*{\shortbar}{\scalebox{1.6}[1]{-}}

\definecolor{turquoise}{HTML}{00B4CE}

\usepackage{lmodern}
\usepackage{mathtools,enumitem}
\usepackage{microtype}
\usepackage{graphicx}

\usetikzlibrary{arrows,calc,decorations.pathmorphing}
\tikzset{
	marked/.style = {decoration = {markings, mark = at position 0.5 with { 
				\node[transform shape, xscale = .8, yscale=.4] {/}; } }, postaction = 
		{decorate} },
	circled/.style = {decoration = {markings, mark = at position 0.5 with { 
				\node[transform shape, scale = .7] {$\circ$}; } }, postaction = {decorate} 
	},
	loose/.style ={->,
		decorate,
		decoration={snake,amplitude=.4mm,segment length=2mm,post length=1mm}},
}

\tikzcdset{scale cd/.style={every label/.append style={scale=#1},
		cells={nodes={scale=#1}}}}

\tikzcdset{every label/.append style = {font = \footnotesize}} 

\usetikzlibrary{decorations.markings,decorations.pathmorphing,shapes}
\tikzset{double line with arrow/.style 
	args={#1,#2}{decorate,decoration={markings,%
			mark=at position 0 with {\coordinate (ta-base-1) at (0,1pt);
				\coordinate (ta-base-2) at (0,-1pt);},
			mark=at position 1 with {\draw[#1] (ta-base-1) -- (0,1pt);
				\draw[#2] (ta-base-2) -- (0,-1pt);
}}}}

\tikzset{dar/.style={double,double equal sign distance,-implies}}
\tikzset{mid/.style={anchor=mid}} 

\makeatletter
\newbox\dottedarrow@box
\setbox\dottedarrow@box\hbox
{%
	\begin{tikzpicture}
		\draw[shorten <=0.2cm, shorten >= 0.15cm , dotted,->] (-0.1,0) -- 
		(1.8em,0);
	\end{tikzpicture}%
}
\newcommand*\dottedarrow
{\relax\ifmmode\expandafter\dottedarrow@m\else\expandafter\dottedarrow@t\fi}
\newcommand*\dottedarrow@t[1][1.3em]
{\resizebox{#1}{!}{\raisebox{.5ex}{\usebox\dottedarrow@box}}}
\newcommand*\dottedarrow@m[1][]
{%
	\if\relax\detokenize{#1}\relax
	\mathchoice
	{\dottedarrow@t}
	{\dottedarrow@t}
	{\dottedarrow@t[0.5em]}
	{\dottedarrow@t[0.5em]}%
	\else
	\dottedarrow@t[#1]%
	\fi
}

\newcommand*\Neginternal[3]{\mathpalette\Neg@{{#1}{#2}{#3}}}
\newcommand*\Neg@[2]{\Neg@@{#1}#2}
\newcommand*\Neg@@[4]{%
	\mathrel{\ooalign{%
			$\m@th#1#4$\cr
			\hidewidth$\m@th#3{#1}\mkern\muexpr#2*2$\hidewidth\cr
	}}%
}
\newcommand*\mynegslash[1]{\rotatebox[origin=c]{60}{$\m@th#1\shortbar$}}
\newcommand*\amsnegslash[1]{\rotatebox[origin=c]{60}{$\m@th#1-$}}

\makeatother

\newcounter{sarrow}
\newcommand\xleadsto[1]{%
	\stepcounter{sarrow}%
	\mathrel{\begin{tikzpicture}[decoration={snake, amplitude=.4mm,segment length=2mm}]
			\node (\thesarrow) {$\scriptstyle #1$};
			\draw[->,decorate] (\thesarrow.south west) -- (\thesarrow.south 
			east);
		\end{tikzpicture}
	}
}

\renewcommand{\leadsto}{\xleadsto{\mkern9mu}}

\usepackage{graphicx}

\makeatletter
\providecommand*{\twoheadrightarrowfill@}{%
	\arrowfill@\relbar\relbar\twoheadrightarrow
}
\providecommand*{\twoheadleftarrowfill@}{%
	\arrowfill@\twoheadleftarrow\relbar\relbar
}
\providecommand*{\xtwoheadrightarrow}[2][]{%
	\ext@arrow 0579\twoheadrightarrowfill@{#1}{#2}%
}
\providecommand*{\xtwoheadleftarrow}[2][]{%
	\ext@arrow 5097\twoheadleftarrowfill@{#1}{#2}%
}
\makeatother

\usetikzlibrary{graphs,graphs.standard,quotes}

\setlist[enumerate,1]{label=\textup{(\arabic*)}}
\setlist[enumerate,2]{label=\textup{(\alph*)}}

\newcommand{\longref}[2]{\hyperref[#2]{#1~\textup{\ref*{#2}}}}
\newcommand{\eqtref}[2]{\hyperref[#2]{#1~\textup{(\ref*{#2})}}}

\newcommand*{\op}{\mathrm{op}}

\newcommand*{\FMod}[1]{\mathmybb{Mod}_{#1}}

\DeclareMathOperator*{\colim}{colim}

\DeclareMathOperator{\ob}{ob}
\DeclareMathOperator{\mor}{mor}

\numberwithin{equation}{section}

\newcounter{subsubsubsection}[subsubsection]

\makeatletter
\renewcommand\paragraph{\@startsection{paragraph}{5}{\z@}%
	{3.25ex \@plus1ex \@minus.2ex}%
	{-1em}%
	{\normalfont\normalsize\bfseries}}
\renewcommand\subparagraph{\@startsection{subparagraph}{6}{\parindent}%
	{3.25ex \@plus1ex \@minus .2ex}%
	{-1em}%
	{\normalfont\normalsize\bfseries}}
\def\toclevel@subsubsubsection{4}
\def\toclevel@paragraph{5}
\def\toclevel@paragraph{6}
\def\l@subsubsubsection{\@dottedtocline{4}{7em}{4em}}
\def\l@paragraph{\@dottedtocline{5}{10em}{5em}}
\def\l@subparagraph{\@dottedtocline{6}{14em}{6em}}
\makeatother

\makeatletter
\newcommand\makebig[2]{%
	\@xp\newcommand\@xp*\csname#1\endcsname{\bBigg@{#2}}%
	\@xp\newcommand\@xp*\csname#1l\endcsname{\@xp\mathopen\csname#1\endcsname}%
	\@xp\newcommand\@xp*\csname#1r\endcsname{\@xp\mathclose\csname#1\endcsname}%
}
\makeatother
\makebig{Bigggg}{4.5}

\usepackage[margin=22.2mm]{geometry} 

\title[Models of sketches as algebras over monads]{Enhanced $2$-categories of models of sketches \\ as enhanced $2$-categories of algebras over monads}
\author{Joanna Ko}
\email{joanna.ko.maths@gmail.com}
\address{Topos Research UK, Urbanoid Workspace, 
	1 Kings Meadow, Osney Mead, Oxford OX2 0DP, United Kingdom}

\keywords{enhanced $2$-categories; enhanced limit $2$-sketches; enhanced $2$-monads; limits}
\subjclass{18C10; 18N15; 18C30; 18C40; 18D20}

\begin{document}
	
	\begin{abstract}
		We establish the equivalence between models of enhanced $2$-sketches and algebras over monads, including the (co)lax morphisms. More precisely, for any enhanced limit $2$-sketch $\mathmybb{T}$ with tight cones, the enhanced $2$-category $\FMod{s, w}(\mathmybb{T}, \bbK)$ of models of $\mathmybb{T}$ in a locally presentable enhanced $2$-category $\bbK$, in which the tight and the loose morphisms are the $\F$-natural transformations and the loose $w$-natural transformations, respectively, is equivalent to the enhanced $2$-category $\FTAlg_{s, w}$ of algebras over an enhanced $2$-monad $T$ on the models $\FMod{}(\mathccal{T}_\tau, \bbK)$ restricted to the tight morphisms  with strict $T$-morphisms and $w$-$T$-morphisms.
		
		As a consequence, we explore the limits in the enhanced $2$-category $\FMod{s, w}(\mathmybb{T}, \bbK)$ of models with loose $w$-natural transformations, and conclude that $\FMod{s, w}(\mathmybb{T}, \bbK)$ inherits all $w$-rigged limits.
		
		Along the way, we establish an enriched analogue of the Orthogonal Sub-category Theorem, and generalise results on the reflectivity and the monadicity of models of enriched limit sketches in the base of enrichment to any arbitrary locally presentable enriched category.
		%
	\end{abstract}
	\maketitle
	\tableofcontents
	
	\section{Introduction}
	\label{sec:intro}
	%
	%
	Many structures of interest in $2$-category theory have aspects that are inherently strict. For instance, a monoidal double category $\bbD$ requires both the source and the target $s, t \colon \bbD_1 \rightrightarrows \bbD_0$ to be \emph{strict} monoidal functors; yet, the unit $i \colon \bbD_0 \to \bbD_1$ and the composition $c \colon \bbD_1 \times_{\bbD_0} \bbD_1 \to \bbD_1$ are only required to be \emph{pseudo} monoidal functors. Unlike double categories, which can be described as pseudo-categories internal to the $2$-category $\Cat$ of categories,  monoidal double categories cannot be simply viewed as pseudo-categories internal to the $2$-category $\mathccal{MonCat}_p$ of monoidal categories and pseudo monoidal functors. Because of this, a mixture of strictness and pseudoness is involved in describing monoidal double categories.
	
	Moreover, many weak morphisms between structures that are strictly $2$-categorical still involve strictness implicitly. Suppose we are given a pair of monoidal categories $A$ and $B$, viewed as $2$-functors from the limit $2$-sketch $\mathccal{M}$ of pseudo-monoids to $\Cat$. Given that a strict monoidal functor $A \to B$ can be described as a $2$-natural transformation, it is natural to guess that a lax monoidal functor can be described as a lax natural transformation $\phi \colon A \to B$. However, a lax monoidal functor only requires lax naturality with respect to the unit and the multiplication. More precisely, the $2$-components corresponding to the product projections $\pi_1$ and $\pi_2$ need to be identities, i.e., the diagrams
	\[
	\begin{tikzcd}
		{A \times A} & {B \times B} \\
		A & B
		\arrow["{\phi \times \phi}", from=1-1, to=1-2]
		\arrow[""{name=0, anchor=center, inner sep=0}, "{(\pi_1)_A}"', from=1-1, to=2-1]
		\arrow[""{name=1, anchor=center, inner sep=0}, "{(\pi_1)_B}", from=1-2, to=2-2]
		\arrow["\phi"', from=2-1, to=2-2]
		\arrow["{=}"{description}, draw=none, from=0, to=1]
	\end{tikzcd}
	\hspace{4em}
	\begin{tikzcd}
		{A \times A} & {B \times B} \\
		A & B
		\arrow["{\phi\times \phi}", from=1-1, to=1-2]
		\arrow[""{name=0, anchor=center, inner sep=0}, "{(\pi_2)_A}"', from=1-1, to=2-1]
		\arrow[""{name=1, anchor=center, inner sep=0}, "{(\pi_2)_B}", from=1-2, to=2-2]
		\arrow["\phi"', from=2-1, to=2-2]
		\arrow["{=}"{description}, draw=none, from=0, to=1]
	\end{tikzcd}
	\]
	need to be strictly commutative. In other words, a lax monoidal functor has to be \emph{$2$-natural} with respect to the projections $\pi_1$ and $\pi_2$. This means that the notion of lax natural transformations fails to capture lax monoidal functors. 
	
	In \cite{ABK:2024}, \emph{enhanced limit $2$-sketche}s and \emph{loose $w$-natural transformations} between models of enhanced limit $2$-sketches are proposed to overcome the above obstacles. Roughly speaking, an enhanced limit $2$-sketch is a limit $2$-sketch with two types of $1$-morphisms: the \emph{tight} ones and the \emph{loose} ones; a loose $w$-natural transformation between models of enhanced limit $2$-sketches is a $w$-natural transformation that \emph{restricts} to a \emph{strict} $2$-natural transformation on tight morphisms.
	
	
	With these new concepts, the vast majority of $2$-dimensional structures and morphisms which cannot be captured through ordinary limit $2$-sketches or enriched transformations between models of enriched limit sketches may successfully be unified and described under the same roof. For instance, pseudo double categories, monoidal double categories, horizontal or vertical intercategories, and double (op)fibrations can all be described as models of enhanced limit $2$-sketches; $w$-monoidal functors, $w$-double functors, and $w$-morphisms of fibrations can all be seen as loose $w$-natural transformations.
	
	Apart from the theoretical aspect, the theory of enhanced limit $2$-sketches has seen great use in \emph{Applied Category Theory}. A central mission in Applied Category Theory is to study the compositionality of modular systems, in which modularity can often be described as certain $2$-categorical structures, including pseudo double categories, monoidal double categories, or symmetric monoidal loose right modules, as shown in \cite{CM:2026}, \cite{Baez:2025}, \cite{KSW:1997}, and \cite{Loregian:2025}. In \cite{BCLM:2025} and also \cite{LM:2025}, enhanced limit $2$-sketches are used to consider these $2$-algebraic structures formally. Understanding models of enhanced limit $2$-sketches then becomes significant to the further development of applications of Category Theory.
	
	This article takes the first step to investigate the features of models of enhanced limit $2$-sketches.
	
	First, we establish that the enhanced $2$-category of models of an enhanced limit $2$-sketch with tight cones is equivalent to the enhanced $2$-category of algebras over an enhanced $2$-monad.
	\begin{customthm}{\hspace{-1mm}}
		Let $\bbK$ be a locally presentable enhanced $2$-category. Let $\mathmybb{T}$ be an enhanced limit $2$-sketch with tight weighted cones, i.e., the shapes of weighted cones in $\mathmybb{T}$ are $2$-categories, viewed as chordate $\F$-categories. There is an enhanced $2$-monad $T$ on the enhanced $2$-category $\FMod{}(\mathccal{T}_\tau, \bbK)$ of models restricted to the tight morphisms such that
		$$\FMod{s, w}(\mathmybb{T}, \bbK) \simeq \FTAlg_{s, w}$$
		is an equivalence of enhanced $2$-categories.
	\end{customthm}
	
	As we will see in the proof of the theorem, the corresponding enhanced $2$-monad actually acts on the complete enhanced $2$-cateogry $\FMod{}(\mathccal{T}_\tau, \bbK)$ of models restricted to the tight part of the enhanced limit $2$-sketch. As a result, we establish an existence result for limits in $\FMod{s, w}(\mathmybb{T}, \bbK)$.
	\begin{customcoroll}{\hspace{-1mm}}
		Let $\mathmybb{T}$ be an enhanced limit $2$-sketch with tight weighted cones, and $\bbK$ be a locally presentable enhanced $2$-category. The restriction
		$$\mathmybb{Mod}_{s, w}(\mathmybb{T}, \mathmybb{K}) \to \mathmybb{Mod}_{s, w}(\mathccal{T}_\tau, \mathmybb{K})$$
		creates all $w$-rigged limits.
	\end{customcoroll}
	
	It is worth noting that in establishing our main theorem, we heavily rely on the notion of \emph{dotted $2$-limits} in \cite{Ko:2023}. More precisely, in showing that the enhanced $2$-category $\FMod{s, w}(\mathmybb{T}, \bbK)$ admits $\overline{w}$-limits of loose morphisms, we have to make use of dotted $2$-limits. The reason that one might not be able to prove via weighted limits directly is that the functor enhanced $2$-category defining weighted limits involves morphisms that are too strict, when comparing to the enhanced $2$-category of models, whose loose morphisms are \emph{loose $w$-natural transformations}, instead of $2$-natural transformations between loose parts. On the other hand, dotted $2$-limits are defined via dotted-$w$-natural transformations, which are closely related to loose $w$-natural transformations.
	
	Aside from our main results on models of enhanced limit $2$-sketches, we establish and generalise some statements in the theory of enriched limit sketches throughout the paper. Assuming that the base $\V$ of enrichment is equipped with a nice weak factorisation system, we show that models of any limit $\V$-sketch in a locally presentable $\V$-category $\matheus{K}$ are reflective. Moreover, we prove that the restriction of models in a locally presentable $\V$-category along any essentially surjective and cone-reflecting morphism of $\V$-sketches is always monadic. These strengthen several well-known results in \cite{book:Kelly:1982}, \cite{LWP:2024}, and \cite{BG:2019}, which only consider models in the base of enrichment $\V$.
	
	The article is outlined as follows. In \longref{Section}{sec:enriched_orthogonality}, we introduce the notion of orthogonal objects with respect to a class of morphisms, and relate it to enriched lifting properties. We then propose a general notion of models of limit $\V$-sketches with respect to a class of morphisms, and show that they are indeed orthogonal objects. In \longref{Section}{sec:reflect}, we first establish an enriched analogue of the celebrated Orthogonal Sub-category Theorem, and prove that models of limit  $\V$-sketches are reflective in the functor $\V$-category. In \longref{Section}{sec:monadicity}, we prove a monadicity result for models of limit $\V$-sketches in an arbitrary locally presentable $\V$-category, and then move on to recall basics in enhanced $2$-category theory and enhanced limit $2$-sketches. After that, we briefly discuss $2$-enhanced $2$-categories. At last, we establish the equivalence between models of an enhanced limit $2$-sketch with tight cones and algebras over an enhanced $2$-monads, and investigate further their limits.

	\section*{Acknowledgement}
	The author would like to acknowledge David Jaz Myers for his help and guidance. The author would also like to thank Timothy Hosgood, Jan Jurka, Giuseppe Leoncini, and Lukáš Vokřínek, for the fruitful discussions. The author is thankful to Nathanael Arkor and John Bourke for their comments on the early version of this article.
	
	The author is supported by Advanced Research + Invention Agency grant MSAI-PR01-P14.

	\section{Enriched orthogonality}
	\label{sec:enriched_orthogonality}	
	
	In this section, we propose several notions that generalise the ordinary orthogonality of objects and lifting properties of morphisms. 
	
	Let $\V$ be a Bénabou cosmos, i.e. $(\V, \otimes, I)$ is a (co)complete symmetric monoidal closed category. Let $\matheus{K}$ be a $\V$-category, i.e. $\matheus{K}$ is enriched in $\V$. Let $\mathccal{R}$ be a class of morphisms in $\V_0$ which is closed under isomorphisms and composition.
	
	\subsection{Orthogonality and lifting properties with respect to a class}
	
	We first generalise the notion of orthogonal objects to the enriched setting.
	
	\begin{defi}
		\label{def:orthogonal}
		Let $m \colon M_1 \to M_2$ be a morphism in $\V_0$.
		An object $K$ of $\matheus{K}$ is \emph{orthogonal to $m$ with respect to $\mathccal{R}$} precisely when the pre-composition
		$$\matheus{K}(M_2, K) \xrightarrow{\matheus{K}(m, K)} \matheus{K}(M_1, K)$$
		belongs to $\mathccal{R}$.
	\end{defi}
	
	\begin{example}
		When $\V = \Set$ and $\calR$ consists of bijections precisely, we recover the ordinary notion of orthogonal objects.
	\end{example}
	
	\begin{example}
		When $\V = \Set$ and $\calR$ consists of surjections precisely, we recover weak orthogonality.
	\end{example}
	
	\begin{example}
		When $\V = \sSet$, the category of simplicial sets, and $\calR$ contains all the weak equivalences, the objects which are orthogonal to the horn inclusions with respect to $\calR$ are precisely the Kan complices.
	\end{example}
	
	\begin{rk}
		Our notion of orthogonality with respect to $\calR$ coincides with the concept of $\calR$-injectivity in \cite{LR:2012}.
	\end{rk}
	
	\begin{nota}
		For a class $\matheus{M}$ of morphisms in $\matheus{K}_0$, we denote by $\matheus{M}^{\perp^\mathccal{R}}$ the full sub-$\V$-category of objects which are orthogonal to all morphisms in $\matheus{M}$ with respect to $\mathccal{R}$.
		
		When $\calR$ consists of precisely the isomorphisms in $\V$, then we omit the superscript '$\calR$'.
	\end{nota}
	
	As in the ordinary case, the notion of orthogonality of objects should be related to certain lifting properties of morphisms. The notion of morphisms with lifting properties in the enriched setting has been proposed in {\cite{Jurka:2025}}.
	
	\begin{construct}[{\cite[Definition 4.7]{Jurka:2025}}]
		Let $f \colon A \to B$ and $g \colon C \to D$  be two morphisms in $\matheus{K}_0$.
		
		Consider the pullback $\mathrm{Sq}(f, g)$
		\[
		\begin{tikzcd}[ampersand replacement=\&]
			{\matheus{K}(B, C)} \&\&\& \\
			\& {\mathrm{Sq}(f, g)} \&\& {\matheus{K}(A, C)} \\
			\\
			\& {\matheus{K}(B, D)} \&\& {\matheus{K}(A, D)}
			\arrow["{\langle f, g \rangle}"{description}, dashed, from=1-1, to=2-2]
			\arrow["{\matheus{K}(f, C)}", from=1-1, to=2-4]
			\arrow["{\matheus{K}(B, g)}"', from=1-1, to=4-2]
			\arrow[from=2-2, to=2-4]
			\arrow[from=2-2, to=4-2]
			\arrow["\lrcorner"{anchor=center, pos=0.125}, draw=none, from=2-2, to=4-4]
			\arrow["{\matheus{K}(A, g)}", from=2-4, to=4-4]
			\arrow["{\matheus{K}(f, D)}"', from=4-2, to=4-4]
		\end{tikzcd}
		\]
		of $\matheus{K}(f, D)$ along $\matheus{K}(A, g)$ in $\V_0$, and the commutative diagram $\matheus{K}(f, D) \cdot \matheus{K}(B, g) = \matheus{K}(A, g) \cdot \matheus{K}(f, C)$. There is a uniquely induced map, which we denote by $\langle f, g \rangle \colon \matheus{K}(B, C) \to \mathrm{Sq}(f, g)$, making the above diagram commute. 
	\end{construct}
	
	\begin{defi}
		\label{def:lifting}
		Let $f \colon A \to B$ and $g \colon C \to D$ be morphisms in $\matheus{K}_0$.
		We say that $f$ \emph{has the left lifting property against $g$ with respect to $\mathccal{R}$} (or. equivalently, $g$ \emph{has the right lifting property against $f$ with respect to $\mathccal{R}$}) just when the induced map $\langle f, g \rangle$ belongs to $\calR$.
	\end{defi}
	
	\begin{nota}
		Using the notation in {\cite[Definition 4.7]{Jurka:2025}}, when $f$ and $g$ have the lifting properties against each other, we may write $f \pitchfork^\calR g$. We then define, for a class $\matheus{M}$ of moprhisms in $\matheus{K}_0$,
		$$
		\begin{aligned}
			&\matheus{M}^{\pitchfork^\calR} := \{g \colon f \pitchfork^\calR g, \forall f \in \matheus{M}\},
			\\
			&^{\pitchfork^\calR}\matheus{M} := \{f \colon f \pitchfork^\calR g, \forall g \in \matheus{M}\}.
		\end{aligned}
		$$
	\end{nota}
	
	Similar to the ordinary situation, the existence of a terminal object is necessary for bridging and unifying the two separate concepts of orthogonal objects and morphisms with lifting properties.
	
	\begin{lemma}
		\label{lem:bridge}
		Suppose $\matheus{K}$ admits a terminal object $1$. Then, for any class $\matheus{M}$ of morphisms in $\matheus{K}_0$, an object $K$ of $\matheus{K}$ belongs to $\matheus{M}^{\perp^\calR}$ if and only if the unique morphism $!_K \colon K \to 1$ to the terminal object is in $\matheus{M}^{\pitchfork^\calR}$.
	\end{lemma}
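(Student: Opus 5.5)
The plan is to compare, for a fixed $m \colon M_1 \to M_2$ in $\matheus{M}$, the two maps that the definitions produce. By Definition~\ref{def:orthogonal}, $K$ is orthogonal to $m$ with respect to $\calR$ exactly when $\matheus{K}(m, K) \colon \matheus{K}(M_2, K) \to \matheus{K}(M_1, K)$ lies in $\calR$. By Definition~\ref{def:lifting}, $m \pitchfork^\calR {!_K}$ holds exactly when $\langle m, !_K \rangle \colon \matheus{K}(M_2, K) \to \mathrm{Sq}(m, !_K)$ lies in $\calR$. Both maps have the same domain, so the strategy is to identify $\mathrm{Sq}(m, !_K)$ with $\matheus{K}(M_1, K)$ and $\langle m, !_K\rangle$ with $\matheus{K}(m,K)$. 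I will do this only up to an isomorphism in $\V_0$, and then use that $\calR$ is closed under isomorphisms.

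First, I use that $1$ is terminal in the enriched sense, so $\matheus{K}(X, 1) \cong I$, the terminal object of $\V_0$, for every $X$. (If the statement only intends an ordinary terminal object of $\matheus{K}_0$, I will add that it is taken as a $\V$-enriched terminal object; that is the natural reading here.) Hence in the pullback square defining $\mathrm{Sq}(m, !_K)$, the bottom-right corner $\matheus{K}(M_1, 1)$ and the bottom-left corner $\matheus{K}(M_2, 1)$ are both terminal in $\V_0$. The pullback of a cospan over a terminal object is a product, so $\mathrm{Sq}(m, !_K) \cong \matheus{K}(M_1, K) \times \matheus{K}(M_2, 1) \cong \matheus{K}(M_1, K)$. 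Under this identification, the top projection $\mathrm{Sq}(m, !_K) \to \matheus{K}(M_1, K)$ becomes the identity, or more precisely an isomorphism.

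Next, I identify the induced map. By construction, $\langle m, !_K\rangle$ composed with the top projection equals $\matheus{K}(m, K)$. Since that projection is an isomorphism $p$, we get $\langle m, !_K\rangle = p^{-1}\cdot \matheus{K}(m,K)$. Because $\calR$ is closed under isomorphisms, understood as closed under composition with isomorphisms (isomorphisms of arrows), $\langle m, !_K\rangle \in \calR$ if and only if $\matheus{K}(m,K)\in\calR$.

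Finally, I quantify over all $m \in \matheus{M}$. The object $K$ lies in $\matheus{M}^{\perp^\calR}$ iff $\matheus{K}(m,K) \in \calR$ for all such $m$, and $!_K$ lies in $\matheus{M}^{\pitchfork^\calR}$ iff $\langle m, !_K\rangle \in \calR$ for all such $m$; the previous step shows these conditions agree term by term. The only delicate point is the first step: making sure the enriched terminality gives $\matheus{K}(X,1)$ terminal in $\V_0$, so that the pullback degenerates. Everything else is formal.
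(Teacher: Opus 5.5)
Your proposal is correct and is essentially the paper's proof: since the corners $\matheus{K}(M_1,1)$ and $\matheus{K}(M_2,1)$ are terminal in $\V_0$, the projection $\mathrm{Sq}(m,!_K)\to\matheus{K}(M_1,K)$ is an isomorphism $f$ with $f\cdot\langle m,!_K\rangle=\matheus{K}(m,K)$, and closure of $\calR$ under isomorphisms gives the equivalence (you are right that this needs $1$ to be terminal in the enriched sense, which the paper uses implicitly). One small slip: the terminal object of $\V_0$ is not in general the monoidal unit $I$, since they agree only for cartesian $\V$ such as $\F$; your argument uses only the terminality of $\matheus{K}(X,1)$, so nothing breaks.
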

	
	\begin{proof}
		Consider, for a morphism $m \colon M_1 \to M_2$ in $\matheus{M}$, the pullback
		\begin{equation}
			\label{diag:gap_map}
			\begin{tikzcd}[ampersand replacement=\&]
				{\matheus{K}(M_2, K)} \&\&\& \\
				\& {\mathrm{Sq}(m, !_K)} \&\& {\matheus{K}(M_1, K)} \\
				\\
				\& {1\cong\matheus{K}(M_2, 1)} \&\& {\matheus{K}(M_2, 1)\cong 1}
				\arrow["{\langle m, !_K \rangle}"{description}, dashed, from=1-1, to=2-2]
				\arrow["{\matheus{K}(m, K)}", from=1-1, to=2-4]
				\arrow["{\matheus{K}(M_2, !_K)}"', from=1-1, to=4-2]
				\arrow["\cong"', from=2-2, to=2-4]
				\arrow[from=2-2, to=4-2]
				\arrow["\lrcorner"{anchor=center, pos=0.125}, draw=none, from=2-2, to=4-4]
				\arrow["{\matheus{K}(M_1, !_K)}", from=2-4, to=4-4]
				\arrow["\cong"', from=4-2, to=4-4]
			\end{tikzcd}
		\end{equation}
		of $\matheus{K}(m, 1)$ along $\matheus{K}(M_1, !_K)$, and the induced map $\langle m, !_K \rangle$. Clearly, we have $f \cdot \langle m, !_K \rangle = \matheus{K}(m, K)$, for an isomorphism $f$. In other words, $\langle m, !_K \rangle \in \calR$ if and only if $\matheus{K}(m, K) \in \calR$.
	\end{proof}
	
	\begin{rk}
		This recovers the fact that an object is orthogonal to a class $\matheus{M}$ of morphisms in a category if and only if the map from the object to the terminal object has the right lifting property against $\matheus{M}$.
	\end{rk}
	
	\subsection{The orthogonality of $\V$-models with respect to a class}
	\label{sec:model_orthogonal}
	
	We now turn to the theory of enriched limit sketches, which is first introduced by Kelly in \cite[Chapter 6]{book:Kelly:1982}.
	
	We will see that $\V$-functors which weakly respect cones in an enriched limit sketch are actually orthogonal objects.
	
	\begin{defi}[{\cite[\S6.3]{book:Kelly:1982}}]
		\label{def:V-sketch}
		A \emph{limit $\V$-sketch} consists of a small $\V$-category $\matheus{S}$, equipped with an $I$-indexed collection $\Cone{\matheus{S}}$ of $\V$-natural transformations
		$$\{(W_i \colon \matheus{J}_i \to \V, \; D_i \colon \matheus{J}_i \to \matheus{S}, \; s_i \in \matheus{S}, \; \gamma_i \colon W_i \to \matheus{S}(s_i, D_i -))\}_{i \in I}$$
		which are called \emph{weighted cones}.
		
		A \emph{morphism of limit $\V$-sketches} is a $\V$-functor $i \colon \matheus{S} \to \matheus{T}$ between two limit $\V$-sketches $\matheus{S}$ and $\matheus{T}$, such that the image
		$$(W \colon \matheus{J} \to \V, \; i \cdot D \colon \matheus{J} \to \matheus{T}, \; i(s) \in \matheus{T}, \; i \cdot \gamma_i \colon 	W \xrightarrow{\gamma} \matheus{S}(s, D -) \xrightarrow{i} \matheus{K}(i(s), i \cdot D -))$$
		of a weighted cone $(W, D, s, \gamma)$ in $\matheus{S}$ is a weighted cone in $\matheus{T}$; in other words, $i$ preserves weighted cones.
	\end{defi}
	
	Instead of recalling the definition of $\V$-models, we introduce a weaker notion than the usual $\V$-models of limit $\V$-sketches. For this, we assume further that $\matheus{K}$ is a complete $\V$-category.
	
	\begin{defi}
		\label{def:model_wrt_R}
		Let $\matheus{S}$ be a limit $\V$-sketch with weighted cones $\{W_i, D_i, s_i, \gamma_i\}_{i \in I}$.
		A \emph{$\V$-model with respect to $\calR$ in $\matheus{K}$} is a $\V$-functor $F \colon \matheus{S} \to \matheus{K}$ such that for any $i \in I$, the post-composition
		$$\matheus{K}(K, F(s_i)) \xrightarrow{\matheus{K}(K, \rho_i)} \matheus{K}(K, \{W_i, FD_i\})$$
		for any object $K$ of $\matheus{K}$ belongs to $\calR$, where
		\begin{equation*}
			\label{eqt:universal_map}
			F(s_i) \xrightarrow{\rho_i} \{W_i, FD_i\}
		\end{equation*}
		is induced by the $\V$-natural transformation
		\begin{equation}
			\label{eqt:model}
			W_i \xrightarrow{\gamma_i} \matheus{S}(s_i, D_i -) \xrightarrow{F} \matheus{K}(F(s_i), FD_i -)
		\end{equation}
		under the universal property of the weighted limit $\{W_i, FD_i\}$.
	\end{defi}
	
	In other words, $\V$-models with respect to $\calR$, unlike $\V$-models, do not preserve weighted cones, i.e., they do not send weighted cones in the domain limit $\V$-sketch to limit cones in the codomain; instead, they preserve them up to the class $\calR$.
	
	Our notion of models with respect to a class $\calR$ might seem unfamiliar at first glance, yet, this is because we are trying to capture higher categorical weakenings of equivalences, via a $1$-categorical approach with enrichment. In fact, this is a natural generality which is consistent with enriched orthogonality and enriched lifting. The following examples illustrate how our notion works well in a higher setting, and is able to capture weaker notions of limits.
	
	\begin{example}
		Let $\V = \Cat$, and let $\calR$ be the class of equivalences of categories. Let $\mathccal{S}$ be a limit $2$-sketch equipped with \emph{flexible} weighted cones, introduced in \cite{BKPS:1989}.
		
		A model is then a $2$-functor $F \colon \mathccal{S} \to \mathccal{K}$ such that for any weighted cone $(W, D, s, \gamma)$ in $\mathccal{S}$, the image $F(s)$ shares the universal property of bilimits. For, the composite
		$$
		{\calK(K, F(s))} \xrightarrow[\simeq]{\calK(K, \rho)} \calK(K, \{W, FD\}) {\xrightarrow[\cong]{}} [\mathccal{J}, \Cat](W, \calK(K, FD-))
		$$		
		is clearly an equivalence, for any object $K$ of $\calK$. Now, since $W$ is flexible, according to \cite[Proposition 4.2]{BKPS:1989}, we have
		$$[\mathccal{J}, \Cat](W, \calK(K, FD-)) {\xrightarrow[\simeq]{}} [\mathccal{J}, \Cat](W', \calK(K, FD-)) {\xrightarrow[\cong]{}} [\mathccal{J}, \Cat]_p(W, \calK(K, FD-)),$$
		where $W'$ denotes the image of $W$ under the pseudo morphism classifier, and $[\mathccal{J}, \Cat]_p$ is the $2$-category of $2$-functors from $\mathccal{J}$ to $\Cat$ and their pseudo natural transformations and modifications.
		
		Altogether, we recover \cite[(2.6)]{BKPS:1989}, and thus $F(s)$ has the universal property of bilimits.
	\end{example}
	
	\begin{example}
		Let $\V = \sSet$ and $\calR$ be the class of weak equivalences. Let $\matheus{S} = \mathmybb{\Delta}^\op$, the opposite of the simplex category, with weighted cones given by cones of the form
		\[
		\begin{tikzcd}[ampersand replacement=\&]
			\&\&\& {[n]} \&\&\& \\
			\& {[1]} \&\& \cdots \&\& {[1]} \\
			{[0]} \&\& {[0]} \& \cdots \& {[0]} \&\& {[0]}
			\arrow[""{name=0, anchor=center, inner sep=0}, "{\delta_n}"', from=1-4, to=2-2]
			\arrow[""{name=1, anchor=center, inner sep=0}, "{\delta_0}", from=1-4, to=2-6]
			\arrow["{\delta_1}"', from=2-2, to=3-1]
			\arrow["{\delta_0}", from=2-2, to=3-3]
			\arrow[""{name=2, anchor=center, inner sep=0}, "{\delta_1}"', from=2-4, to=3-3]
			\arrow[""{name=3, anchor=center, inner sep=0}, "{\delta_0}", from=2-4, to=3-5]
			\arrow["{\delta_1}"', from=2-6, to=3-5]
			\arrow["{\delta_0}", from=2-6, to=3-7]
			\arrow["\cdots"{description}, draw=none, from=0, to=1]
			\arrow["\cdots"{description}, draw=none, from=2, to=3]
		\end{tikzcd},
		\]
		where $\delta_i$ denotes the evident face maps. Models of $\matheus{S}$ in $\V = \sSet$ with respect to $\calR$ are simplicial spaces that satisfy the Segal condition defined in \cite[Definition 3.1]{JT:2007}  precisely.
	\end{example}
	
	\begin{example}
		When $\calR$ consists of precisely the isomorphisms in $\V$, it is obvious that \longref{Definition}{def:model_wrt_R} recovers the definition of $\V$-models of a limit $\V$-sketch.
	\end{example}
	
	We are now ready to prove that models with respect to a class are orthogonal objects. 
	
	\begin{construct}
		\label{construct:sigma}
		Let $\yo \colon \matheus{S}^\op \to [\matheus{S}, \V]$ be the Yoneda embedding, and $(W, D, s \gamma)$ be a weighted cone in $\matheus{S}$. Consider the composite
		\begin{align*}
			\matheus{J}^\op &\xrightarrow{D^\op} \matheus{S}^\op \xrightarrow{\yo} [\matheus{S}, \V]
			\\
			j &\mapsto D^\op(j) \mapsto \matheus{S}(-, D^\op(j)).	
		\end{align*}
		Since $\V$ is cocomplete, the weighted colimit $W * \yo D^\op$ of the composite exists. 
		
		By the universal property, there exists a unique map
		\begin{equation*}
			\label{eqt:universal_map_2}
			W * \yo D^\op \xrightarrow{\sigma} \yo(s)
		\end{equation*}
		in $[\matheus{S}, \V]$, which corresponds to the $\V$-natural transformation \longref{}{eqt:model} with $F = \yo$.	
	\end{construct}
	
	\begin{lemma}
		\label{lem:model_orthogonal}
		Let $\matheus{S}$ be a $\V$-sketch with weighted cones $\{(W_i, D_i, s_i, \gamma_i)\}_{i \in I}$, and $\matheus{K}$ be a complete $\V$-category. A $\V$-functor $F \colon \matheus{S} \to \matheus{K}$ is a $\V$-model with respect to $\calR$ if and only if for each object $K$ of $\matheus{K}$, $\matheus{K}(K, F-)$ is orthogonal to the class $\{W_i * \yo D_i^\op \xrightarrow{\sigma_i} \yo(s_i)\}_{i \in I}$ of maps defined in \longref{Construction}{construct:sigma}.
		
		As a consequence, $F \colon \matheus{S} \to \matheus{K}$ is a $\V$-model with respect to $\calR$ if and only if $F$ is orthogonal to the class $\{(W_i * \yo D_i^\op) \otimes K \xrightarrow{\sigma_i \otimes K} \yo(s_i) \otimes K\}_{i \in I, K \in \ob\matheus{K}}$ of maps in $[\matheus{S}, \matheus{K}]$ with respect to $\calR$.
	\end{lemma}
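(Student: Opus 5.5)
The plan is to identify the map $\matheus{K}(K,\rho_i)$ in \longref{Definition}{def:model_wrt_R} with the precomposition map $[\matheus{S},\V](\sigma_i, \matheus{K}(K,F-))$, up to isomorphisms on both ends. Since $\calR$ is closed under isomorphisms and composition, membership in $\calR$ is then invariant, and the first claim follows.

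For the first claim I fix an object $K$ and write $G = \matheus{K}(K,F-) \colon \matheus{S}\to\V$. The Yoneda lemma gives $[\matheus{S},\V](\yo(s_i), G) \cong G(s_i) = \matheus{K}(K,F(s_i))$. The universal property of the weighted colimit gives
$$[\matheus{S},\V](W_i * \yo D_i^\op, G) \cong [\matheus{J}_i,\V](W_i, [\matheus{S},\V](\yo D_i^\op -, G)) \cong [\matheus{J}_i,\V](W_i, G D_i -).$$
On the other side, the defining property of the weighted limit gives $\matheus{K}(K,\{W_i,FD_i\}) \cong [\matheus{J}_i,\V](W_i, \matheus{K}(K,FD_i-))$, which is the same object.

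The key step is to check that under these isomorphisms both maps become the same map $\matheus{K}(K,F(s_i)) \to [\matheus{J}_i,\V](W_i,\matheus{K}(K,FD_i-))$, namely the one sending $x$ to $W_i \xrightarrow{\gamma_i} \matheus{S}(s_i,D_i-) \xrightarrow{F} \matheus{K}(F s_i, FD_i-) \xrightarrow{\matheus{K}(x,-)} \matheus{K}(K,FD_i-)$. For $\rho_i$ this holds because $\rho_i$ is defined to correspond to $F\gamma_i$, and $\matheus{K}(K,\rho_i)$ is precomposition by $x$. For $\sigma_i$, it corresponds to $\gamma_i$ viewed as $W_i \to [\matheus{S},\V](\yo s_i, \yo D_i^\op -)$. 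Precomposing with $\sigma_i$ then sends $\alpha\colon \yo(s_i)\to G$ to $\alpha\cdot\sigma_i$, whose transpose is $W_i\xrightarrow{\gamma_i}\matheus{S}(s_i,D_i-)\xrightarrow{\alpha}G D_i$. By Yoneda, $\alpha$ is the map $G$ applied to the element $x = \alpha_{s_i}(1_{s_i})$, which is exactly the composite above. Since these are equalities of morphisms in $\V_0$ (not merely on global elements), I would carry out this computation representably, or via naturality of the enriched Yoneda isomorphism. I expect this bookkeeping with enriched Yoneda and the transposes to be the main obstacle, though it is routine.

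For the consequence, I use the tensor adjunction in $[\matheus{S},\matheus{K}]$: for $P\colon\matheus{S}\to\V$, the tensor $P\otimes K$ is the functor $s \mapsto P(s)\otimes K$. This uses copowers in $\matheus{K}$, which I assume exist; otherwise I would interpret $P\otimes K$ as a weighted colimit existing in $[\matheus{S},\matheus{K}]$. It satisfies $[\matheus{S},\matheus{K}](P\otimes K, F)\cong [\matheus{S},\V](P, \matheus{K}(K,F-))$, naturally in $P$. Hence $[\matheus{S},\matheus{K}](\sigma_i\otimes K, F)$ is isomorphic, as an arrow, to $[\matheus{S},\V](\sigma_i,\matheus{K}(K,F-))$. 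Closure of $\calR$ under isomorphisms, together with the first part, finishes the argument.
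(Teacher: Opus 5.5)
Your proposal is correct and takes the same route as the paper. Both identify $[\matheus{S},\V](\sigma_i,\matheus{K}(K,F-))$ with $\matheus{K}(K,\rho_i)$ up to the Yoneda, weighted-limit and weighted-colimit isomorphisms, and then use $[\matheus{S},\matheus{K}](P\otimes K,F)\cong[\matheus{S},\V](P,\matheus{K}(K,F-))$ for the second part. Two parts of your write-up go beyond the paper and are welcome: your explicit check that these isomorphisms actually carry one map to the other (the paper only asserts it), and your observation that the tensors in $\matheus{K}$ are an extra hypothesis beyond completeness (the paper's proof also relies on them).
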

	
	\begin{proof}
		Note that the pre-composition $[\matheus{S}, \V](\sigma_i, \matheus{K}(K, F-))$ by $\sigma_i$ is given by the composite
		\begin{align*}
			[\matheus{S}, \V](\yo(s_i), \matheus{K}(K, F-)) &\cong \matheus{K}(K, F(s_i)) & \text{(by Yoneda's Lemma)}			\\
			& \xrightarrow{\matheus{K}(K, \rho_i)} \matheus{K}(K, \{W_i, FD_i\}) & 
			\\
			&\cong \{W_i, \matheus{K}(K, FD_i)\} & \text{(as representables preserve limits)}
			\\
			&\cong \{W_i, [\matheus{S}, \V](\yo D_i^\op, \matheus{K}(K, F-))\} &\text{(by Yoneda's Lemma)}
			\\
			&\cong [\matheus{S}, \V](W_i * \yo D_i^\op, \matheus{K}(K, F-)), &\text{(by the universal property of weighted colimits)}
		\end{align*}
		which belongs to $\calR$ precisely if $\matheus{K}(K, \rho_i)$ is in $\calR$, i.e., $F$ is a $\V$-model with respect to $\calR$.
		
		
		Since $\matheus{K}$ admits tensors by $\V$, altogether, the pre-composition $[\matheus{S}, \matheus{K}](\sigma_i \otimes K, F)$ 
		$$[\matheus{S}, \matheus{K}](\matheus{S}(-, s_i) \otimes K, F) \to [\matheus{S}, \matheus{K}]((W_i * \yo D_i^\op) \otimes K, F)$$
		by $\sigma_i \otimes K$
		belongs to $\calR$ if and only if $F$ is a $\V$-model with respect to $\calR$.
	\end{proof}

	\section{Reflectivity from orthogonality}
	\label{sec:reflect}
	
	\subsection{Adjunctions with respect to a class}
	\label{sec:adj_wrt_R}
	
	The Orthogonal Sub-category Theorem is about the reflectivity of orthogonal objects. Since we have defined orthogonal objects and morphisms with lifting properties with respect to a class of morphisms, it is natural to introduce a notion of adjunctions with respect to a class.
	
	\begin{defi}
		\label{def:generalised_adj}
		Let $\matheus{C}, \matheus{D}$ be $\V$-categories, and $U \colon \matheus{D} \to \matheus{C}$ be a $\V$-functor. A functor $F \colon \matheus{C}_0 \to \matheus{D}_0$ between the underlying categories is said to be an \emph{unenriched left adjoint of $U$ with respect to $\mathccal{R}$} (or, equivalently, $U$ is a \emph{unenriched right adjoint of $F$ with respect to $\mathccal{R}$}) if and only if there is a $\ob \matheus{C}$-indexed collection of maps
		$$
		\{c \xrightarrow{\eta_c} UFc\}_{c \in \ob \matheus{C}}
		$$
		in $\V_0$ satisfying unenriched naturality, such that for any object $c$ of $\matheus{C}$ and any object $d$ of $\matheus{D}$, the composite
		\begin{equation}
			\label{eqt:adj}
			\matheus{D}(Fc, d) \xrightarrow{U_{Fc, d}} \matheus{C}(UFc, Ud) \xrightarrow{\matheus{C}(\eta_c, Ud)}\matheus{C}(c, Ud)
		\end{equation}
		belongs to $\mathccal{R}$.
		
		We call such a pair of $\V$-functors an \emph{unenriched adjunction with respect to $\mathccal{R}$}.
	\end{defi}
	
	\begin{rk}
		\label{rk:natural}
		According to \cite[1.8(b), (e)]{book:Kelly:1982}, the composite \longref{}{eqt:adj} is natural in $d$.
	\end{rk}
	
	\begin{example}
		When $\V = \Set$, the category of sets, an ordinary adjunction between categories is clearly an unenriched adjunction with respect to the class of bijections.
		
		Indeed, any enriched adjunction is an unenriched adjunction with respect to the class of isomorphisms in $\V$.
	\end{example}
	
	\begin{example}[Projective covers as an  adjunction with respect to surjections]
		Let $\V = \Set$, $\calR$ be the class of surjections, $C$ be a category with enough projectives, and $D = C^{\twoheadrightarrow}$ be the full sub-category of the arrow category of $C$ determined by the epimorphisms. Let
		\begin{align*}
			C^{\twoheadrightarrow} &\xrightarrow{U} C
			\\
			d_1 \twoheadrightarrow d_2 &\mapsto d_2
		\end{align*}
		be the codomain projection, and let
		\begin{align*}
			C &\xrightarrow{F} C^{\twoheadrightarrow}
			\\
			c &\mapsto p \twoheadrightarrow c
		\end{align*}
		be a functor that sends each $c \in \ob C$ to a projective cover of $c$.
		
		Then, for any morphism $c \to d_2$ in $C$, and projective covers $p \twoheadrightarrow c$ and $d_1 \twoheadrightarrow d_2$, there exists a morphism $p \to d_1$ such that
		\[
		\begin{tikzcd}[ampersand replacement=\&]
			p \& c \\
			{d_1} \& {d_2}
			\arrow[two heads, from=1-1, to=1-2]
			\arrow[dashed, from=1-1, to=2-1]
			\arrow[from=1-2, to=2-2]
			\arrow[two heads, from=2-1, to=2-2]
		\end{tikzcd}
		\]
		commutes, by the projective nature of $p$. In other words, by taking $\eta_c$ to be the identity for all $c \in \ob C$, $F$ is a left adjoint of $U$ with respect to $\calR$.
	\end{example}
	
	%
		%
	%
	
	\begin{rk}
		Our notion of unenriched adjunctions with respect to $\calR$ is similar to the notion of weak adjoints introduced in \cite{LR:2012}. We require further that the collection of maps $\{\eta_c\}$ to satisfy at least unenriched naturality, whereas in \cite{LR:2012}, a weak adjunction requires only their existence.
	\end{rk}

	\subsection{The enriched Orthogonal Sub-category Theorems}
	\label{sec:OSP}
	
	We establish an enriched analogue of the Orthogonal Sub-category Theorem. This result will be used in showing the reflectivity of models of enhanced limit $2$-sketches in the functor enhanced $2$-category.
	
	To prove the theorem, we make use of the enriched small object argument developed in \cite{Jurka:2025}.
	
	To begin with, let us assume that $\V_0$ is equipped with a weak factorisation system $(\mathccal{L}, \calR)$ which is cofibrantly generated by a set $\G$ of morphisms, where the left class $\mathccal{L}$ is corner-stable as in \cite[Definition 5.4]{Jurka:2025}. Let $\matheus{K}$ be a $\V$-category which has tensors by $\V$, pushouts, transfinite composites, and a terminal object. Let $\mathccal{I}$ be a set of morphisms in $\matheus{K}_0$, satisfying the smallness condition in \cite[Definition 8.1]{Jurka:2025}. In other words, we assume every condition required in the enriched small object argument.
	
	\begin{pro}[General Enriched Orthogonal Sub-category Theorem]
		\label{pro:general_EOST}
		The inclusion
		$$\mathccal{I}^{\perp^\calR} \xhookrightarrow{I} \matheus{K}$$
		is an unenriched right adjoint with respect to $\calR$.
	\end{pro}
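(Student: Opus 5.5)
The plan is to apply the enriched small object argument of \cite{Jurka:2025} to the set $\mathccal{I}$ and use it to construct, for each object $K$ of $\matheus{K}$, a morphism $\eta_K \colon K \to RK$ with $RK$ in $\mathccal{I}^{\perp^\calR}$. Consider the unique map $!_K \colon K \to 1$ to the terminal object. The small object argument, which is available under our standing hypotheses ($\mathccal{L}$ corner-stable, $(\mathccal{L},\calR)$ cofibrantly generated by $\G$, tensors, pushouts, transfinite composites, and smallness of $\mathccal{I}$), factors $!_K$ as
$$K \xrightarrow{\eta_K} RK \xrightarrow{!_{RK}} 1.$$
Here $!_{RK}$ lies in $\mathccal{I}^{\pitchfork^\calR}$, and $\eta_K$ lies in the enriched left class ${}^{\pitchfork^\calR}(\mathccal{I}^{\pitchfork^\calR})$. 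The construction is functorial, so $K \mapsto RK$ extends to a functor $R \colon \matheus{K}_0 \to (\mathccal{I}^{\perp^\calR})_0$, and the maps $\eta_K$ are natural in the unenriched sense. By \longref{Lemma}{lem:bridge}, the condition $!_{RK} \in \mathccal{I}^{\pitchfork^\calR}$ says exactly that $RK \in \mathccal{I}^{\perp^\calR}$.

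Next we verify the condition of \longref{Definition}{def:generalised_adj}. Since $I$ is fully faithful, the composite \eqref{eqt:adj} reduces to
$$\matheus{K}(\eta_K, L) \colon \matheus{K}(RK, L) \to \matheus{K}(K, L)$$
for each $L \in \mathccal{I}^{\perp^\calR}$. By \longref{Lemma}{lem:bridge}, $!_L \in \mathccal{I}^{\pitchfork^\calR}$. Because $\eta_K$ is in ${}^{\pitchfork^\calR}(\mathccal{I}^{\pitchfork^\calR})$, we get $\eta_K \pitchfork^\calR !_L$, that is, $\langle \eta_K, !_L\rangle \in \calR$. Running the same pullback computation as in the proof of \longref{Lemma}{lem:bridge}, with $\matheus{K}(RK,1)\cong 1\cong\matheus{K}(K,1)$, identifies $\mathrm{Sq}(\eta_K, !_L)$ with $\matheus{K}(K,L)$ via an isomorphism $f$ satisfying $f\cdot\langle \eta_K, !_L\rangle = \matheus{K}(\eta_K, L)$. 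Since $\calR$ is closed under isomorphisms and composition, $\matheus{K}(\eta_K, L)\in\calR$, which is what we need.

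The main obstacle is the middle step: we must make sure the small object argument really produces $\eta_K$ in the enriched left class, meaning left lifting with respect to $\calR$ in the sense of \longref{Definition}{def:lifting}, and not just in some unenriched sense. Here I would rely on the main result of \cite{Jurka:2025}. The cells used there are pushouts of the enriched generating maps, which are tensors $G\otimes m$ composed with corner maps for $G\in\G$ and $m\in\mathccal{I}$. Corner-stability of $\mathccal{L}$ is what guarantees that these cells, and their pushouts and transfinite composites, remain in ${}^{\pitchfork^\calR}(\mathccal{I}^{\pitchfork^\calR})$. I would cite that statement directly rather than reprove the closure properties. Functoriality of the factorisation, which we need for the unenriched naturality of $\eta$, also comes from the argument being built from canonical colimits.
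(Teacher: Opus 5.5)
Your proposal is correct and takes essentially the same route as the paper: factor $!_K$ using the enriched small object argument of \cite[Theorem 8.5]{Jurka:2025}, take the left factor as $\eta_K$, and use full faithfulness of $I$ together with \longref{Lemma}{lem:bridge} to reduce the required condition to $\langle \eta_K, !_{IX}\rangle \in \calR$. If anything, you are slightly more explicit than the paper: you check via \longref{Lemma}{lem:bridge} that $RK$ actually lies in $\mathccal{I}^{\perp^\calR}$, and you identify $\mathrm{Sq}(\eta_K, !_L)$ with $\matheus{K}(K,L)$ only up to isomorphism, where the paper writes an equality.
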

	
	\begin{proof}
		We show that there exists a functor
		$$\matheus{K}_0 \xrightarrow{L} \mathccal{I}^{\perp^\calR}_0,$$
		together with an $\ob \matheus{K}$-indexed collection of $\V$-natural maps
		$$K \xrightarrow{\eta_K} ILK$$
		in $\matheus{K_0}$, such that for any object $K$ of $\matheus{K}$ and $X$ of ${\mathccal{I}^{\perp^\calR}}$, the composite
		\begin{equation*}
			{\mathccal{I}^{\perp^\calR}}(LK, X) \xrightarrow{I_{LK, X}} \matheus{K}(ILK, IX) \xrightarrow{\matheus{K}(\eta_K, IX)}\matheus{K}(K, IX)
		\end{equation*}
		belongs to $\calR$.
		
		Following the proof of \cite[Theorem 8.5]{Jurka:2025}, for any object $K$ of $\matheus{K}$, we can factorise the unique map $!_K \colon K \to 1$ to the terminal object as
		\begin{equation}
			\label{diag:factorise}
			\begin{tikzcd}[ampersand replacement=\&]
				K \&\& 1 \\
				\& {\colim_{\alpha < \lambda} D^K(\alpha)}
				\arrow["{!_K}", from=1-1, to=1-3]
				\arrow["{e_K}"', from=1-1, to=2-2]
				\arrow["{!_{\colim_{\alpha < \lambda} D^K(\alpha)}}"', from=2-2, to=1-3]
			\end{tikzcd}
		\end{equation}
		through the conical colimit $\colim_{\alpha < \lambda} D^K(\alpha)$ of the diagram $D^K \colon \lambda \to \matheus{K}$ with $D^K(0) := K$; the map $e_K$ is the colimit injection $D^K(0) \to \colim_{\alpha < \lambda} D^K(\alpha)$, and that $\lambda$ is a suitable regular cardinal.
		
		We then define a functor
		\begin{align}
			\label{eqt:L}
			L \colon \matheus{K}_0 &\to {\mathccal{I}^{\perp^\calR}_0}
			\\
			K_1 &\mapsto \colim_{\alpha < \lambda} D^{K_1}(\alpha) \nonumber
			\\
			k \downarrow \; &\mapsto \quad \quad \quad \downarrow Lk \nonumber
			\\
			K_2 &\mapsto \colim_{\alpha < \lambda} D^{K_2}(\alpha), \nonumber
		\end{align}
		where $Lk \colon \colim_{\alpha < \lambda} D^{K_1}(\alpha) \to \colim_{\alpha < \lambda} D^{K_2}(\alpha)$ is a morphism in $\matheus{K}_0$ obtained via the universal property.
		
		Besides, for any object $K$ of $\matheus{K}$, we define
		$K \xrightarrow{\eta_K} ILK$ as the map $e_K \colon K \to \colim_{\alpha < \lambda} D^{K}(\alpha)$ in \longref{Diagram}{diag:factorise}; the unenriched naturality of $\{\eta_K\}_{K \in \ob\matheus{K}}$ is guaranteed by the universal property of $\V$-weighted colimits.
		
		Finally, since the inclusion $I$ is fully faithful, it suffices to verify that the pre-composition
		$$\matheus{K}(ILK, IX) \xrightarrow{\matheus{K}(\eta_K, IX)} \matheus{K}(K, IX)$$
		by $\eta_K$ belongs to $\calR$. In fact, as in \longref{Diagram}{diag:gap_map}, we have $\matheus{K}(\eta_K, IX) = \langle \eta_K, !_{IX} \rangle$, hence, it remains to check if $\langle \eta_K, !_{IX} \rangle$ is in $\calR$.
		
		Note, by \cite[Theorem 8.5]{Jurka:2025}, that $\eta_K$ is in $^{\pitchfork^\matheus{K}}{(\mathccal{I}^{\pitchfork^\matheus{K}})}$. Besides, according to \longref{Lemma}{lem:bridge}, $!_{IX}$ belongs to $\mathccal{I}^{\pitchfork^\calK}$. Altogether, we see that $\eta_K$ and $!_{IX}$ have the lifthing properties against each other with respect to $\calR$, therefore, we conclude that $\langle \eta_K, !_{IX} \rangle$ belongs to $\calR$.
	\end{proof}
	
	\begin{example}
		Clearly, the classical Orthogonal Sub-category Theorem is an example.
	\end{example}
	
	\begin{example}[The relative Stone-Čech compactification]
		Consider when $\V = \Set$, where $\calR$ consists of precisely the bijections, and let $\matheus{K}$ be the slice category $\gps{B}$ over a topological space $B$.
		
		We set $\mathccal{I}$ to be the class
		$$\{Y \xrightarrow{\eta_Y} \rscc{B}Y \colon Y \in \gps{B}\}$$
		of continuous maps, where $\rscc{B}$ denotes the relative Stone-Čech compactification in \cite[Definition 4.4]{KM:2024}, and each $\eta_Y$ refers to the inclusion of $Y$ into its relative Stone-Čech compactification.
		
		Then, $\mathccal{I}^{\perp^\calR}$ is exactly the category $\pgps{B}$ of topological spaces where the anchor maps to $B$ are proper. For an an object $Z \in \pgps{B}$, by the universal property of the relative Stone-Čech compactification, for any $Y \in \gps{B}$ and any continuous map $Y \to Z$, there is a unique map $f \colon \rscc{B}Y \to Z$ such that the triangle
		\[
		\begin{tikzcd}[ampersand replacement=\&]
			Y \&\& Z \\
			\& {\rscc{B}Y}
			\arrow[from=1-1, to=1-3]
			\arrow["{\eta_Y}"', from=1-1, to=2-2]
			\arrow["{\exists! f}"', dashed, from=2-2, to=1-3]
		\end{tikzcd}
		\]
		commutes. This means that
		$$\gps{B}(\rscc{B}Y, Z) \xrightarrow{\gps{B}(\eta_Y, Z)} \gps{B}(Y, Z)$$
		is a bijection, and so $Z \in \mathccal{I}^{\perp^\calR}$. Conversely, suppose $Z \in \mathccal{I}^{\perp^\calR}$, which means the pre-composition by $\eta_Y$ above is a bijection, and hence the identity on $Z$ can be factorised along $\eta_Z \colon Z \to \rscc{B}Z$; consequently,  we have $Z \cong \rscc{B}Z$ in $\pgps{B}$.
		
		We recover the fact that $\pgps{B}$ is a reflective sub-category of $\gps{B}$. By choosing $B = 1$, we recover the fact that compact Hausdorff spaces are reflective in topological spaces.
	\end{example}
	
	\begin{rk}
		In general, the functor $L$ may not be enriched. See \cite[\S1]{Vokrinek:2015} for a relevant discussion.
	\end{rk}
	
	Under special conditions, it is possible to upgrade the functor $L$ constructed in the proof of \longref{Proposition}{pro:general_EOST} to an enriched $\V$-functor. This would further require the map
	$$\matheus{K}(\eta_{K_2}, LK_2) \colon \mathccal{I}^{\perp^\calR}(LK_1, LK_2) \to \matheus{K}(K_1, LK_2)$$ to be naturally reversible, i.e., there exists a collection of maps in the opposite direction which satisfy $\V$-naturality.
	
	Indeed, for some of the applications, requiring $\calR$ to be precisely the isomorphisms and $\matheus{K}$ to admit powers is sufficient. 
	
	\begin{coroll}[Special Enriched Orthogonal Sub-category Theorem]
		\label{cor:special_EOST}
		If $\calR$ contains precisely the isomorphisms in $\V$, and $\matheus{K}$ has powers by $\V$, then we obtain a $\V$-functor $L$ such that $L \dashv I$, where I is the inclusion in \longref{Proposition}{pro:general_EOST}.
		
		As a result, $\mathccal{I}^\perp$ is a reflective sub-$\V$-category of $\matheus{K}$.
	\end{coroll}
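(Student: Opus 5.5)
We start from Proposition~\ref{pro:general_EOST} with $\calR$ the class of isomorphisms. It gives a functor $L \colon \matheus{K}_0 \to \mathccal{I}^{\perp}_0$ and maps $\eta_K \colon K \to ILK$, natural in the unenriched sense. For every $K$ in $\matheus{K}$ and $X$ in $\mathccal{I}^\perp$, the composite
$$\mathccal{I}^{\perp}(LK, X) \xrightarrow{I_{LK,X}} \matheus{K}(ILK, IX) \xrightarrow{\matheus{K}(\eta_K, IX)} \matheus{K}(K, IX)$$
is then an isomorphism in $\V$. By Remark~\ref{rk:natural} this composite is $\V$-natural in $X$. So for each $K$ the $\V$-functor $\matheus{K}(K, I-) \colon \mathccal{I}^\perp \to \V$ is represented by $LK$, with unit $\eta_K$.

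The plan is to use the standard enriched result that pointwise representability yields an adjoint \cite[\S1.10--1.11]{book:Kelly:1982}. If every $\matheus{K}(K, I-)$ is representable, the assignment $K \mapsto LK$ extends uniquely to a $\V$-functor $L' \colon \matheus{K} \to \mathccal{I}^\perp$ with $L' \dashv I$. Its hom-maps $L'_{K_1,K_2}$ are the mates of
$$\matheus{K}(K_1, K_2) \xrightarrow{\matheus{K}(K_1, \eta_{K_2})} \matheus{K}(K_1, ILK_2) \cong \mathccal{I}^\perp(LK_1, LK_2),$$
where the isomorphism is the inverse of the representing isomorphism for $K_1$ at $X = LK_2$. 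These maps are precisely the "naturally reversible" maps mentioned before the statement. Functoriality of $L'$ and $\V$-naturality of the isomorphism in $K$ follow from the Yoneda lemma.

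We must also check that the underlying functor of $L'$ agrees with the $L$ built in \eqref{eqt:L}. On an element $k \colon I \to \matheus{K}(K_1, K_2)$, the formula gives the unique $\ell \colon LK_1 \to LK_2$ with $\ell \eta_{K_1} = \eta_{K_2} k$. The map $Lk$ from the colimit universal property also satisfies this equation, by unenriched naturality of $\eta$. Uniqueness comes from the isomorphism property at $X = LK_2$, so $\ell = Lk$. Reflectivity then follows at once: $I$ is fully faithful, being the inclusion of a full sub-$\V$-category, and it has the left $\V$-adjoint $L'$.

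The main obstacle is where the hypothesis that $\matheus{K}$ has powers is needed. In the above argument, representability of each $\matheus{K}(K, I-)$ already produces the enriched adjoint. The real subtlety is that Proposition~\ref{pro:general_EOST} only verifies the isomorphism on underlying hom-objects via the enriched lifting property, and this uses the orthogonality built into $\mathccal{I}^\perp$. I would use powers to check that $\mathccal{I}^\perp$ is closed under powers $\{V, X\}$ in $\matheus{K}$: we have $\matheus{K}(m, \{V,X\}) \cong [V, \matheus{K}(m, X)]$, which is an isomorphism whenever $\matheus{K}(m, X)$ is. Then the underlying bijection $\mathccal{I}^\perp_0(LK, \{V,X\}) \cong \matheus{K}_0(K, \{V,X\})$ upgrades to an isomorphism $[V, \mathccal{I}^\perp(LK,X)] \cong [V, \matheus{K}(K,IX)]$ natural in $V$. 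By Yoneda in $\V_0$, this gives the enriched isomorphism and confirms the representability used above. This is the step I expect to require the most care.
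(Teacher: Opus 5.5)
Your argument is correct, but it takes a different route from the paper.

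The paper uses only the underlying consequence of \longref{Proposition}{pro:general_EOST}. The composite is a bijection $\mathccal{I}^\perp_0(LK, X) \cong \matheus{K}_0(K, IX)$, natural in $K$ and $X$, which gives an ordinary adjunction between underlying categories. It then invokes \cite[Theorem 4.85]{book:Kelly:1982}: a cotensor-preserving $\V$-functor out of a cotensored $\V$-category whose underlying functor has a left adjoint has a $\V$-left adjoint. The powers hypothesis is used exactly to check that $\mathccal{I}^\perp$ is closed under powers in $\matheus{K}$, via $\matheus{K}(i, Y^V) \cong [V, \matheus{K}(i, Y)]$.

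You instead use the full strength of the proposition. Membership in $\calR$ is a condition on the morphism $\matheus{K}(\eta_K, IX) \circ I_{LK,X}$ of hom-objects in $\V_0$; it comes from $\langle \eta_K, !_{IX}\rangle \in \calR$. So for $\calR$ the isomorphisms you already have a $\V$-natural isomorphism. Each $\matheus{K}(K, I-)$ is therefore representable, and \cite[\S1.10--1.11]{book:Kelly:1982} gives the $\V$-left adjoint directly. This buys you something: the powers hypothesis is superfluous once the proposition is taken as stated. It also avoids relying on the functoriality of $L$ in \eqref{eqt:L}, which you recover afterwards as a compatibility check. The paper's route is the one you would need if only the unenriched adjunction were available.

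Your closing paragraph is therefore unnecessary. Its premise, that the proposition ``only verifies the isomorphism on underlying hom-objects'', contradicts your own first paragraph. The fallback you sketch there (closure under powers plus Yoneda in $\V_0$) is precisely the content of the paper's appeal to Kelly's Theorem 4.85.
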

	
	\begin{proof}
		The map
		\begin{equation*}
			{\mathccal{I}^{\perp^\calR}_0}(LK, X) \xrightarrow{I_{LK, X}} \matheus{K}_0(ILK, IX) \xrightarrow{\matheus{K}_0(\eta_K, IX)}\matheus{K}_0(K, IX)
		\end{equation*}
		is a bijection natural in both $K$ and $X$, according to \longref{Remark}{rk:natural}, so we have an ordinary adjunction between the underlying categories. According to \cite[Theorem 4.85]{book:Kelly:1982}, it suffices to check that the inclusion $I$ preserves powers; in other words, we shall verify that for any object $V$ of $\V$, $Y$ of $\mathccal{I}^\perp$, and any morphism $i \colon I_1 \to I_2$ in $\mathccal{I}$, the canonical map
		$$\matheus{K}(I_2, Y^V) \xrightarrow{\matheus{K}(i, T^V)} \matheus{K}(I_1, Y^V)$$
		is an isomorphism.
		
		Indeed, by the universal property of powers, we know that $\matheus{K}(I_2, Y^V) \cong \V(V, \matheus{K}(I_2, Y))$ and $\matheus{K}(I_1, Y^V) \cong \V(V, \matheus{K}(I_1, Y))$, therefore, it remains to prove that the canonical map
		$$\V(V, \matheus{K}(I_2, Y)) \xrightarrow{\V(V, \matheus{K}(i, Y))} \V(V, \matheus{K}(I_1, Y))$$
		is an isomorphism. Now, since $\matheus{K}(i, Y)$ is an isomorphism because $Y \in \mathccal{I}^\perp$, the result immediately follows.
	\end{proof}
	
	For two such reflective sub-categories, there is a nice interaction between them.
	
	\begin{lemma}
		\label{lem:reflector_interact}
		Let $\matheus{K}^1$ and $\matheus{K}^2$ be $\V$-categories with tensors and powers by $\V$, pushouts, transfinite composites, and a terminal object. Let $\mathccal{I}_i$ be a set of morphisms in $\matheus{K}^i_0$, for $i = 1, 2$, satisfying the smallness condition. Suppose $U_\mathccal{I} \colon \mathccal{I}^\perp_1 \to \mathccal{I}^\perp_2$ and $U_\matheus{K} \colon \matheus{K}^1 \to \matheus{K}^2$ are $\V$-functors such that
		\[
		\begin{tikzcd}[ampersand replacement=\&]
			{\mathccal{I}^\perp_1} \& {\matheus{K}^1} \\
			{\mathccal{I}^\perp_2} \& {\matheus{K}^2}
			\arrow[hook, from=1-1, to=1-2]
			\arrow["{U_\mathccal{I}}"', from=1-1, to=2-1]
			\arrow["{U_\matheus{K}}", from=1-2, to=2-2]
			\arrow[hook, from=2-1, to=2-2]
		\end{tikzcd}
		\]
		commutes, and $U_\matheus{K}$ is a left adjoint. Then, for any object $G \in \ob\matheus{K}^1$, we have
		$$U_\mathccal{I}  L_1(G) \cong L_2 U_\matheus{K}(G),$$
		where $L_i$ denotes the reflector $\matheus{K}^i \to {\mathccal{I}^\perp_i}$, for $i = 1, 2$.
	\end{lemma}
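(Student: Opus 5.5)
The plan is to compare the two objects by the Yoneda lemma inside $\mathccal{I}^\perp_2$. Both $U_\mathccal{I}L_1(G)$ and $L_2U_\matheus{K}(G)$ lie in $\mathccal{I}^\perp_2$: the first because $U_\mathccal{I}$ lands there, the second by construction. So it suffices to produce a map $U_\matheus{K}(G)\to U_\mathccal{I}L_1(G)$ in $\matheus{K}^2$ that exhibits $U_\mathccal{I}L_1(G)$ as a reflection of $U_\matheus{K}(G)$ into $\mathccal{I}^\perp_2$. Uniqueness of reflections then gives the isomorphism.

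The candidate map comes from the commutative square. Apply $U_\matheus{K}$ to the unit $\eta^1_G\colon G\to I_1L_1G$ of the reflection from \longref{Corollary}{cor:special_EOST}. Since $U_\matheus{K}I_1=I_2U_\mathccal{I}$, this gives
$$U_\matheus{K}(\eta^1_G)\colon U_\matheus{K}G\to U_\matheus{K}I_1L_1G = I_2U_\mathccal{I}L_1G.$$
By \longref{Corollary}{cor:special_EOST} applied to $\matheus{K}^2$, this map is a reflection precisely when, for every $X\in\mathccal{I}^\perp_2$, the precomposition
$$\matheus{K}^2(I_2U_\mathccal{I}L_1G, X)\xrightarrow{\matheus{K}^2(U_\matheus{K}\eta^1_G, X)}\matheus{K}^2(U_\matheus{K}G, X)$$
is an isomorphism in $\V$.

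Let $R$ be the right adjoint of $U_\matheus{K}$. Transposing along $U_\matheus{K}\dashv R$ turns this map into
$$\matheus{K}^1(I_1L_1G, RX)\xrightarrow{\matheus{K}^1(\eta^1_G, RX)}\matheus{K}^1(G, RX).$$
This is an isomorphism as soon as $RX\in\mathccal{I}^\perp_1$, by the universal property of the reflection $L_1\dashv I_1$.

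The key step, and the main obstacle, is therefore to show that $R$ carries $\mathccal{I}^\perp_2$ into $\mathccal{I}^\perp_1$. By adjunction, $\matheus{K}^1(i,RX)\cong\matheus{K}^2(U_\matheus{K}i,X)$ for $i\in\mathccal{I}_1$. So this amounts to showing that $U_\matheus{K}$ sends each $i\in\mathccal{I}_1$ to a map that every object of $\mathccal{I}^\perp_2$ is orthogonal to. The first route I would try is to use that $U_\matheus{K}$, being a left adjoint $\V$-functor, preserves tensors, pushouts and transfinite composites. Then $U_\matheus{K}$ sends the small-object construction $D^G$ of \longref{Proposition}{pro:general_EOST} to a transfinite cell complex in $\matheus{K}^2$. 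Each stage of this complex is obtained by pushing out tensors of the maps $U_\matheus{K}(i)$ along their gap maps. If each $U_\matheus{K}(i)$ is in the left orthogonal class of $\mathccal{I}^\perp_2$, these pushouts and composites remain inverted by $L_2$. In the intended application I expect this to be checked directly on generators: $U_\matheus{K}$ should be a left Kan extension or restriction sending $\sigma_i\otimes K$ to a map of the same form $\sigma_j\otimes K'$, as in \longref{Lemma}{lem:model_orthogonal}. If the hypotheses as stated do not force this, I would record it as the implicit compatibility condition that makes the square commute on orthogonal subcategories.

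Granting this, $U_\mathccal{I}L_1G$ satisfies the universal property of $L_2U_\matheus{K}G$. The canonical comparison $L_2U_\matheus{K}G\to U_\mathccal{I}L_1G$, namely the transpose of $U_\matheus{K}\eta^1_G$, is then an isomorphism. It is $\V$-natural in $G$ by naturality of the units.
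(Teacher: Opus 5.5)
The transposition argument you give is correct. The step you flag as the main obstacle is a genuine gap, but it cannot be closed from the stated hypotheses, because the lemma as stated is false. Take $\V=\Set$, $\matheus{K}^1=\matheus{K}^2=\Set$, $U_{\matheus{K}}=\mathrm{id}$, $\mathccal{I}_1=\{2\to 1\}$ and $\mathccal{I}_2=\emptyset$. Then $\mathccal{I}_1^{\perp}$ consists of the sets with at most one element, and $\mathccal{I}_2^{\perp}=\Set$. The square commutes with $U_{\mathccal{I}}$ the inclusion, and $U_{\matheus{K}}$ is a left adjoint. Yet for $G=2$ we get $U_{\mathccal{I}}L_1(G)=1$ and $L_2U_{\matheus{K}}(G)=2$.

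The condition you need, $R(\mathccal{I}_2^{\perp})\subseteq\mathccal{I}_1^{\perp}$, is a condition on the right adjoint. It does not follow from commutativity of the square, which only gives $U_{\matheus{K}}(\mathccal{I}_1^{\perp})\subseteq\mathccal{I}_2^{\perp}$; in the example the square commutes and your condition fails. The condition is also necessary if the canonical comparison is to be invertible. In that case $\matheus{K}^1(\eta^1_G,RX)$ is invertible for every $G$ and every $X\in\mathccal{I}_2^{\perp}$. For $i\colon A\to B$ in $\mathccal{I}_1$, $L_1(i)$ is invertible, so the equation $\eta^1_B\cdot i=I_1L_1(i)\cdot\eta^1_A$ then forces $\matheus{K}^1(i,RX)$ to be invertible. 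So recording the condition as an extra hypothesis is exactly the right repair, and with it your argument is a complete proof.

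The paper's proof has the same hole, in the claim $D^{U_{\matheus{K}}(G)}\cong U_{\matheus{K}}(D^G)$. Colimit preservation only shows that $U_{\matheus{K}}(D^G)$ is obtained by attaching images of cells built from $\mathccal{I}_1$, indexed by maps in $\matheus{K}^1$. By contrast, $D^{U_{\matheus{K}}(G)}$ attaches cells built from $\mathccal{I}_2$, indexed by maps in $\matheus{K}^2$. In the example above the first chain has a point as its colimit, while the second is constant at $G$. Even under your extra hypothesis there is no reason for the chains to agree stage by stage; only their colimits are forced to agree.

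Your ``first route'' is essentially the paper's argument with the missing hypothesis made explicit. Your main route is the better one. It compares the two objects through their universal properties by transposing along $U_{\matheus{K}}\dashv R$, so it never touches the transfinite construction. It also yields the canonical comparison and its naturality, and it isolates precisely the hypothesis that is needed.

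Finally, do not expect the condition to hold automatically in the intended application. There $U_{\matheus{K}}=i^*$ and $R=\mathrm{Ran}_i$, and restriction does not send representables to representables. For example, work with $\V=\matheus{K}=\Set$. Let $\matheus{S}$ be discrete on $a,b$ with no cones, and let $\matheus{T}$ add one morphism $u\colon a\to b$ together with the cone with apex $a$, one-object diagram at $b$ and leg $u$, so that models invert $u$. Then $i$ is bijective on objects and preserves and reflects cones. But $i^*L_{\matheus{T}}(A\xrightarrow{u}B)=(B,B)$, whereas $L_{\matheus{S}}i^*(A\xrightarrow{u}B)=(A,B)$.
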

	
	\begin{proof}
		For any object $X \in \ob \matheus{K}^i$, for $i = 1, 2$, we define
		$$D^X \colon \lambda \to \matheus{K}^i_0$$
		as in the proof of \cite[Theorem 8.5]{Jurka:2025}.
		
		We claim that $D^{U_\matheus{K}(G)} \cong U_\matheus{K}(D^G)$. For, each image of $D^G$ is  a colimit, as constructed in the proof of \cite[Theorem 8.5]{Jurka:2025}, and that $U_\matheus{K}$ is a left adjoint, which preserves colimits.
		
		Now, we show that $U_\matheus{K}(\colim D^G) \cong \colim D^{U_\matheus{K}(G)}$. Suppose $D^{U_\matheus{K}(G)} \to \triangle(X)$ is a cocone, where $X \in \ob\matheus{K}^2$. Then, each component
		$$U_\matheus{K}(D^G(\alpha)) \cong D^{U_\matheus{K}(G)}(\alpha) \to X$$
		in $\matheus{K}^2$ at an ordinal $\alpha < \lambda$ corresponds bijectively to a map
		$$D^G(\alpha) \to U(X)$$
		under the adjunction $U_\matheus{K} \dashv R$. So, we have a cocone $D^G \to \triangle(U(x))$ from the naturality of the adjunction, which, by the universal property of $\colim D^G$, induces a unique map
		$$\colim D^G \to U(X)$$
		in $\matheus{K}^1$ such that the evident diagram commutes. This map then corresponds bijectively to a map
		$$U_\matheus{K}(\colim D^G) \to X$$
		in $\matheus{K}^2$ under the adjunction $U_\matheus{K} \dashv R$, such that the evident diagram commutes. Note that such map must be unique. Consequently, $U_\matheus{K}(\colim D^G)$ is a colimit of the diagram $D^{U_\matheus{K}(G)}$.
		
		Lastly, recall from the proof of \longref{Proposition}{pro:general_EOST} that $L_1(G) := \colim D^G$, and $L_2(U_\matheus{K}(G)) := \colim D^{U_\matheus{K}(G)}$, so $U_\mathccal{I}  L_1(G) \cong L_2 U_\matheus{K}(G)$ as desired.
	\end{proof}
	
	We are now ready to apply our results to the case of $\V$-models of limit $\V$-sketches.
	
	\begin{nota}
		Denote by $\matheus{M}\text{\fontshape{ui}\selectfont od}(\matheus{S}, \matheus{K})$ the full sub-$\V$-category of the functor $\V$-category $[\matheus{S}, \matheus{K}]$ determined by the $\V$-models of $\matheus{S}$ (with respect to the isomorphisms in $\V$). 
	\end{nota}
	
	\begin{theorem}
		\label{thm:reflectivity}
		Let $\V$ be a Bénabou cosmos, where $\V_0$ is equipped with a weak factorisation system  which is cofibrantly generated by a set of morphisms whose (co)domains are locally presentable in the unenriched sense, and that  the left class is corner-stable.
		
		Let $\matheus{S}$ be a limit $\V$-sketch, and $\matheus{K}$ be a locally presentable $\V$-category. The (fully faithful) inclusion
		$$\matheus{M}\text{\fontshape{ui}\selectfont od}(\matheus{S}, \matheus{K}) \hookrightarrow [\matheus{S}, \matheus{K}]$$ 
		of $\V$-models of $\matheus{S}$ in $\matheus{K}$ into $\V$-functors is reflective.
	\end{theorem}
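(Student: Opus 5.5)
The plan is to realise $\V$-models as an orthogonality class and then invoke the Special Enriched Orthogonal Sub-category Theorem (\longref{Corollary}{cor:special_EOST}) with $\calR$ the isomorphisms of $\V$ and the ambient $\V$-category taken to be $[\matheus{S}, \matheus{K}]$.

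\emph{Step 1.} By \longref{Lemma}{lem:model_orthogonal} (with $\calR$ the isomorphisms), $\matheus{M}\text{\fontshape{ui}\selectfont od}(\matheus{S}, \matheus{K})$ is exactly the full sub-$\V$-category of $[\matheus{S}, \matheus{K}]$ orthogonal to the class
$$\{(W_i * \yo D_i^\op) \otimes K \xrightarrow{\sigma_i \otimes K} \yo(s_i) \otimes K\}_{i \in I, K \in \ob\matheus{K}}.$$
This class is indexed by all objects of $\matheus{K}$, so it is not a set. I would replace it by a set. Since $\matheus{K}$ is locally presentable, it has a small strong generator $\mathccal{A}$ of $\lambda$-presentable objects. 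Every $K$ is a $\lambda$-filtered colimit, hence a colimit, of objects of $\mathccal{A}$. The functor $K \mapsto [\matheus{S},\matheus{K}](\yo(s_i)\otimes K, F) \cong \matheus{K}(K, Fs_i)$ turns colimits into limits, and the same holds for the domain side. So orthogonality to $\sigma_i \otimes A$ for all $A \in \mathccal{A}$ implies orthogonality to $\sigma_i\otimes K$ for all $K$, because a limit of isomorphisms is an isomorphism. Alternatively, one can note directly that $\matheus{K}(K,\rho_i)$ is invertible for all $K$ iff it is for $K$ in a dense generator. Either way we get a set $\mathccal{I}$ with $\mathccal{I}^\perp = \matheus{M}\text{\fontshape{ui}\selectfont od}(\matheus{S}, \matheus{K})$.

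\emph{Step 2.} Next I would verify the hypotheses of \longref{Proposition}{pro:general_EOST} and \longref{Corollary}{cor:special_EOST} for $[\matheus{S}, \matheus{K}]$. Since $\matheus{S}$ is small and $\matheus{K}$ is locally presentable, $[\matheus{S}, \matheus{K}]$ is again locally presentable as a $\V$-category. In particular it is complete and cocomplete, so it has tensors and powers by $\V$, pushouts, transfinite composites and a terminal object, all computed pointwise.

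The smallness condition of \cite[Definition 8.1]{Jurka:2025} on $\mathccal{I}$ is the point I expect to be the main obstacle. The domains $(W_i * \yo D_i^\op)\otimes A$ are small colimits of representables tensored with presentable objects, hence presentable in $[\matheus{S},\matheus{K}]$. The smallness condition, however, also involves the generating set $\G$ of the weak factorisation system on $\V_0$. This is where the hypothesis that the (co)domains of $\G$ are presentable is used: tensoring the domains of $\mathccal{I}$ with domains and codomains of $\G$ preserves presentability. Choosing a regular cardinal $\lambda$ large enough for all these finitely many kinds of objects at once, $\mathccal{I}$ is then $\lambda$-small relative to the relevant cell complexes.

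\emph{Step 3.} Corner-stability of the left class is assumed. So \longref{Corollary}{cor:special_EOST} applies and yields a $\V$-functor $L \dashv I$. Hence the fully faithful inclusion $\matheus{M}\text{\fontshape{ui}\selectfont od}(\matheus{S}, \matheus{K}) \hookrightarrow [\matheus{S}, \matheus{K}]$ is reflective.
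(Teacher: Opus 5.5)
This is essentially the paper's argument: you realise models as an orthogonality class via \longref{Lemma}{lem:model_orthogonal}, use local presentability of $[\matheus{S},\matheus{K}]$ for smallness, and apply \longref{Corollary}{cor:special_EOST}. Your Step~1 adds something the paper's proof passes over: it cuts the $\ob\matheus{K}$-indexed class down to a set, which is needed because \longref{Proposition}{pro:general_EOST} requires $\mathccal{I}$ to be a set. That reduction is correct, but it is cleaner via conservativity than via colimits, since with a mere strong generator not every object is a filtered colimit of its members: $\matheus{K}(K,\rho_i)$ is invertible for all $K$ iff $\rho_i$ is invertible iff $\matheus{K}(A,\rho_i)$ is invertible for every $A$ in the strong generator, so $\{\sigma_i\otimes A\}_{i\in I,\,A\in\mathccal{A}}$ already suffices.
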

	
	\begin{proof}
		According to \cite[Example 3.4]{Kelly:1982}, the functor $\V$-category $[\matheus{S}, \matheus{K}]$ is also locally presentable, thus, the (co)domains of the maps in \longref{Lemma}{lem:model_orthogonal} satisfy the smallness condition required for \longref{Corollary}{cor:special_EOST}.
		
		The result follows immediately from \longref{Lemma}{lem:model_orthogonal} and \longref{Corollary}{cor:special_EOST}.
	\end{proof}
	
	\begin{rk}
		Our \longref{Theorem}{thm:reflectivity} is a strengthening of \cite[Theorem 6.11]{book:Kelly:1982} which considers models only in the base of enrichment $\V$.
	\end{rk}
	
	\begin{rk}
		\label{rk:reflectivity}
		\longref{Theorem}{thm:reflectivity} is also a generalisation of \cite[Proposition 6.3.9]{LWP:2024} in some sense as follows.
		
		Let $\matheus{A} \hookrightarrow \matheus{E}$ be a small full dense sub-$\V$-category of a locally presentable $\V$-category. By \cite[Corollary 7.4]{BQR:1998}, $\matheus{E} \simeq {\matheus{M}\text{\fontshape{ui}\selectfont od}}(\matheus{I}, \V)$ for some limit $\V$-sketch $\matheus{I}$. According to \cite[Proposition 6.3.8]{LWP:2024}, if $\matheus{A} \hookrightarrow {\matheus{M}\text{\fontshape{ui}\selectfont od}}(\matheus{I}, \V)$ is a small full (dense) sub-$\V$-category containing the representables, then any $\matheus{A}$-pre-theory can be seen as a limit $\V$-sketch such that their models in the base of enrichment $\V$ coincide.  Restricting our limit $\V$-sketch $\matheus{S}$ to the limit $\V$-sketch associated to a $\matheus{A}$-pre-theory, and setting $\matheus{K} = \V$, we obtain the reflectivity statement in \cite[Proposition 6.3.9]{LWP:2024}.
	\end{rk}
	
	\section{Monadicity}
	\label{sec:monadicity}
	
	In this section, we investigate the monadicity of models of enhanced limit $2$-sketches, which will finally lead us to the fact that the enhanced $2$-category of models with loose transformations is the enhanced $2$-category of algebras over an enhanced $2$-monad.
	
	\subsection{The monadicity of $\V$-models}
	\label{sec:V_models_monadic}
	
	First, we observe that the adjunction in \longref{Theorem}{thm:reflectivity} is monadic under certain conditions.
	
	\begin{defi}
		Let $\matheus{S}$ and $\matheus{T}$ be two limit $\V$-sketches. A morphism $i \colon \matheus{S} \to \matheus{T}$ of limit $\V$-sketches \emph{reflects weighted cones} precisely if for any quadruple $(W, D, s, \gamma)$ in $\matheus{S}$, if its image $(W, i \cdot D, i(s), i \cdot \gamma)$ under $i$ is a weighted cone in $\matheus{T}$, then it is a  weighted cone in $\matheus{S}$.
	\end{defi}
	
	\begin{lemma}
		\label{lem:reflect_cones}
		Let $\matheus{K}$ be a complete $\V$-category. Let $\matheus{S}$ and $\matheus{T}$ be two limit $\V$-sketches, and $i \colon \matheus{S} \to \matheus{T}$ be a cone-reflecting morphism of limit $\V$-sketches. Then, for a $\V$-functor $H \colon \matheus{T} \to \matheus{K}$, $H$ is a $\V$-model with respect to $\calR$, where $\calR$ is a class of morphisms in $\V_0$ closed under isomorphisms and composition.
	\end{lemma}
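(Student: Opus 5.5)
I read the statement (whose conclusion is cut short) as the natural comparison: $H$ is a $\V$-model with respect to $\calR$ if and only if $H \cdot i \colon \matheus{S} \to \matheus{K}$ is one. The reverse implication is meant for those weighted cones of $\matheus{T}$ that arise as images of quadruples in $\matheus{S}$. In the intended application, with $i$ essentially surjective as in the introduction, I expect every cone of $\matheus{T}$ to be of this form up to isomorphism. The whole proof rests on one observation, which I would establish first, and then both directions follow from \longref{Definition}{def:model_wrt_R}.

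The observation concerns a quadruple $(W, D, s, \gamma)$ in $\matheus{S}$. The comparison map
$$\rho^{H i} \colon Hi(s) \to \{W, HiD\}$$
attached to it by the functor $H \cdot i$ coincides with the comparison map $\rho^{H}$ attached by $H$ to the image quadruple $(W, i \cdot D, i(s), i \cdot \gamma)$. Indeed, both are induced, via the universal property of the same weighted limit $\{W, H i D\}$, by the same $\V$-natural transformation
$$W \xrightarrow{\gamma} \matheus{S}(s, D-) \xrightarrow{i} \matheus{T}(i(s), iD-) \xrightarrow{H} \matheus{K}(Hi(s), HiD-).$$
This uses only functoriality of $H$ and $i$ on hom-objects. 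Consequently, for every object $K$ of $\matheus{K}$, the maps $\matheus{K}(K, \rho^{Hi})$ and $\matheus{K}(K, \rho^{H})$ are equal.

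For the forward direction, suppose $H$ is a $\V$-model with respect to $\calR$. Since $i$ is a morphism of limit $\V$-sketches, every cone of $\matheus{S}$ is sent to a cone of $\matheus{T}$. So $\matheus{K}(K, \rho^{H})$ lies in $\calR$, and by the observation so does $\matheus{K}(K, \rho^{Hi})$. Hence $H \cdot i$ is a $\V$-model with respect to $\calR$.

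For the converse, take a cone of $\matheus{T}$ of the form $(W, i\cdot D, i(s), i\cdot\gamma)$. Cone-reflection says that $(W, D, s, \gamma)$ is then a cone of $\matheus{S}$. The hypothesis on $H \cdot i$ puts $\matheus{K}(K, \rho^{Hi})$ in $\calR$, and the observation transfers this to $H$.

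The main obstacle is the last step, for cones of $\matheus{T}$ that are only isomorphic to images. There I would compose the comparison map with the induced isomorphisms of weighted limits and of representables. This is legitimate because $\calR$ is closed under isomorphisms and composition, which is exactly why that hypothesis on $\calR$ is in the statement.
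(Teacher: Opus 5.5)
Your forward direction is correct and is the paper's argument with the details filled in. The paper's proof only says that $i$ preserves weighted cones. Your observation that the comparison map of $H\cdot i$ at $(W,D,s,\gamma)$ literally equals the comparison map of $H$ at $(W, i\cdot D, i(s), i\cdot\gamma)$ is what turns that remark into a proof. This direction uses only that $i$ is a morphism of sketches, not cone-reflection. It is also the only direction the paper uses later, in \longref{Proposition}{pro:V_models_monadic}, to conclude that $i^*(L_{\matheus{T}}(\overline{H}))$ is a $\V$-model.

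The converse is where your plan has a gap. Cone-reflection only constrains cones of $\matheus{T}$ that are images of quadruples of $\matheus{S}$. Essential surjectivity of $i$ does not make every cone of $\matheus{T}$ such an image, even up to isomorphism, because a cone of $\matheus{T}$ may involve morphisms of $\matheus{T}$ outside the image of $i$.

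For a counterexample, take $\V=\Set$ and $\calR$ the bijections. Let $\matheus{T}$ have objects $a,b$, one non-identity arrow $u\colon a\to b$, and a single cone: one-object shape, singleton weight, $D=b$, apex $a$, and $\gamma$ picking $u$. A model of $\matheus{T}$ is then an $H$ with $H(u)$ invertible. Let $\matheus{S}$ be the discrete category on $a,b$ with no cones. The inclusion is bijective on objects. It is vacuously cone-reflecting, since $\matheus{S}(a,b)=\emptyset$ means no quadruple of $\matheus{S}$ is sent onto that cone. Yet $H(a)=\emptyset$, $H(b)=1$ makes $H\cdot i$ a model while $H$ is not.

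So the real obstacle is not cones merely isomorphic to images; your transfer along isomorphisms via the closure properties of $\calR$ handles those correctly. It is cones outside the essential image of $i$, and no argument can handle them. The converse needs the additional hypothesis that every cone of $\matheus{T}$ is, up to isomorphism, the image of a quadruple of $\matheus{S}$. The paper's one-line proof (``any weighted cone \dots comes from a weighted cone \dots by restricting along $i$'') makes the same assumption tacitly. You should state it explicitly rather than expect essential surjectivity to supply it.
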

	
	\begin{proof}
		This amounts to show that any weighted cone in $\matheus{S}$ is sent to a weighted cone in $\matheus{T}$ by $i$, and any weighted cone in $\matheus{S}$ comes from a weighted cone in $\matheus{T}$ by restricting along $i$. Since $i$ preserves and reflects weighed cones, we are done.
	\end{proof}
	
	\begin{pro}
		\label{pro:V_models_monadic}
		Let $\V$ be a category that satisfies all the required conditions in \longref{Theorem}{thm:reflectivity}.
		Let $i \colon \matheus{S} \to \matheus{T}$ be a morphism of limit $\V$-sketches, and $\matheus{K}$ be a locally presentable $\V$-category. There is an adjunction
		\[
		\begin{tikzcd}[ampersand replacement=\&]
			{\matheus{M}\text{\fontshape{ui}\selectfont od}(\matheus{S}, \matheus{K})} \&\& {\matheus{M}\text{\fontshape{ui}\selectfont od}(\matheus{T}, \matheus{K})}
			\arrow[""{name=0, anchor=center, inner sep=0}, "{L\cdot \Lan{i}}", bend left, from=1-1, to=1-3]
			\arrow[""{name=1, anchor=center, inner sep=0}, "{i^*}", bend left, from=1-3, to=1-1]
			\arrow["\dashv"{anchor=center, rotate=-90}, draw=none, from=0, to=1]
		\end{tikzcd},
		\]
		where $L$ denotes the reflector in \longref{Theorem}{thm:reflectivity}, and $\Lan{i}$ is the left Kan extension along $i$.
		
		Furthermore, if $i$ is essentially surjective and reflects weighted cones, then this adjunction is monadic.
	\end{pro}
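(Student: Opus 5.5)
The argument has two parts: first assemble the adjunction from pieces already available, then verify monadicity through Beck's theorem.

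\textbf{The adjunction.} Since $\matheus{K}$ is locally presentable, it is cocomplete. Hence left Kan extension along $i$ exists and gives a $\V$-adjunction $\Lan{i} \dashv i^*$ between $[\matheus{S}, \matheus{K}]$ and $[\matheus{T}, \matheus{K}]$. Next I check that $i^*$ restricts to models. If $H \colon \matheus{T} \to \matheus{K}$ is a $\V$-model and $(W, D, s, \gamma)$ is a weighted cone in $\matheus{S}$, then $(W, iD, i(s), i\gamma)$ is a weighted cone in $\matheus{T}$, because $i$ preserves cones. The comparison map $Hi(s) \to \{W, HiD\}$ is then invertible, so $Hi$ is a model. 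By \longref{Theorem}{thm:reflectivity}, $L$ is left adjoint to the inclusion $\matheus{M}\text{\fontshape{ui}\selectfont od}(\matheus{T}, \matheus{K}) \hookrightarrow [\matheus{T}, \matheus{K}]$. Composing the two adjunctions and restricting along the fully faithful inclusion of $\matheus{S}$-models gives
$$\matheus{M}\text{\fontshape{ui}\selectfont od}(\matheus{T}, \matheus{K})(L\Lan{i}F, H) \cong [\matheus{T}, \matheus{K}](\Lan{i}F, H) \cong [\matheus{S}, \matheus{K}](F, Hi),$$
and the last hom is $\matheus{M}\text{\fontshape{ui}\selectfont od}(\matheus{S}, \matheus{K})(F, i^*H)$.

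\textbf{Monadicity.} I plan to use the enriched Beck monadicity theorem. Since both categories admit tensors, it suffices to show that $i^*$ reflects isomorphisms and creates (or preserves and reflects) coequalisers of $i^*$-split pairs.

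\emph{Conservativity.} A morphism $\alpha \colon H \to H'$ of $\matheus{T}$-models is an isomorphism as soon as each component $\alpha_t$ is. Because $i$ is essentially surjective, every $t$ is isomorphic to some $i(s)$. The component $\alpha_{i(s)}$ is invertible by hypothesis, and naturality transports invertibility along $t \cong i(s)$.

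\emph{Split coequalisers.} Take a pair $f, g \colon H \rightrightarrows H'$ of $\matheus{T}$-models whose image under $i^*$ has a split coequaliser $q \colon H'i \to Q$ in $\matheus{S}$-models. I first form the pointwise coequaliser in $[\matheus{T}, \matheus{K}]$. At objects of the form $i(s)$ it is split, hence absolute. By essential surjectivity every object of $\matheus{T}$ is of this form up to isomorphism, so the coequaliser $\bar Q$ in $[\matheus{T}, \matheus{K}]$ is pointwise absolute and satisfies $\bar Q i \cong Q$. It remains to show that $\bar Q$ is a $\matheus{T}$-model. Let $(W, D', t, \gamma')$ be a cone in $\matheus{T}$. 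Using essential surjectivity I replace $t$ and $D'$ up to isomorphism so that the cone is of the form $(W, iD, i(s), i\gamma)$. Since $i$ is fully faithful on the relevant homs through $\gamma$ (I expect to have to justify this), the datum $\gamma'$ comes from some $\gamma$ in $\matheus{S}$. Cone-reflection then says $(W, D, s, \gamma)$ is a cone of $\matheus{S}$, which is essentially the content of \longref{Lemma}{lem:reflect_cones}. Because $Q$ is an $\matheus{S}$-model, $Q(s) \to \{W, QD\}$ is invertible, and therefore $\bar Q$ sends the original cone to a limit cone. Uniqueness of $\bar Q$ up to isomorphism, together with conservativity, gives creation of these coequalisers.

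\textbf{Main obstacle.} The delicate step is the last one: showing that every cone of $\matheus{T}$ is, up to isomorphism, the image of a quadruple in $\matheus{S}$. That requires lifting the $\V$-natural transformation $\gamma'$ along $i$, and essential surjectivity alone only moves objects, not morphisms. I would first try to extract this from the intended reading of \longref{Lemma}{lem:reflect_cones}, namely that the models of $\matheus{T}$ are exactly those functors whose restriction along $i$ is an $\matheus{S}$-model. If that reading is not enough, I would fall back on a weaker statement: the reflector $L$ preserves the pointwise absolute coequaliser, so $L\bar Q$ is the coequaliser in $\matheus{T}$-models, and $L\bar Q i \cong Q$ would follow from \longref{Lemma}{lem:reflector_interact} applied with $U_\matheus{K} = i^*$, which has the right adjoint $\mathrm{Ran}_i$.
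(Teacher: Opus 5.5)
Your adjunction argument is fine, and it checks that $i^*$ preserves models, which the paper leaves implicit. Conservativity is also fine. The gap is exactly where you put it, and it cannot be closed. To prove that $\bar Q$ is a $\matheus{T}$-model you need every weighted cone of $\matheus{T}$ to come from $\matheus{S}$. Essential surjectivity only moves objects, and cone-reflection as defined only constrains quadruples that already live in $\matheus{S}$, so the ``intended reading'' of \longref{Lemma}{lem:reflect_cones} does not follow.

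Your fallback is precisely the paper's proof. It replaces $\bar Q$ by $L_{\matheus{T}}\bar Q$ and obtains $i^*L_{\matheus{T}}\bar Q\cong L_{\matheus{S}}i^*\bar Q\cong Q$ from \longref{Lemma}{lem:reflector_interact}. That lemma's proof asserts $U_{\matheus{K}}(D^G)\cong D^{U_{\matheus{K}}G}$. This would require $i^*$ to carry the generating maps (and indexing squares) of the small object argument for $\matheus{T}$ to those for $\matheus{S}$, which it does not do. In fact the monadicity claim is false as stated, as the following counterexample shows.
\begin{itemize}
\item Take $\V=\matheus{K}=\Set$, with the factorisation system (all maps, bijections) generated by $\emptyset\to 1$ and $2\to 1$.
\item Let $\matheus{T}$ be the free category on $m\colon R\to X$, with the single cone exhibiting $R$, with both legs $1_R$, as the pullback of $m$ along itself. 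Its models are exactly the monomorphisms.
\item Let $\matheus{S}$ be the discrete category on $\{R,X\}$ with no cones, and let $i$ be the inclusion.
\item $i$ is bijective on objects. It reflects cones vacuously: a functor from the connected cospan shape into a discrete category is constant, so no quadruple of $\matheus{S}$ is sent to the cone of $\matheus{T}$.
\item Here $i^*$ is the forgetful functor from monomorphisms to $\Set\times\Set$, with left adjoint $(A,B)\mapsto(A\hookrightarrow A+B)$. The induced monad $(A,B)\mapsto(A,A+B)$ has all functions $A\to B$ as algebras, so $i^*$ is not monadic.
\end{itemize}
Concretely, consider the two projections from $(\{(a,a),(b,b)\}\hookrightarrow\{x,y\}^2)$ to $(\{a,b\}\hookrightarrow\{x,y\})$, with $a\mapsto x$ and $b\mapsto y$. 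Their image under $i^*$ has split coequaliser $(\{a,b\},1)$. Yet their coequaliser among monomorphisms is $1\hookrightarrow 1$. So $\bar Q=(\{a,b\}\to 1)$ is not a model, and $i^*L_{\matheus{T}}\bar Q=(1,1)\not\cong(\{a,b\},1)=L_{\matheus{S}}i^*\bar Q$. This contradicts \longref{Lemma}{lem:reflector_interact} applied with $U_{\matheus{K}}=i^*$, which has the right adjoint $\mathrm{Ran}_i$.

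What does hold is the statement under an extra hypothesis: every weighted cone of $\matheus{T}$ is the image under $i$ of some quadruple of $\matheus{S}$. Your primary route proves this version directly. Cone-reflection makes that quadruple a cone of $\matheus{S}$, so $H\colon\matheus{T}\to\matheus{K}$ is a model if and only if $Hi$ is. Then $\bar Q i\cong Q$ forces $\bar Q$ itself to be a model, so it is the coequaliser in the full subcategory of models and $i^*$ preserves it. Beck's theorem then applies, with no reflector and no appeal to \longref{Lemma}{lem:reflector_interact}. You should add this hypothesis explicitly rather than try to derive it from essential surjectivity and cone-reflection.
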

	
	\begin{proof}
		We first show that the adjunction exists.
		
		Let $F \colon \matheus{T} \to \matheus{K}$ and $G \colon \matheus{S} \to \matheus{K}$ be two $\V$-models. Then we have
		\begin{align*}
			{\matheus{M}\text{\fontshape{ui}\selectfont od}(\matheus{T}, \matheus{K})}(L\cdot\Lan{i}G, F) &\cong [\matheus{T}, \matheus{K}](\Lan{i}G, F)
			\\
			&\cong [\matheus{S}, \matheus{K}](G, i^*F)
			\\
			&\cong {\matheus{M}\text{\fontshape{ui}\selectfont od}(\matheus{S}, \matheus{K})}(G, i^*F).
		\end{align*}
		
		Next, we show that the adjunction is monadic when $i$ is cone-reflecting.
		
		By \cite[Theorem II.2.1]{book:Dubuc:1970}, our main task is to verify that the $\V$-functor
		$${\matheus{M}\text{\fontshape{ui}\selectfont od}(\matheus{T}, \matheus{K})} \xrightarrow{i^*} {\matheus{M}\text{\fontshape{ui}\selectfont od}(\matheus{S}, \matheus{K})}$$
		creates $i^*$-split coequalisers.
		
		Suppose $F_1, F_2$ are $\V$-models $\matheus{T} \to \matheus{K}$, and that the pair
		\[
		\begin{tikzcd}[ampersand replacement=\&]
			{F_1} \& {F_2}
			\arrow["g"', shift right, from=1-1, to=1-2]
			\arrow["f", shift left, from=1-1, to=1-2]
		\end{tikzcd}
		\]
		consists of two $\V$-natural transformations $f$ and $g$ such that the pair $i^*f$ and $i^*g$ is part of the split coequaliser diagram
		\begin{equation}
			\label{diag:split_coeq}
			\begin{tikzcd}[ampersand replacement=\&]
				{F_1\cdot i} \& {F_2\cdot i} \& H
				\arrow["{g\cdot i}"', shift right, from=1-1, to=1-2]
				\arrow["{f\cdot i}", shift left, from=1-1, to=1-2]
				\arrow[""', curve={height=18pt}, dashed, from=1-2, to=1-1]
				\arrow["h"', dashed, from=1-2, to=1-3]
				\arrow[""', curve={height=12pt}, dashed, from=1-3, to=1-2, in = 30, out = 140]
			\end{tikzcd}
		\end{equation}
		in ${\matheus{M}\text{\fontshape{ui}\selectfont od}(\matheus{S}, \matheus{K})}$. Note that since any split coequaliser is absolute, so \longref{Diagram}{diag:split_coeq} is also a coequaliser diagram in $[\matheus{S}, \matheus{K}]$.
		Since $[\matheus{T}, \matheus{K}]$ is cocomplete, the coequaliser $F_2 \xrightarrow{\overline{h}} \overline{H}$ of $f$ and $g$ exists in $[\matheus{T}, \matheus{K}]$. Since the reflector $L_\matheus{T} \colon [\matheus{T}, \matheus{K}] \to {\matheus{M}\text{\fontshape{ui}\selectfont od}(\matheus{T}, \matheus{K})}$ defined in \longref{Theorem}{thm:reflectivity} is a left adjoint, $L_\matheus{T}(\overline{H})$ is a coequaliser of $f$ and $g$ in ${\matheus{M}\text{\fontshape{ui}\selectfont od}(\matheus{T}, \matheus{K})}$.
		
		Now, by \longref{Lemma}{lem:reflect_cones}, $i^*(L_\matheus{T}(\overline{H}))$ is a $\V$-model in ${\matheus{M}\text{\fontshape{ui}\selectfont od}(\matheus{S}, \matheus{K})}$. We check that $i^*(L_\matheus{T}(\overline{H})) \cong H$.
		
		For any $s \in \ob \matheus{S}$, the evaluation $\V$-functors $\mathrm{ev}_{i(s)} \colon [\matheus{T}, \matheus{K}] \to \matheus{K}$ and $\mathrm{ev}_s \colon [\matheus{S}, \matheus{K}] \to \matheus{K}$ must preserve colimits. So for all $s \in \ob \matheus{S}$, it is true that both $F_2(i(s)) \xrightarrow{\overline{h}_{i(s)}} \overline{H}(i(s))$ and $F_2(i(s)) \xrightarrow{{h_{s}}} {H}(s)$ are the coequaliser of $f_{i(s)}$ and $g_{i(s)}$. Now, as evaluations jointly reflect colimits, we have $\overline{H}\cdot i \cong H$ in $[\matheus{S}, \matheus{K}]$, which implies $L_\matheus{S}(i^*(\overline{H})) \cong L_\matheus{S}(H) \cong H$, where $L_\matheus{S} \colon [\matheus{S}, \matheus{K}] \to {\matheus{M}\text{\fontshape{ui}\selectfont od}(\matheus{S}, \matheus{K})}$ denotes the reflector. Since $i^* \colon [\matheus{T}, \matheus{K}] \to [\matheus{S}, \matheus{K}]$ admits a right adjoint, which is given by the right Kan extension, by \longref{Lemma}{lem:reflector_interact}, we conclude that $i^*(L_\matheus{T}(\overline{H})) \cong L_\matheus{S}(i^*(\overline{H})) \cong H$.
		
		Finally, suppose there is a cocone $F_2 \xrightarrow{} Z$ under $f$ and $g$ in ${\matheus{M}\text{\fontshape{ui}\selectfont od}(\matheus{T}, \matheus{K})}$ whose image under $i^*$ is isomorphic to the underlying cocone in \longref{Diagram}{diag:split_coeq}. Denote by $z \colon L_\matheus{T}(\overline{H}) \to Z$ the unique map in ${\matheus{M}\text{\fontshape{ui}\selectfont od}(\matheus{T}, \matheus{K})} \subseteq [\matheus{T}, \matheus{K}]$  induced by the universal property of $L_\matheus{T}(\overline{H})$. By assumption, $i^*(z) \colon  i^*(L_\matheus{T}(\overline{H})) \xrightarrow{\cong} i^*(Z)$ is an isomorphism in ${\matheus{M}\text{\fontshape{ui}\selectfont od}(\matheus{S}, \matheus{K})} \subseteq[\matheus{S}, \matheus{K}]$. Since $i$ is essentially surjective, so $i^* \colon [\matheus{T}, \matheus{K}] \to [\matheus{S}, \matheus{K}]$ reflects isomorphisms, which means $z \colon L_\matheus{T}(\overline{H}) \xrightarrow{\cong} Z$ is an isomorphism in $[\matheus{T}, \matheus{K}]$; as ${\matheus{M}\text{\fontshape{ui}\selectfont od}(\matheus{T}, \matheus{K})}$ is a full sub-category of $[\matheus{T}, \matheus{K}]$, we conclude that $z$ is an isomorphism in ${\matheus{M}\text{\fontshape{ui}\selectfont od}(\matheus{T}, \matheus{K})}$. So, $Z$ is the coequaliser of $f$ and $g$.
		%
	\end{proof}
	
	\begin{rk}
		Our \longref{Proposition}{pro:V_models_monadic} is a generalisation of \cite[Proposition 27]{BG:2019} for non-concrete models of pre-theories in the base of enrichment $\V$ in some sense as follows.
		
		Following \longref{Remark}{rk:reflectivity}, if $\matheus{A} \hookrightarrow {\matheus{M}\text{\fontshape{ui}\selectfont od}}(\matheus{I}, \V)$ is a small full (dense) sub-$\V$-category containing the representables, then any $\matheus{A}$-pre-theory can be seen as a limit $\V$-sketch such that their models in the base of enrichment $\V$ coincide.  Clearly, morphisms of $\matheus{A}$-pre-theories can then be seen as cone-reflecting morphisms of $\V$-sketches. Restricting our limit $\V$-sketches $\matheus{S}$ and $\matheus{T}$ to limit $\V$-sketches associated to $\matheus{A}$-pre-theories, and setting $\matheus{K} = \V$ in our proposition, we recover the monadicity result in \cite[Proposition 27]{BG:2019} for non-concrete models.
		%
			\end{rk}

			\subsection{Preliminaries on the theory of enhanced limit $2$-sketches}
			\label{sec:prelim}
			
			We move on to apply our results in the above sections to enhanced limit $2$-sketches and their models.
			
			
			We recall some basic notions in enhanced $2$-category theory, which was introduced in \cite{LS:2012} by Lack and Shulman.
			
			\begin{defi}
				Denote by $\F$ the full sub-category of the arrow category 
				$\mathbf{Cat}^\mathbf{2}$ 
				of the $1$-category	$\mathbf{Cat}$ of categories, determined by the fully faithful and injective-on-objects 
				functors, i.e., the \emph{full embeddings}.
			\end{defi}
			
			In other words, an object of $\F$ is a full embedding
			\begin{equation*}
				A_\tau \xhookrightarrow{j_A} A_\lambda,
			\end{equation*}
			a morphism $f$ from $j_A$ to $j_B$ in $\F$ is given by two functors 
			$f_\tau \colon A_\tau \to B_\tau$ and 
			$f_\lambda \colon A_\lambda \to B_\lambda$ 
			making the following square commute
			\begin{center}
				\begin{tikzcd}
					A_\tau \ar[r, hook, "j_A"] \ar[d, "f_\tau"'] & A_\lambda \ar[d, 
					"f_\lambda"]
					\\
					B_\tau \ar[r, hook, "j_B"'] & B_\lambda
				\end{tikzcd}.
			\end{center}
			
			We call $A_\tau$ the \emph{tight} part of $A$, and $A_\lambda$ the 
			\emph{loose} part of 
			$A$; similarly, we apply this terminology to $f$.
			
			\begin{rk}
				$\F$ is (co)complete and Cartesian closed. In particular, it is a Bénabou cosmos.
			\end{rk}
			
			\begin{lemma}
				\label{lem:F}
				The category $\F$ satisfies all the required conditions in \longref{Theorem}{thm:reflectivity}, i.e., $\F_0$ is equipped with a weak factorisation system $(\mathccal{L}, \calR)$ which is cofibrantly generated by a set $\mathccal{G}$ of morphisms whose (co)domains are locally presentable in the unenriched sense, and that the left class $\mathccal{L}$ is corner-stable. Explicitly, the left class $\mathccal{L}$ is given by all morphisms, whereas the right class $\calR$ is given by the isomorphisms.
			\end{lemma}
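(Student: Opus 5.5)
We take the weak factorisation system on $\F_0$ to be $(\mathccal{L}, \calR) = (\text{all morphisms}, \text{isomorphisms})$. We then check the four requirements in order: that it is a weak factorisation system, that it is cofibrantly generated by a small set $\G$ with locally presentable (i.e.\ small) (co)domains, that $\calR$ is exactly the isomorphisms, and that $\mathccal{L}$ is corner-stable.

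For the first point, any morphism $f$ factors as $f = \mathrm{id} \cdot f$ with $f \in \mathccal{L}$ and $\mathrm{id} \in \calR$. Every morphism lifts against every isomorphism, by composing with the inverse. A morphism with the right lifting property against all morphisms is an isomorphism: lifting the square $(\mathrm{id}, \mathrm{id})$ from $g$ against $g$ itself yields an inverse. Dually, the left class consists of all morphisms. Since $\F$ is locally presentable (a full reflective, indeed accessible, subcategory of $\mathbf{Cat}^{\mathbf{2}}$, closed under limits and filtered colimits), every object is presentable. This covers the smallness requirement in whatever unenriched sense \longref{Theorem}{thm:reflectivity} uses.

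For cofibrant generation, we need a set $\G$ with $\G^{\pitchfork} = $ isomorphisms. We would choose a small generating set of $\F$ together with the maps needed to detect bijectivity on objects and morphisms. Concretely, let $\mathbf{1}_\tau$ be the full embedding $\mathbf{1} \to \mathbf{1}$, $\mathbf{1}_\lambda$ be $\emptyset \to \mathbf{1}$, and $\mathbf{2}_\tau, \mathbf{2}_\lambda$ be the analogous objects built from the arrow category $\mathbf{2}$ (for $\mathbf{2}_\lambda$, take the chordate or loose version with tight part the discrete two-object category). We take $\G$ to consist of the maps $\emptyset \to \mathbf{1}_\tau$, $\emptyset \to \mathbf{1}_\lambda$, the boundary inclusions $\mathbf{1}+\mathbf{1} \to \mathbf{2}_{\tau}$ and $\mathbf{1}_\lambda + \mathbf{1}_\lambda \to \mathbf{2}_\lambda$, the fold maps $\mathbf{1}_\lambda + \mathbf{1}_\lambda \to \mathbf{1}_\lambda$ and $\mathbf{1}_\tau+\mathbf{1}_\tau\to\mathbf{1}_\tau$, and the codiagonals $\mathbf{2}_\lambda +_{\partial} \mathbf{2}_\lambda \to \mathbf{2}_\lambda$. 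Unique lifting is not required; only existence of lifts is. Surjectivity of a map $g$ on tight objects, loose objects and loose morphisms comes from the first maps. Injectivity comes from the fold and codiagonal maps, since a lift against a fold map forces two elements with equal images to be equal. So $g \in \G^{\pitchfork}$ is bijective on objects and morphisms in both parts, and hence an isomorphism in $\F$. Conversely every isomorphism lies in $\G^{\pitchfork}$. Hence $\G$ cofibrantly generates (all morphisms, isomorphisms). The expected obstacle is exactly this step: keeping track of the tight and loose parts so that the chosen objects are genuinely full embeddings, and checking that injectivity on loose objects is detected. An alternative is to invoke the general fact that in a locally presentable category the pair (all maps, isos) is cofibrantly generated by a set consisting of a strong generator together with its codiagonals.

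Finally, corner-stability in the sense of \cite[Definition 5.4]{Jurka:2025} asks that for $f, g \in \mathccal{L}$ the corner (pushout-product) map $f \,\hat{\otimes}\, g$ lies in $\mathccal{L}$, possibly together with a similar condition involving $\G$. Since $\mathccal{L}$ is the class of all morphisms, this holds trivially, and the lemma follows.
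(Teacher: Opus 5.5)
Your plan matches the paper's. You factor $f=\mathrm{id}\cdot f$, note that every map lifts against isomorphisms and that lifting against everything forces invertibility, exhibit an explicit finite $\G$ detecting bijectivity on objects and morphisms in both the tight and loose parts, and observe that corner-stability is vacuous because $\mathccal{L}$ is everything. However, the step you yourself flagged as the obstacle is mis-specified, and as written it fails.

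\textbf{The choice of $\mathbf{2}_\lambda$.} Neither option you offer works.
\begin{itemize}
\item The pair $(\{\bullet\ \bullet\}\hookrightarrow\mathbf{2})$ with discrete tight part is not an object of $\F$, because the inclusion is not full: once both endpoints are tight, the arrow between them must be tight.
\item The chordate choice $\mathbf{2}_\lambda=\mathbf{2}_\tau$ does give legitimate maps, but then no generator tests loose arrows between non-tight objects. Take the inclusion of the discrete category, $g\colon(\emptyset\hookrightarrow\{0,1\})\to(\emptyset\hookrightarrow\mathbf{2})$. Every square against a generator whose codomain has a tight object is vacuous, since $(\emptyset\hookrightarrow\mathbf{2})$ has none. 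The two remaining generators only ask that $g_\lambda$ be bijective on objects, which it is. So $g\in\G^{\pitchfork}$, yet $g$ is not an isomorphism.
\end{itemize}

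\textbf{The repair.} Take the literal analogue of $\mathbf{1}_\lambda$, which is what the second row of the paper's $\G$ uses: $\mathbf{2}_\lambda=(\emptyset\hookrightarrow\mathbf{2})$. Its boundary inclusion is $(\emptyset\hookrightarrow\{\bullet\ \bullet\})\to(\emptyset\hookrightarrow\mathbf{2})$, and its codiagonal is $(\emptyset\hookrightarrow\{\bullet\rightrightarrows\bullet\})\to(\emptyset\hookrightarrow\mathbf{2})$. With this choice your verification goes through. Omitting the tight codiagonal, which the paper includes, is harmless: $j_Y g_\tau=g_\lambda j_X$ with $j_X$ and $g_\lambda$ faithful forces $g_\tau$ to be faithful.

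\textbf{Your fallback.} This is sound and is a genuinely different route. Take a strong generator of the locally presentable $\F$. Lifting against all $\emptyset\to G$ and all $\nabla_G\colon G+G\to G$ says exactly that each $\F_0(G,X)\to\F_0(G,Y)$ is bijective. A strong generator in a finitely complete category detects isomorphisms, so $g$ is invertible. This argument avoids the tight/loose bookkeeping altogether. The paper's explicit list, by contrast, makes the finiteness of the (co)domains immediately visible.
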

			
			\begin{proof}
				Clearly, a morphism $f \colon j_A \to j_B$ in $\F$ can be factorised as a map followed by an isomorphism.
				Consider the commutative diagram
				
				in $\F$. A lift $j_N \to j_S$ exists precisely when $r$ is an isomorphism, because we can define the lift by chasing the pre-image of $j_N \to j_T$. So, $\F$ is equipped with a weak factorisation system $(\mathccal{L}, \calR)$, where $\mathccal{L}$ consists of all maps, and that $\calR$ is precisely the isomorphisms.
				
				Note that the right class $\calR$ is cofibrantly generated by
				\begin{align*}
					\mathccal{G} :=
					\Bigggg\{
					&
					\begin{tikzcd}[ampersand replacement=\&]
						\emptyset \& \emptyset \\
						{\{\bullet\}} \& {\{\bullet\}}
						\arrow[equals, from=1-1, to=1-2]
						\arrow[hook, from=1-1, to=2-1]
						\arrow[hook, from=1-2, to=2-2]
						\arrow[equals, from=2-1, to=2-2]
					\end{tikzcd},
					\begin{tikzcd}[ampersand replacement=\&]
						{\{\bullet \quad \bullet\}} \& {\{\bullet \quad \bullet\}} \\
						{\{\bullet \to \bullet\}} \& {\{\bullet \to \bullet\}}
						\arrow[equals, from=1-1, to=1-2]
						\arrow[hook, from=1-1, to=2-1]
						\arrow[hook, from=1-2, to=2-2]
						\arrow[equals, from=2-1, to=2-2]
					\end{tikzcd},
					\begin{tikzcd}[ampersand replacement=\&]
						{\{\bullet \rightrightarrows \bullet\}} \& {\{\bullet \rightrightarrows \bullet\}} \\
						{\{\bullet \to \bullet\}} \& {\{\bullet \to \bullet\}}
						\arrow[equals, from=1-1, to=1-2]
						\arrow[from=1-1, to=2-1]
						\arrow[from=1-2, to=2-2]
						\arrow[equals, from=2-1, to=2-2]
					\end{tikzcd},
					\begin{tikzcd}[ampersand replacement=\&]
						{\{\bullet \quad \bullet\}} \& {\{\bullet \quad \bullet\}} \\
						{\{\bullet\}} \& {\{\bullet\}}
						\arrow[equals, from=1-1, to=1-2]
						\arrow[from=1-1, to=2-1]
						\arrow[from=1-2, to=2-2]
						\arrow[equals, from=2-1, to=2-2]
					\end{tikzcd},\\
					&\quad
					\begin{tikzcd}[ampersand replacement=\&]
						\emptyset \& \emptyset \\
						\emptyset \& {\{\bullet\}}
						\arrow[equals, from=1-1, to=1-2]
						\arrow[equals, from=1-1, to=2-1]
						\arrow[hook, from=1-2, to=2-2]
						\arrow[hook, from=2-1, to=2-2]
					\end{tikzcd}, \quad \quad
					\begin{tikzcd}[ampersand replacement=\&]
						\emptyset \& {\{\bullet \quad \bullet\}} \\
						\emptyset \& {\{\bullet \to \bullet\}}
						\arrow[hook, from=1-1, to=1-2]
						\arrow[equals, from=1-1, to=2-1]
						\arrow[hook, from=1-2, to=2-2]
						\arrow[hook, from=2-1, to=2-2]
					\end{tikzcd}, \quad \quad
					\begin{tikzcd}[ampersand replacement=\&]
						\emptyset \& {\{\bullet \rightrightarrows \bullet\}} \\
						\emptyset \& {\{\bullet \to \bullet\}}
						\arrow[hook, from=1-1, to=1-2]
						\arrow[equals, from=1-1, to=2-1]
						\arrow[from=1-2, to=2-2]
						\arrow[hook, from=2-1, to=2-2]
					\end{tikzcd}, \quad \quad
					\begin{tikzcd}[ampersand replacement=\&]
						\emptyset \& {\{\bullet \quad \bullet\}} \\
						\emptyset \& {\{\bullet\}}
						\arrow[hook, from=1-1, to=1-2]
						\arrow[equals, from=1-1, to=2-1]
						\arrow[from=1-2, to=2-2]
						\arrow[hook, from=2-1, to=2-2]
					\end{tikzcd}
					\Bigggg\},
				\end{align*}
				which ensures that the maps in $\calR$ are precisely the isomorphisms on both the tight parts and the loose parts.
				
				
				Furthermore, $\mathccal{L}$ is clearly corner-stable, as it contains every maps in $\F$.
			\end{proof}
			
			\begin{defi}
				An \emph{enhanced $2$-category} $\bbA$ is a category $\bbA$ enriched in $\F$. 
			\end{defi}
			
			This 
			means $\bbA$ has
			\begin{enumerate}
				\item [$\bullet$] objects $x, y, \cdots$;
				\item [$\bullet$] hom-objects $\bbA(x, y)$ in $\F$, each consists 
				of a full embedding $\bbA(x, y)_\tau \hookrightarrow \bbA(x, 
				y)_\lambda$ of the tight part into the loose part.
			\end{enumerate}
			
			We can form a $2$-category $\calA_\tau$ as follows:
			
			\begin{enumerate}
				\item [$\bullet$] $\calA_\tau$ has all the objects of $\bbA$;
				\item [$\bullet$] the hom-categories $\calA_\tau (x, y)$ for any 
				objects $x, y$ are $\bbA(x, y)_\tau$.
			\end{enumerate}
			
			Similarly, we can form a $2$-category $\calA_\lambda$ by setting the 
			hom-categories $\calA_\lambda (x, y)$  as $\bbA(x, y)_\lambda$ for any 
			objects $x, y$.
			
			Since for each pair of objects $x, y$, $\calA_\tau (x, y) \hookrightarrow 
			\calA_\lambda (x, y)$ is a full-embedding, we obtain a $2$-functor
			
			\begin{align*}
				J_\bbA \colon \calA_\tau &\to \calA_\lambda.
			\end{align*}
			
			By construction, $J_\bbA$ is identity-on-objects, faithful, and locally 
			fully faithful.
			
			\begin{rk}
				We may identify an $\F$-category $\bbA$ with $J_\bbA$. Indeed, any 
				$2$-functor which is identity-on-objects, faithful, and locally 
				fully faithful uniquely determined an $\F$-category.
			\end{rk}
			
			The morphisms in $\calA_\tau$ are called the \emph{tight} morphisms, 
			whereas 
			those in $\calA_\lambda$ are called \emph{loose}.
			
			\begin{nota}
				We write $A \to B$ for a tight morphism from $A$ to $B$, and $A 
				\leadsto B$ for a loose morphism from $A$ to $B$.
			\end{nota}
			
			\begin{nota}
				When we view $\F$ as an enhanced $2$-category, we write $\bbF$.
			\end{nota}
			
			Let $\bbA$ and $\bbB$ be two $\F$-categories. An \emph{$\F$-functor} 
			$F\colon 
			\bbA \to \bbB$ is a functor enriched in $\F$, which precisely means that 
			$F$ consists 
			of $2$-functors $F_\tau \colon \calA_\tau \to 
			\calB_\tau$ and $F_\lambda \colon \calA_\lambda \to \calB_\lambda$ 
			making the following diagram commute
			
			\begin{equation}
				\label{diag:F_functor}
				\begin{tikzcd}
					\calA_\tau \ar[r, hook, "J_\bbA"] \ar[d, "F_\tau"'] & \calA_\lambda 
					\ar[d, 
					"F_\lambda"]
					\\
					\calB_\tau \ar[r, hook, "J_\bbB"'] & \calB_\lambda
				\end{tikzcd}.
			\end{equation}
			
			
			Let $F, G\colon \bbA \rightrightarrows \bbB$ be two $\F$-functors. An 
			\emph{{$\F$}-natural 
				transformation}	$\alpha \colon F \to G$ then consists of a $2$-natural $\alpha_\lambda \colon F_\lambda \to G_\lambda$ where every $1$-component is tight.

			In a joint work \cite{ABK:2024} with Arkor and Bourke, we introduce and develop the theory of enhanced limit $2$-sketches, which demonstrates enhanced $2$-category theory as a powerful tool in capturing numerous $2$-dimensional structures and morphisms that involve a mixture of strictness and laxity.
			
			\begin{defi}
				An \emph{enhanced limit 2-sketch} is a limit $\F$-sketch $\mathmybb{S}$ in the sense of \longref{Definition}{def:V-sketch}.
				
				Let $\mathmybb{K}$ be a complete $\F$-category. An $\F$-model is an $\F$-functor $F \colon \mathmybb{S} \to \mathmybb{K}$ that sends all weighted cones in $\mathmybb{S}$ to limit cones in $\mathmybb{K}$.
			\end{defi}
			
			\begin{nota}
				The full sub-$\F$-category of the functor $\F$-category $[\mathmybb{S}, \mathmybb{K}]$ spanned by the $\F$-models is denoted as $\FMod{}(\mathmybb{S}, \mathmybb{K})$.
			\end{nota}
			
			Most of the time, however, we are interested in some weaker notions of morphisms, which could not be expressed simply via the functor $\V$-category.
			
			\begin{nota}
				We write $s, p, l, c$ for `strict', `pseudo', `lax', and `colax', respectively.
				
				For a pair $w', w$ of weaknesses in $\{s, p, l, c\}$, if $w'$ is stricter than $w$, then we write $w' \leq w$.
				
				We denote by $\overline{w}$ the dual of $w$, i.e., 
				\begin{equation*}
					\begin{aligned}
						\overline{s} = {s}, & &
						\overline{p} = {p}, & &
						\overline{l} = c, & &
						\overline{c} = l.
					\end{aligned}
				\end{equation*}
			\end{nota}

			\begin{defi}[{\cite[\S4.1]{LS:2012}, \cite[Definition 5.9]{ABK:2024}}]
				\label{def:functor-F-category}
				Let $w' \leq w \in \{s, p, l, c\}$ be a pair of weaknesses. Let $\mathmybb{S}$ and $\mathmybb{K}$ be small $\F$-categories. Denote by $\mathmybb{Fun}_{w', w}(\mathmybb{S}, \mathmybb{K})$ the small $\F$-category of $\F$-functors and \emph{loose $(w', w)$-natural transformations}, defined as follows.
				\begin{enumerate}
					\item Objects are $\F$-functors $\mathmybb{S} \to \mathmybb{K}$.
					\item Loose morphisms $\phi \colon M \leadsto N$ are $w$-natural
					transformations $\phi \colon M \to N$, which comprise $w$-natural families
					$\{ \phi_S \colon M(S) \leadsto N(S) \}_{S \in \mathmybb{S}}$, that become
					$w'$-natural on tight morphisms in $\mathmybb{S}$. These are called \emph{loose $(w', w)$-natural transformations}.
					
					\item A morphism $\phi \colon M \leadsto N$ is tight when each component $\phi_S
					\colon M(S) \leadsto N(S)$ is tight and each $\phi_s$ is a $w'$-cell.
					\item 2-cells are modifications.
				\end{enumerate}
			\end{defi}
			
			\begin{nota}
				The full sub-$\F$-category of $\mathmybb{Fun}_{w', w}(\mathmybb{S}, \mathmybb{K})$ spanned by the $\F$-models is denoted as $\FMod{s, w}(\mathmybb{S}, \mathmybb{K})$.
			\end{nota}
			
			\begin{example}[{\cite{ABK:2024}}]
				Examples of models of enhanced limit $2$-sketches include pseudo double categories, monoidal double categories, horizontal or vertical intercategories, and double (op)fibrations.
				
				Examples of loose $w$-natural transformations include $w$-monoidal functors between monoidal categories, $w$-double functors between double categories, and $w$-morphisms of fibrations between double (op)fibrations.
			\end{example}

			\subsection{Preliminaries on $2$-enhanced $2$-categories}
			\label{sec:F2}
			
			To capture the behaviour of $\FMod{s, w}(\mathmybb{T}, \bbK)$; we need to go further into $2$-enhanced $2$-categories, which are $2$-categories with \emph{three} types of $1$-morphisms.
			
			\begin{defi}
				Denote by $\mathbf{Cat}^\mathbf{3}$ the category of composable arrows in $\Cat$.
				Denote by $\F_2$ the full sub-category of $\mathbf{Cat}^\mathbf{3}$ determined by the full embeddings followed by full embeddings.
			\end{defi}
			
			In other words, an object of $\F_2$ is a pair of composable full embeddings
			\begin{equation*}
				A_\tau \xhookrightarrow{j_{A_\tau}} A_\theta \xhookrightarrow{j_{A_\theta}} A_\lambda,
			\end{equation*}
			denoted as $j_A$, and that a morphism $f$ from $j_A$ to $j_B$ in $\F$ is given by three functors 
			$f_\tau \colon A_\tau \to B_\tau$,  $f_\theta \colon A_\theta \to B_\theta$, and
			$f_\lambda \colon A_\lambda \to B_\lambda$ such that the diagram
			\[
			\begin{tikzcd}[ampersand replacement=\&]
				{A_\tau} \& {A_\theta} \& {A_\lambda} \\
				{B_\tau} \& {B_\theta} \& {B_\lambda}
				\arrow["{j_{A_\tau}}", hook, from=1-1, to=1-2]
				\arrow["{f_\tau}"', from=1-1, to=2-1]
				\arrow["{j_{A_\theta}}", hook, from=1-2, to=1-3]
				\arrow["{f_\theta}", from=1-2, to=2-2]
				\arrow["{f_\lambda}", from=1-3, to=2-3]
				\arrow["{j_{B_\tau}}"', hook, from=2-1, to=2-2]
				\arrow["{j_{B_\theta}}"', hook, from=2-2, to=2-3]
			\end{tikzcd}
			\]
			commutes

			We call $A_\tau$ the \emph{tight} part of $A$, $A_\theta$ the \emph{fit} part of $A$, and $A_\lambda$ the 
			\emph{loose} part of 
			$A$; similarly, we apply this terminology to $f$.
			
			\begin{rk}
				$\F_2$ is (co)complete and Cartesian closed.
			\end{rk}
			
			\begin{defi}
				A \emph{$2$-enhanced $2$-category} $\mathbbb{A}$ is a category  enriched in $\F_2$. 
			\end{defi}
			
			This 
			means $\mathbbb{A}$ has
			\begin{enumerate}
				\item [$\bullet$] objects $x, y, \cdots$;
				\item [$\bullet$] hom-objects $\mathbbb{A}(x, y)$ in $\F_2$, each consists 
				of a pair of composable full embeddings $\mathbbb{A}(x, y)_\tau \hookrightarrow \mathbbb{A}(x, y)_\theta \hookrightarrow \mathbbb{A}(x, 
				y)_\lambda$ of the tight part into the fit part into the loose part.
			\end{enumerate}
			
			%
			
			%
			%
			%
			%
			%
			%
			
			An \emph{$\F_2$-functor} 
			$F\colon 
			\mathbbb{A} \to \mathbbb{B}$ is a functor enriched in $\F_2$, which precisely means that $F$ is a $2$-functor between the loose parts that preserves tight and fit morphisms.
			
			%
			
			\subsection{The equivalence between \texorpdfstring{$\FMod{s, w}(\mathmybb{T}, \bbK)$}{Modₛ,ᵥᵥ (𝕊, 𝕂)} and \texorpdfstring{$\FTAlg_{s, w}$}{T-Algₛ,ᵥᵥ}}
			\label{sec:equivalence}
			
			Our result involves the concept of enhanced $2$-monads and algebras over them; we briefly recall the basics.
			%
			
			\begin{defi}
				\label{def:F-monad}
				An \emph{enhanced $2$-monad} is an $\F$-monad $T \colon \bbA \to \bbA$ on an enhanced $2$-category $\bbA$, i.e., a monad in the $2$-category $\FCat$ of enhanced $2$-categories.
			\end{defi}
			
			
			This means that $T$ consists of an $\F$-functor, together with a multiplication and a unit, which are both $\F$-natural transformations, satisfying the usual axioms for a monad in a $2$-category.
			
			Just as in $2$-monad theory, we also have the notion of strict $T$-algebras over enhanced $2$-monads. Since we only consider strict $T$-algebras in this paper, we would simply call them $T$-algebras.
			
			The notions of \emph{strict, pseudo, lax, colax} $T$-morphisms in the enhanced $2$-categorical context is introduced in \cite[\S4.3]{LS:2012}, and also discussed in \cite[\S6.3]{Bourke:2014}. A strict $T$-morphism $f$ from $(A, a \colon TA \to A)$ to $(B, b \colon TB \to B)$ consists of a commutative diagram
			\[
			\begin{tikzcd}[ampersand replacement=\&]
				TA \& TB \\
				A \& B
				\arrow["Tf", from=1-1, to=1-2]
				\arrow["a"', from=1-1, to=2-1]
				\arrow["b", from=1-2, to=2-2]
				\arrow["f"', from=2-1, to=2-2]
			\end{tikzcd},
			\]
			whereas a $w$-$T$-morphism is a $w$-morphism
			\[
			\begin{tikzcd}[ampersand replacement=\&]
				TA \& TB \\
				A \& B
				\arrow["Tf", loose, from=1-1, to=1-2]
				\arrow["a"', from=1-1, to=2-1]
				\arrow["w"{description}, draw=none, from=1-2, to=2-1]
				\arrow["b", from=1-2, to=2-2]
				\arrow["f"', loose, from=2-1, to=2-2]
			\end{tikzcd}.
			\]
			In other words, in the $2$-categorical sense, strict $T$-morphisms are strict $T_\tau$-morphisms, and $w$-$T$-morphisms are $w$-$T_\lambda$-morphisms.

			\begin{nota}
				Let $w \in \{s, p, l, c\}$.
				We denote by $\FTAlg_{s, w}$ the enhanced $2$-category of $T$-algebras, where the tight morphisms are given by strict $T$-morphisms, and the loose morphisms are given by $w$-$T$-morphisms; the $2$-morphisms are given by the $T_\lambda$-transformations.
			\end{nota}
			
			We are now ready to establish one of our main results in this article.
			
			\begin{theorem}
				\label{thm:equivalence}
				Let $\bbK$ be a locally presentable enhanced $2$-category. Let $\mathmybb{T}$ be an enhanced limit $2$-sketch with tight weighted cones, i.e., the shapes of weighted cones in $\mathmybb{T}$ are $2$-categories, viewed as chordate $\F$-categories. There is an enhanced $2$-monad $T$ on the enhanced $2$-category $\FMod{}(\mathccal{T}_\tau, \bbK)$ of models restricted to the tight morphisms such that
				$$\FMod{s, w}(\mathmybb{T}, \bbK) \simeq \FTAlg_{s, w}$$
				is an equivalence of enhanced $2$-categories.
			\end{theorem}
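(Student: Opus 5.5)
The plan is to obtain $T$ from the monadicity result, \longref{Proposition}{pro:V_models_monadic}, applied with $\V = \F$. This is legitimate because \longref{Lemma}{lem:F} shows that $\F$ satisfies all the hypotheses of \longref{Theorem}{thm:reflectivity}. I take $\mathmybb{S}$ to be the enhanced limit $2$-sketch whose underlying $\F$-category is $\mathccal{T}_\tau$, regarded as a chordate enhanced $2$-category (all morphisms tight), and whose cones are those of $\mathmybb{T}$. Since the weighted cones of $\mathmybb{T}$ are tight, their shapes, diagrams and cone components all lie in $\mathccal{T}_\tau$. Hence the inclusion $i \colon \mathmybb{S} \to \mathmybb{T}$ is a morphism of limit $\F$-sketches, it is bijective on objects (so essentially surjective), and it reflects weighted cones by construction. \longref{Proposition}{pro:V_models_monadic} then yields an $\F$-adjunction $L \cdot \Lan{i} \dashv i^*$ between $\FMod{}(\mathccal{T}_\tau, \bbK)$ and $\FMod{}(\mathmybb{T}, \bbK)$, and this adjunction is monadic. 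I let $T := i^* L \Lan{i}$ be the induced enhanced $2$-monad. Because $[\mathmybb{S},\bbK]$ consists of $\F$-functors out of a chordate domain, the $\F$-category $\FMod{}(\mathccal{T}_\tau, \bbK)$ is exactly the enhanced $2$-category of models restricted to tight morphisms. Monadicity gives an equivalence of enhanced $2$-categories $\FMod{}(\mathmybb{T},\bbK) \simeq \FTAlg_{s,s}$. Its tight part already handles objects and tight morphisms: the strict $T$-morphisms correspond to the $\F$-natural transformations.

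The real work is upgrading the loose morphisms from strict to $w$. A loose $(s,w)$-natural transformation $\phi \colon M \leadsto N$ between models of $\mathmybb{T}$ is, by definition, $2$-natural on $\mathccal{T}_\tau$. So it restricts to a loose morphism $i^*\phi$ of $\FMod{}(\mathccal{T}_\tau, \bbK)$, and what remains is its $w$-naturality with respect to the loose morphisms of $\mathmybb{T}$. I want to show that this data is exactly a $w$-cell filling the square formed by the structure maps $TM \to M$, $TN \to N$ and the morphisms $T i^*\phi$, $i^*\phi$.

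The approach is to describe loose $w$-natural transformations representably. Concretely, I would construct (or show the existence of) a \emph{$w$-morphism classifier}: for each model $M$, a model $M'$ such that loose $w$-transformations $M \leadsto N$ correspond naturally to strict loose maps $M' \to N$. I would try to produce $M'$ using the dotted $2$-limits of \cite{Ko:2023}, since dotted-$w$-natural transformations match loose $w$-natural transformations. Alternatively, I would run the standard argument of Blackwell--Kelly--Power, or Lack--Shulman \cite[\S4.3]{LS:2012}: first show that $T$ is accessible, using that $\bbK$ is locally presentable and that $L$ is built by a transfinite colimit (\longref{Proposition}{pro:general_EOST}); then use the resulting left adjoint to the inclusion $\FTAlg_{s,s} \hookrightarrow \FTAlg_{s,w}$ and compare it with the corresponding classifier on the sketch side.

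Given classifiers on both sides, the strict equivalence transports them. Equivalently, one checks directly that the components $\phi_t$ for loose $t \in \mathmybb{T}$ assemble, via the adjunction $\Lan{i} \dashv i^*$ and the universal property of $L$, into a single $2$-cell $b \cdot T(i^*\phi) \Rightarrow i^*\phi \cdot a$, together with the $w$-algebra-morphism axioms. The $2$-cells match because modifications on either side are determined by their components on objects, and $i$ is bijective on objects.

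I expect the main obstacle to be this middle step. The reflector $L$ is not a strict limit-preserving construction, so the lax data on loose morphisms must be shown to pass through $L\Lan{i}$ faithfully. That step needs dotted limits, or an explicit description of $\Lan{i}$ along the identity-on-objects $i$ as a coend over loose homs of $\mathmybb{T}$. With the coend description, a $2$-cell $TA \to A$-square is literally a family of $2$-cells indexed by loose morphisms of $\mathmybb{T}$, and the cone-preservation by models ensures that the reflection $L$ introduces no extra data.
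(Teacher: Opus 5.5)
Your first step matches the paper. Since $i \colon \mathccal{T}_\tau \hookrightarrow \mathmybb{T}$ is bijective on objects and reflects cones, \longref{Lemma}{lem:F} and \longref{Proposition}{pro:V_models_monadic} give $T$ and the equivalence on tight morphisms.

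The gap is the upgrade to loose $w$-morphisms, which is the real content of the theorem. You list two routes but carry out neither.

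\textbf{The classifier route is circular as stated.} Write $E$ for the strict equivalence and $Q$ for the algebra-side classifier. By Yoneda on tight morphisms, an isomorphism $E(M') \cong Q(EM)$ is equivalent to the bijection between loose $w$-natural transformations $M \to N$ and $w$-$T$-morphisms $EM \to EN$. That bijection is exactly what you are trying to prove, so ``the strict equivalence transports them'' assumes the conclusion. Moreover, a classifier is a left adjoint to the inclusion of strict morphisms and is built from colimits (codescent objects). Dotted $2$-limits do not build $M'$; what they build is the dual data, namely $\overline{w}$-limits of loose morphisms.

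\textbf{The direct route rests on an unproved claim}, that $L$ ``introduces no extra data''. The universal properties of $\Lan{i}$ and $L$ only concern $\F$- and $2$-natural transformations. In general $i^*\phi \cdot a$ is not $i^*$ of any morphism out of $L\Lan{i}\, i^*M$ in $\FMod{}(\mathmybb{T}, \bbK)$. So nothing you invoke produces the $2$-cell $b \cdot T(i^*\phi) \Rightarrow i^*\phi \cdot a$, let alone a bijection.

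\textbf{What is missing} is a recognition theorem in which loose morphisms are determined by tight data. The paper packages tight, $2$-natural and loose $w$-natural transformations into an $\F_2$-category. It then applies \cite[Theorem 26]{Bourke:2014} together with \cite[Proposition 3]{Bourke:2014}. Given strict monadicity, it suffices that:
\begin{enumerate}
\item $\FMod{}(\mathmybb{T}, \bbK)$, $\FMod{}(\mathccal{T}_\tau, \bbK)$ and $\FMod{s, w}(\mathmybb{T}, \bbK)$ have $\overline{w}$-limits of loose morphisms;
\item $i^*_{s,w}$ is locally faithful and reflects identity $2$-cells (this is your remark that $i$ is bijective on objects);
\item $i^*_{s,w}$ satisfies weak $w$-doctrinal adjunction.
\end{enumerate}

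The substantial input is \longref{Proposition}{pro:colax_lim_arrow}. There the $\overline{w}$-limit of a loose morphism is built pointwise from dotted limits in $\bbK$. The tight-cone hypothesis is used a second time there, to show the result is again a model. Your plan never uses that hypothesis in the loose step, which is itself a sign that something is missing.

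Doctrinal adjunction is verified by extending a right adjoint $\beta$ of $\alpha \cdot i$ to a loose $w$-transformation, using mates of the $\alpha_t$. The triangle identities then force strictness on tight morphisms.
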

			
			\begin{proof}
				First, we have an $\F_2$-category $\mathbbb{Mod}(\mathmybb{T}, \mathmybb{K})$, where the tight, fit, and loose morphisms are the $\F$-natural transformations, the $2$-natural transformations (between the loose parts), and the loose $w$-natural transformations, respectively.
				
				Note that the inclusion $i \colon \mathccal{T}_\tau \hookrightarrow \mathmybb{T}$ of the tight part of $\mathmybb{T}$, viewed as a chordate $\F$-category, into $\mathmybb{T}$ reflects weighted cones, and is essentially surjective. Consider the pre-composition
				$$\FMod{}(\mathmybb{T}, \mathmybb{K}) \xrightarrow{i^*} \FMod{}(\mathccal{T}_\tau, \mathmybb{K})$$
				by $i$; by \longref{Lemma}{lem:F} and \longref{Proposition}{pro:V_models_monadic}, it is monadic.
				
				Denote by $i^*_{s, w}$ the pre-composition
				$$\FMod{s, w}(\mathmybb{T}, \mathmybb{K}) \xrightarrow{i^*} \FMod{s, w}(\mathccal{T}_\tau, \mathmybb{K})$$
				by $i$ of $\FMod{s, w}(\mathmybb{T}, \mathmybb{K})$. Observe that $\FMod{s, w}(\mathccal{T}_\tau, \mathmybb{K}) = \FMod{}(\mathccal{T}_\tau, \mathmybb{K})$, and so we have
				\[
				\begin{tikzcd}[ampersand replacement=\&]
					{\FMod{}(\mathmybb{T}, \mathmybb{K})} \&\& {\FMod{s, w}(\mathmybb{T}, \mathmybb{K})} \\
					\& {\FMod{}(\mathccal{T}_\tau, \mathmybb{K})}
					\arrow[hook, from=1-1, to=1-3]
					\arrow["{i^*}"', from=1-1, to=2-2]
					\arrow["{i^*_{s, w}}", from=1-3, to=2-2]
				\end{tikzcd},
				\]
				which means our $\F_2$-category $\mathbbb{Mod}(\mathmybb{T}, \mathmybb{K})$ fits into the setting in \cite[Theorem 26]{Bourke:2014}. 
				
				Together with \cite[Proposition 3]{Bourke:2014}, it suffices to check that $\FMod{}(\mathmybb{T}, \mathmybb{K})$, $\FMod{}(\mathccal{T}_\tau, \mathmybb{K})$, and $\FMod{s, w}(\mathmybb{T}, \mathmybb{K})$ all admit $\overline{w}$-limits of loose morphisms, and that $i^*_{s, w}$ is locally faithful, reflects identity $2$-morphisms, and satisfies weak $w$-doctrinal adjunction.
				
				Since $\bbK$ is complete, and also the inclusion $\FMod{}(\mathmybb{S}, \bbK) \hookrightarrow [\mathmybb{S}, \bbK]$ is reflective by \longref{Proposition}{pro:V_models_monadic} for any enhanced limit $2$-sketch $\mathmybb{S}$, it is immediate that both  $\FMod{}(\mathmybb{T}, \mathmybb{K})$ and $\FMod{}(\mathccal{T}_\tau, \mathmybb{K})$ have $\overline{w}$-limits of loose morphisms.
				
				As we assume that all the weighted cones in $\mathmybb{T}$ are tight, following the upcoming \longref{Proposition}{pro:colax_lim_arrow}, we conclude that $\FMod{s, w}(\mathmybb{T}, \mathmybb{K})$ admits $\overline{w}$-limits of loose morphisms.
				
				Moreover, since $i$ is bijective-on-objects, it is clear that $i^*_{s, w}$ is locally faithful and reflects identity $2$-morphisms.
				
				It then remains to check that $i^*_{s, w}$ satisfies weak $w$-doctrinal adjunction. For simplicity, we prove this statement when $w = l$; the other cases could be argued similarly.
				
				Let $F, G \colon \mathmybb{T} \rightrightarrows \bbK$ be $\F$-models, $\alpha \colon F \to G$ be an $\F$-natural transformation, and $\beta \colon G \cdot i \to F \cdot i$ be a $2$-natural transformation (between the loose parts). Suppose that $(\varepsilon, \alpha \cdot i \dashv \beta, \eta)$ is an adjunction in $\FMod{}(\mathccal{T}_\tau, \mathmybb{K})$.
				
				We construct a loose lax natural transformation $\overline{\beta} \colon G \to F$ by defining the $1$-component $\overline{\beta}_T$ at any object $T \in \mathmybb{T}$ as $\beta_T$, and the $2$-component $\overline{\beta}_t$ at a loose morphism $t \colon T_1 \leadsto T_2$ in $\mathmybb{T}$ as the mate
				\[
				\begin{tikzcd}[ampersand replacement=\&]
					\& {G(T_1)} \& {G(T_2)} \& {G(T_2)} \\
					{F(T_1)} \& {F(T_1)} \& {F(T_2)}
					\arrow["{G(t)}", loose, from=1-2, to=1-3]
					\arrow["{\alpha_{T_1}}"{description}, from=1-2, to=2-2]
					\arrow[""{name=0, anchor=center, inner sep=0}, equals, from=1-3, to=1-4]
					\arrow["{\alpha_{T_2}}"{description}, from=1-3, to=2-3]
					\arrow[""{name=1, anchor=center, inner sep=0}, "{\overline{\beta}_{T_1} = {\beta}_{T_1}}", from=2-1, to=1-2]
					\arrow[""{name=2, anchor=center, inner sep=0}, equals, from=2-1, to=2-2]
					\arrow["{F(t)}"', loose, from=2-2, to=2-3]
					\arrow[""{name=3, anchor=center, inner sep=0}, "{\overline{\beta}_{T_2} = {\beta}_{T_2}}"', from=2-3, to=1-4]
					\arrow["{\eta_{T_2}}"', between={0.2}{0.8}, Rightarrow, from=0, to=3]
					\arrow["{\varepsilon_{T_1}}", between={0.2}{0.8}, Rightarrow, from=1, to=2]
				\end{tikzcd}
				\]
				of $\alpha_t$. Note that since $\eta$ is a modification, we have, for a tight morphism $t \colon T_1 \to T_2$,
				\[
				\begin{tikzcd}[ampersand replacement=\&]
					{G(T_1)} \&\& {G(T_2)} \& \\
					\&\&\& {F(T_2)} \\
					{G(T_1)} \&\& {G(T_2)}
					\arrow["{G(t)}", from=1-1, to=1-3]
					\arrow[equals, from=1-1, to=3-1]
					\arrow["{\alpha_{T_2}}", from=1-3, to=2-4]
					\arrow[""{name=0, anchor=center, inner sep=0}, equals, from=1-3, to=3-3]
					\arrow["{\beta_{T_2}}", from=2-4, to=3-3]
					\arrow["{G(t)}"', from=3-1, to=3-3]
					\arrow["{\eta_{T_2}}"', between={0.4}{0.8}, Rightarrow, from=0, to=2-4]
				\end{tikzcd}
				=
				\begin{tikzcd}[ampersand replacement=\&]
					{G(T_1)} \&\& {G(T_2)} \& \\
					\& {F(T_1)} \&\& {F(T_2)} \\
					{G(T_1)} \&\& {G(T_2)}
					\arrow["{G(t)}", from=1-1, to=1-3]
					\arrow["{\alpha_{T_1}}", from=1-1, to=2-2]
					\arrow[""{name=0, anchor=center, inner sep=0}, equals, from=1-1, to=3-1]
					\arrow["{\alpha_t}"{description}, draw=none, from=1-3, to=2-2]
					\arrow["{\alpha_{T_2}}", from=1-3, to=2-4]
					\arrow["{F(t)}"', from=2-2, to=2-4]
					\arrow["{\beta_{T_1}}", from=2-2, to=3-1]
					\arrow["{\beta_t}"{description}, draw=none, from=2-2, to=3-3]
					\arrow["{\beta_{T_2}}", from=2-4, to=3-3]
					\arrow["{G(t)}"', from=3-1, to=3-3]
					\arrow["{\eta_{T_1}}"', between={0.4}{0.8}, Rightarrow, from=0, to=2-2]
				\end{tikzcd},
				\]
				and so if $t$ is tight, we have
				\[
				\begin{tikzcd}[ampersand replacement=\&]
					\& {G(T_1)} \& {G(T_2)} \& {G(T_2)} \\
					{F(T_1)} \& {F(T_1)} \& {F(T_2)}
					\arrow["{G(t)}", from=1-2, to=1-3]
					\arrow["{\alpha_{T_1}}"{description}, from=1-2, to=2-2]
					\arrow[""{name=0, anchor=center, inner sep=0}, equals, from=1-3, to=1-4]
					\arrow["{\alpha_{T_2}}"{description}, from=1-3, to=2-3]
					\arrow[""{name=1, anchor=center, inner sep=0}, "{\overline{\beta}_{T_1} = {\beta}_{T_1}}", from=2-1, to=1-2]
					\arrow[""{name=2, anchor=center, inner sep=0}, equals, from=2-1, to=2-2]
					\arrow["{F(t)}"', from=2-2, to=2-3]
					\arrow[""{name=3, anchor=center, inner sep=0}, "{\overline{\beta}_{T_2} = {\beta}_{T_2}}"', from=2-3, to=1-4]
					\arrow["{\eta_{T_2}}"', between={0.2}{0.8}, Rightarrow, from=0, to=3]
					\arrow["{\varepsilon_{T_1}}", between={0.2}{0.8}, Rightarrow, from=1, to=2]
				\end{tikzcd}
				=
				\begin{tikzcd}[ampersand replacement=\&]
					\& {G(T_1)} \& {G(T_1)} \& {G(T_2)} \\
					{F(T_1)} \& {F(T_1)}
					\arrow[""{name=0, anchor=center, inner sep=0}, equals, from=1-2, to=1-3]
					\arrow["{\alpha_{T_1}}"{description}, from=1-2, to=2-2]
					\arrow["{G(t)}", from=1-3, to=1-4]
					\arrow[""{name=1, anchor=center, inner sep=0}, "{\overline{\beta}_{T_1} = {\beta}_{T_1}}", from=2-1, to=1-2]
					\arrow[""{name=2, anchor=center, inner sep=0}, equals, from=2-1, to=2-2]
					\arrow[""{name=3, anchor=center, inner sep=0}, "{\overline{\beta}_{T_1} = {\beta}_{T_1}}"', from=2-2, to=1-3]
					\arrow["{\eta_{T_1}}"', between={0.2}{0.8}, Rightarrow, from=0, to=3]
					\arrow["{\varepsilon_{T_1}}", between={0.2}{0.8}, Rightarrow, from=1, to=2]
				\end{tikzcd},
				\]
				which gives the identity by the triangle identities. The naturality of $\overline{\beta}$ is inherited from $\alpha$. Altogether, $\overline{\beta}$ is a loose lax natural transformation.
				
				Now, we check that $\eta$ can be extended to a modification $1 \to \overline{\beta} \cdot \alpha$. In fact, for a loose morphisms $t \colon T_1 \leadsto T_2$ in $\mathmybb{T}$, we have
				\begin{align*}
					\begin{tikzcd}[ampersand replacement=\&]
						{G(T_1)} \&\& {G(T_2)} \& \\
						\& {F(T_1)} \&\& {F(T_2)} \\
						{G(T_1)} \&\& {G(T_2)}
						\arrow["{G(t)}", loose, from=1-1, to=1-3]
						\arrow["{\alpha_{T_1}}", from=1-1, to=2-2]
						\arrow[""{name=0, anchor=center, inner sep=0}, equals, from=1-1, to=3-1]
						\arrow["{\alpha_t}"{description}, draw=none, from=1-3, to=2-2]
						\arrow["{\alpha_{T_2}}", from=1-3, to=2-4]
						\arrow["{F(t)}"', loose, from=2-2, to=2-4]
						\arrow["{\overline{\beta}_{T_1}}", from=2-2, to=3-1]
						\arrow["{\overline{\beta}_{T_2}}", from=2-4, to=3-3]
						\arrow["{\overline{\beta}_t}"', between={0.45}{0.55}, Rightarrow, from=3-1, to=2-4]
						\arrow["{G(t)}"', loose, from=3-1, to=3-3]
						\arrow["{\eta_{T_1}}"', between={0.4}{0.8}, Rightarrow, from=0, to=2-2]
					\end{tikzcd}
					&=
					\begin{tikzcd}[ampersand replacement=\&]
						{G(T_1)} \&\& {G(T_2)} \& \\
						\& {F(T_1)} \& {F(T_1)} \& {F(T_2)} \\
						{G(T_1)} \& {G(T_2)} \& {G(T_2)}
						\arrow["{G(t)}", loose, from=1-1, to=1-3]
						\arrow["{\alpha_{T_1}}", from=1-1, to=2-2]
						\arrow[""{name=0, anchor=center, inner sep=0}, equals, from=1-1, to=3-1]
						\arrow["{\alpha_t}"{description}, draw=none, from=1-3, to=2-2]
						\arrow["{\alpha_{T_2}}", from=1-3, to=2-4]
						\arrow[equals, from=2-2, to=2-3]
						\arrow["{\overline{\beta}_{T_1}}"{description}, from=2-2, to=3-1]
						\arrow["{F(t)}", loose, from=2-3, to=2-4]
						\arrow["{\overline{\beta}_{T_2}}", from=2-4, to=3-3]
						\arrow[""{name=1, anchor=center, inner sep=0}, "{\alpha_{T_1}}"', from=3-1, to=2-3]
						\arrow["{G(t)}"', loose, from=3-1, to=3-2]
						\arrow[""{name=2, anchor=center, inner sep=0}, "{\alpha_{T_2}}", from=3-2, to=2-4]
						\arrow[equals, from=3-2, to=3-3]
						\arrow["{\eta_{T_1}}"', between={0.4}{0.8}, Rightarrow, from=0, to=2-2]
						\arrow["{\varepsilon_{T_1}}", between={0.2}{1}, Rightarrow, from=1, to=2-2]
						\arrow["{\eta_{T_2}}", between={0}{0.8}, Rightarrow, from=3-3, to=2]
					\end{tikzcd}
					\\
					&=
					\begin{tikzcd}[ampersand replacement=\&]
						{G(T_1)} \&\& {G(T_2)} \& \\
						\&\&\& {F(T_2)} \\
						{G(T_1)} \&\& {G(T_2)}
						\arrow["{G(t)}", loose, from=1-1, to=1-3]
						\arrow[equals, from=1-1, to=3-1]
						\arrow["{\alpha_{T_2}}", from=1-3, to=2-4]
						\arrow[""{name=0, anchor=center, inner sep=0}, equals, from=1-3, to=3-3]
						\arrow["{\overline{\beta}_{T_2}}", from=2-4, to=3-3]
						\arrow["{G(t)}"', loose, from=3-1, to=3-3]
						\arrow["{\eta_{T_2}}"', between={0.4}{0.8}, Rightarrow, from=0, to=2-4]
					\end{tikzcd},
				\end{align*}
				which gives the modification axiom. Similarly,
				\begin{align*}
					\begin{tikzcd}[ampersand replacement=\&]
						\& {F(T_1)} \&\& {F(T_2)} \\
						{G(T_1)} \&\& {G(T_2)} \\
						\& {F(T_1)} \&\& {F(T_2)}
						\arrow["{F(t)}", loose, from=1-2, to=1-4]
						\arrow["{\alpha_t}"{description}, draw=none, from=1-2, to=2-3]
						\arrow[""{name=0, anchor=center, inner sep=0}, equals, from=1-4, to=3-4]
						\arrow["{\alpha_{T_1}}", from=2-1, to=1-2]
						\arrow["{G(t)}", loose, from=2-1, to=2-3]
						\arrow["{\alpha_{T_2}}", from=2-3, to=1-4]
						\arrow["{\overline{\beta}_{T_1}}", from=3-2, to=2-1]
						\arrow["{\overline{\beta}_t}", between={0.2}{0.7}, Rightarrow, from=3-2, to=2-3]
						\arrow["{F(t)}"', loose, from=3-2, to=3-4]
						\arrow["{\overline{\beta}_{T_2}}", from=3-4, to=2-3]
						\arrow["{\varepsilon_{T_2}}"', between={0.3}{0.7}, Rightarrow, from=2-3, to=0]
					\end{tikzcd}
					&=
					\begin{tikzcd}[ampersand replacement=\&]
						\& {F(T_1)} \&\& {F(T_2)} \\
						{G(T_1)} \& {G(T_2)} \& {G(T_2)} \\
						\& {F(T_1)} \& {F(T_1)} \& {F(T_2)}
						\arrow["{F(t)}", loose, from=1-2, to=1-4]
						\arrow["{\alpha_t}"{description}, draw=none, from=1-2, to=2-3]
						\arrow[""{name=0, anchor=center, inner sep=0}, equals, from=1-4, to=3-4]
						\arrow["{\alpha_{T_1}}", from=2-1, to=1-2]
						\arrow["{G(t)}", loose, from=2-1, to=2-2]
						\arrow[""{name=1, anchor=center, inner sep=0}, "{\alpha_{T_1}}", from=2-1, to=3-3]
						\arrow[equals, from=2-2, to=2-3]
						\arrow[""{name=2, anchor=center, inner sep=0}, "{\alpha_{T_2}}"', from=2-2, to=3-4]
						\arrow["{\alpha_{T_2}}", from=2-3, to=1-4]
						\arrow["{\overline{\beta}_{T_1}}", from=3-2, to=2-1]
						\arrow[equals, from=3-2, to=3-3]
						\arrow["{F(t)}"', loose, from=3-3, to=3-4]
						\arrow["{\overline{\beta}_{T_2}}"{description}, from=3-4, to=2-3]
						\arrow["{\varepsilon_{T_1}}", between={0.2}{1}, Rightarrow, from=1, to=3-2]
						\arrow["{\varepsilon_{T_2}}"', between={0.3}{0.7}, Rightarrow, from=2-3, to=0]
						\arrow["{\eta_{T_2}}"', between={0}{0.8}, Rightarrow, from=2-3, to=2]
					\end{tikzcd}
					\\
					&=
					\begin{tikzcd}[ampersand replacement=\&]
						\& {F(T_1)} \&\& {F(T_2)} \\
						{G(T_1)} \\
						\& {F(T_1)} \&\& {F(T_2)}
						\arrow["{F(t)}", loose, from=1-2, to=1-4]
						\arrow[""{name=0, anchor=center, inner sep=0}, equals, from=1-2, to=3-2]
						\arrow[equals, from=1-4, to=3-4]
						\arrow["{\alpha_{T_1}}", from=2-1, to=1-2]
						\arrow["{\overline{\beta}_{T_1}}", from=3-2, to=2-1]
						\arrow["{F(t)}"', loose, from=3-2, to=3-4]
						\arrow["{\varepsilon_{T_1}}"', between={0.3}{0.7}, Rightarrow, from=2-1, to=0]
					\end{tikzcd},
				\end{align*}
				so $\varepsilon$ also extends to a modification $\alpha \cdot \overline{\beta} \to 1$. Consequently, we conclude that $\alpha \dashv \overline{\beta}$ is an adjunction in $\FMod{s, l}(\mathmybb{T}, \bbK)$, and hence, $i^*_{s, w}$ satisfies weak $l$-adjunction. This argument also applies to the case when $w = p, c$.
				
				As a result, we obtain a commutative diagram
				\begin{equation}
					\label{diag:equiv}
					\begin{tikzcd}[ampersand replacement=\&]
						{\FMod{s, w}(\mathbb{T}, \mathbb{K})} \&\& {\FTAlg_{s, w}} \\
						\& {\FMod{}(\mathcal{T}_\tau, \mathbb{K})}
						\arrow["\simeq", from=1-1, to=1-3]
						\arrow["{i^*_{s, w}}"', from=1-1, to=2-2]
						\arrow["{U_{s, w}}", from=1-3, to=2-2]
					\end{tikzcd}
				\end{equation}
				of enhanced $2$-categories, where the top row is an equivalence, and that $U_{s, w}$ denotes the canonical underlying $\F$-functor.
			\end{proof}
			
			\begin{rk}
				\label{rk:tight_cones_1}
				We specifically assume that the enhanced $2$-sketch $\mathmybb{T}$ has only tight weighted cones. In fact, this assumption is not too restrictive in practice. As explained in \cite[Remark 5.11]{ABK:2024}, the tight morphisms in an enhanced $2$-sketch are typically precisely the maps encoding type dependencies, i.e., the \emph{display maps} in \cite{Taylor:1987}, which are the morphisms that one would like to take limits of. 
				
				In other words, for many interesting and important examples, the weighted cones are always tight.
			\end{rk}
			
				%
			
			\begin{rk}
				The assumption that $\bbK$ is locally presentable ensures that the smallness condition required for \longref{Lemma}{lem:F} and \longref{Proposition}{pro:V_models_monadic} is fulfilled. 
				
				Nonetheless, the full power of locally presentability is not necessary. In fact, what we actually need is that the (co)domains of the class of maps in \longref{Lemma}{lem:model_orthogonal} satisfy the smallness condition.
			\end{rk}
			
			\begin{coroll}
				\label{cor:functors_as_algebras}
				Let $\mathmybb{T}$ and $\bbK$ be enhanced $2$-categories. There exists an enhanced $2$-monad $T$ on  $[\mathcal{T}_\tau, \bbK]$ such that 
				$$\FTAlg_{s, w} \cong \mathmybb{Fun}_{s, w}(\mathmybb{T}, \mathmybb{K})$$
				is an isomorphism between the enhanced $2$-categories of $T$-algebras and $\F$-functors from $\mathmybb{T}$ to $\bbK$.
				
				In particular, if $w = s$, then we have an isomorphism
				$$\TAlg_{s} \cong [\mathmybb{T}, \mathmybb{K}]_\tau$$
				of $2$-categories.
			\end{coroll}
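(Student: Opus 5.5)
The plan is to regard $\mathmybb{T}$ as an enhanced limit $2$-sketch with \emph{no} weighted cones, so that $\F$-models are simply $\F$-functors, and then apply \longref{Theorem}{thm:equivalence} (or rather its proof) to this special case.

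First, with an empty collection of cones we have $\FMod{}(\mathmybb{T}, \bbK) = [\mathmybb{T}, \bbK]$, $\FMod{}(\mathcal{T}_\tau, \bbK) = [\mathcal{T}_\tau, \bbK]$ and $\FMod{s, w}(\mathmybb{T}, \bbK) = \mathmybb{Fun}_{s, w}(\mathmybb{T}, \bbK)$. The cone condition is vacuous, so the inclusion $i \colon \mathcal{T}_\tau \hookrightarrow \mathmybb{T}$ trivially reflects cones. It is also bijective on objects. Monadicity of $i^*$ then no longer needs the reflector of \longref{Theorem}{thm:reflectivity}, since the left adjoint is just $\Lan{i}$. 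Hence no local presentability of $\bbK$ is required beyond the existence of these Kan extensions; I would assume $\mathmybb{T}$ small and $\bbK$ cocomplete enough, as the statement implicitly does. Monadicity follows from the argument of \longref{Proposition}{pro:V_models_monadic} with $L$ taken to be the identity. Coequalisers in $[\mathmybb{T}, \bbK]$ are computed pointwise, and $i^*$ reflects isomorphisms because $i$ is essentially surjective.

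Second, I would run the proof of \longref{Theorem}{thm:equivalence} through \cite[Theorem 26]{Bourke:2014}. The required ingredients are as follows.
\begin{enumerate}
\item $\overline{w}$-limits of loose morphisms in $[\mathmybb{T}, \bbK]$ and $[\mathcal{T}_\tau, \bbK]$. These are computed pointwise from those in $\bbK$.
\item $\overline{w}$-limits of loose morphisms in $\mathmybb{Fun}_{s, w}(\mathmybb{T}, \bbK)$. These come from \longref{Proposition}{pro:colax_lim_arrow} with no cones.
\item Local faithfulness of $i^*_{s, w}$ and reflection of identity $2$-cells. Both hold because $i$ is bijective on objects.
\item Weak $w$-doctrinal adjunction. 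I would reuse verbatim the mate construction $\overline{\beta}$ from the proof of \longref{Theorem}{thm:equivalence}.
\end{enumerate}
This yields an equivalence $\mathmybb{Fun}_{s, w}(\mathmybb{T}, \bbK) \simeq \FTAlg_{s, w}$ over $[\mathcal{T}_\tau, \bbK]$.

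The main point is upgrading this equivalence to an \emph{isomorphism}. For this I would check that $i^* \colon [\mathmybb{T}, \bbK] \to [\mathcal{T}_\tau, \bbK]$ is strictly monadic, not merely monadic. Concretely, the comparison functor should be bijective on objects. Since $i$ is identity on objects and $i^*$ creates $i^*$-split coequalisers strictly, a $T$-algebra structure on $H \colon \mathcal{T}_\tau \to \bbK$ corresponds to a unique extension of $H$ to the loose morphisms of $\mathmybb{T}$. Here $T = i^*\Lan{i}$. I would argue via Beck's theorem in its strict form: $i^*$ creates (not merely reflects) these coequalisers, because pointwise colimits give literal equalities on objects.

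Bourke's theorem then produces a comparison that is bijective on objects, since it is the identity on underlying data. It is fully faithful on tight and loose hom-categories by the equivalence. Being bijective on objects and fully faithful, it is therefore an isomorphism.

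Finally, setting $w = s$ and passing to tight parts gives $\TAlg_s \cong [\mathmybb{T}, \bbK]_\tau$. This holds because strict $T$-morphisms correspond exactly to $\F$-natural transformations, which are the tight morphisms of the functor $\F$-category.
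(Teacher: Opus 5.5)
Your proposal is correct and follows essentially the paper's route. You treat $\mathmybb{T}$ as a sketch with no cones, which is the paper's ``take $L$ to be the identity''. You use that $i$ is bijective on objects, so $i^*$ strictly creates $i^*$-split coequalisers and is strictly monadic by Beck--Dubuc, and you then invoke \cite[Theorem 26]{Bourke:2014} to get an isomorphism rather than just an equivalence. The differences are presentational: you spell out Bourke's hypotheses and the implicit smallness and (co)completeness assumptions, which the paper leaves to the proof of \longref{Theorem}{thm:equivalence}, and you phrase the final step as ``equivalence plus bijective on objects'' rather than citing the strict form of Bourke's theorem directly.
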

			
			\begin{proof}
				We take $L$ to be the identity $\F$-functor in \longref{Proposition}{pro:V_models_monadic}. Following the proof of \longref{Proposition}{pro:V_models_monadic}, when $L$ is the identity and $i$ is bijective-on-objects, the restriction $i^* \colon [\mathcal{T}_\tau, \bbK] \to \mathmybb{Fun}_{s, w}(\mathmybb{S}, \mathmybb{K})$ strictly creates $i^*$-split coequalisers, which means $i^*(\overline{H}) = H$.
				
				Now, by \cite[Theorem II.2.1]{book:Dubuc:1970}, the adjunction from \longref{Proposition}{pro:V_models_monadic} is then a strictly monadic adjunction. Therefore, according to  \cite[Theorem 26]{Bourke:2014}, we have a commutative diagram
				\[
				\begin{tikzcd}[ampersand replacement=\&]
					{\mathmybb{Fun}_{s, w}(\mathmybb{T}, \mathmybb{K})} \&\& {\FTAlg_{s, w}} \\
					\& {[\mathcal{T}_\tau, \mathbbm{K}]}
					\arrow["\cong", from=1-1, to=1-3]
					\arrow["{i^*_{s, w}}"', from=1-1, to=2-2]
					\arrow["{U_{s, w}}", from=1-3, to=2-2]
				\end{tikzcd}
				\]
				of enhanced $2$-categories, where the top row is an isomorphism.
			\end{proof}

			\subsection{Limits in \texorpdfstring{$\FMod{s, w}(\mathmybb{T}, \bbK)$}{Modₛ,ᵥᵥ (𝕊, 𝕂)}}
			\label{sec:lim}
			
			In this section, we investigate what kinds of limits exist in the enhanced $2$-category $\FMod{s, w}(\mathmybb{T}, \bbK)$ of models of an enhanced limit $2$-sketch $\mathmybb{T}$ with tight cones in a locally presentable enhanced $2$-category $\bbK$. 
			
			First of all, we shall complete the proof of \longref{Theorem}{thm:equivalence}. To achieve this, we make use of the notion of dotted $2$-limits in \cite{Ko:2023}.

			Indeed, proving the statement directly with weighted limits is much more difficult than one would think. The functor $\F$-category defining $\F$-weighted limits involves morphisms that are too strict, when comparing to the enhanced $2$-category of models. In the former, the loose morphisms are $2$-natural transformations, yet, in the latter, the loose morphisms are {loose $w$-natural transformations}, instead. On the other hand, dotted $2$-limits are defined via \emph{dotted-$w$-natural transformations}, which are closely related to loose $w$-natural transformations. In other words, dotted $2$-limits provide a more straightforward and elementary approach to handle this subtle problem carefully. 
			
			We briefly recall the basic definitions.
			
			\begin{defi}[{\cite{Ko:2023}}]
				\label{def:marked_cat}
				Let $\twoA$ be a $2$-category. Let $\Sigma$ be a class of morphisms in 
				$\twoA$, which contains all the 
				identities and is closed under composition. The pair $(\twoA, \Sigma)$ 
				is called a \emph{marked $2$-category}.
			\end{defi}
			
			\begin{rk}
				Clearly, marked $2$-categories are equivalent to enhanced $2$-categories.
			\end{rk}

			\begin{defi}[{\cite[I,2, p.14, (i)]{book:Gray:1974}}]
				\label{def:sigmatrans}
				Let $(\twoA, \Sigma)$ be a marked $2$-category. Let $F, G \colon 
				\twoA \rightrightarrows 
				\twoB$ be $2$-functors. A \emph{marked-lax natural transformation} 
				$\alpha \colon F \to G$ between $F$ and $G$ is a lax natural 
				transformation $\alpha \colon F \to G$ such that for any $f \in 
				\Sigma$, the $2$-component $\alpha_f = 1$. 
			\end{defi}
			
			
			\begin{rk}
				Similarly, we can talk about \emph{marked-pseudo} or 
				\emph{marked-colax} 
				natural transformations.
			\end{rk}
			
			\begin{nota}
				We denote the $2$-category of $2$-functors $\twoA \to \twoB$, 
				marked-lax natural 
				transformations, 
				and modifications by $[\twoA, \twoB]_{l, \Sigma}$. 
			\end{nota}

			\begin{defi}[{\cite[I,7, p.23, (iii)]{book:Gray:1974}, \cite[Definition 
					2.4]{Szyld:2019}}]
				Let $(\twoA, \Sigma)$ be a small marked $2$-category, i.e., $\ob\twoA$ 
				is small, and let $\twoB$ be a $2$-category. The \emph{marked-lax 
					limit} 
				$\sigmalim{l}{F}$ of a $2$-functor $F \colon \twoA \to 
				\twoB$ is 
				characterised by an isomorphism
				\begin{equation}
					\label{def:sigmalim}
					\twoB(B, \sigmalim{l}{F}) \cong [\twoA, \twoB]_{l, 
						\Sigma}(\triangle(B), 
					F)
				\end{equation}
				in $\Cat$, which is natural in $B \in \ob\twoB$.
			\end{defi}
			
			\begin{rk}
				We can talk about marked-colax or marked-pseudo limits, by replacing 
				the marked-lax natural transformations with marked-colax or 
				marked-pseudo natural transformations, respectively.
			\end{rk}
			
			\begin{defi}[{\cite{Ko:2023}}]
				\label{def:marked_F-cat}
				Let $\bbD$ be an $\F$-category. Let $\Sigma$ be a class of morphisms in 
				$\bbD$, which contains all the 
				identities and is closed under composition. The pair $(\bbD, \Sigma)$ 
				is called a \emph{marked $\F$-category}.
			\end{defi}
			
			\begin{defi}[{\cite{Ko:2023}}]
				\label{def:dotted_F-cat}
				Let $(\bbD, \Sigma)$ be a marked $\F$-category. Let $T$ be a 
				collection 
				of objects in $\bbD$ such that if $a \in T$ and there is a 
				tight morphism $a \nrightarrow b$ in $\Sigma$, then $b \in 
				T$. The triple $(\bbD, \Sigma, \Gamma)$ 
				is called a \emph{dotted $\F$-category}.
			\end{defi}
			
			\begin{nota}
				For an object $x$ in $\Gamma$, we highlight it in diagrams with a dot above: 
				$\dot{x}$.
			\end{nota}
			
			\begin{defi}[{\cite{Ko:2023}}]
				\label{def:dottedtrans}
				Let $(\bbD, \Sigma, \Gamma)$ be a dotted $\F$-category. Let $S, R 
				\colon \bbD \rightrightarrows \bbA$ be two $\F$-functors. A 
				\emph{dotted(-marked)-lax natural transformation}  $\alpha \colon S \to 
				R$ 
				between $S$ and $R$ is a marked-lax natural transformation 
				$\alpha_\lambda \colon 
				S_\lambda \to R_\lambda$ such that and for any ${x} 
				\in \Gamma$, 
				the $1$-component $\alpha_{{x}}$ is a tight morphism in $\bbA$.
			\end{defi}

			\begin{rk}
				We also have about \emph{dotted-colax} or 
				\emph{dotted-pseudo} 
				natural transformations.
			\end{rk}
			
			\begin{nota}
				We denote the above $\F$-category of $\F$-functors $\bbD \to \bbA$, lax 
				natural 
				transformations between loose parts and dotted-lax natural 
				transformations, 
				and modifications by $[\bbD, \bbA]_{l, \Sigma, \Gamma}$.
			\end{nota}

			\begin{defi}[{\cite{Ko:2023}}]
				\label{def:dotted_lim}
				Let $(\bbD, \Sigma, T)$ be a small dotted $\F$-category, and 
				$\bbA$ be an $\F$-category. The \emph{dotted(-marked)-lax limit} 
				$\dotlim{l}{S}$ of an $\F$-functor $S 
				\colon \bbD \to 
				\bbA$ is 
				characterised by an isomorphism
				\begin{equation}
					\label{def:dotlim}
					\bbA(A, \dotlim{l}{S}) \cong [\bbD, \bbA]_{l, 
						\Sigma, \Gamma}(\triangle(A), 
					S)
				\end{equation}
				in $\bbF$, which is natural in $A \in \ob\bbA$.
			\end{defi}
			
			\begin{rk}
				By replacing dotted-lax natural transformations with dotted-colax or 
				dotted-pseudo natural transformations, we obtain the notions of 
				dotted-colax or dotted-pseudo limits, respectively.
			\end{rk}
			
			\begin{example}[$w$-limits of loose morphisms]
				\label{eg:w-lim_loose_arr}
				Let
				\[
				\bbD := \{
				\begin{tikzcd}[ampersand replacement=\&]
					{\dot{A}} \& {\dot{B}}
					\arrow["f", loose, from=1-1, to=1-2]
				\end{tikzcd}
				\}
				\]
				be the dotted $\F$-category with two dotted objects and only one non-identity (loose) morphism between them.
				
				Then, the dotted $w$-limit of an $\F$-functor $S \colon \bbD \to \bbA$ is the $w$-limit of the loose morphism $SA \leadsto SB$ in $\bbA$.
			\end{example}
			
			%
			%
				%
			
			We now present the main technical proposition in this article.
			
			\begin{pro}
				\label{pro:colax_lim_arrow}
				Let $\mathmybb{T}$ be an enhanced limit $2$-sketch with tight weighted cones, and $\bbK$ be a locally presentable enhanced $2$-category.  The enhanced $2$-category $\mathmybb{Mod}_{s, w}(\mathmybb{T}, \mathmybb{K})$ admits $\overline{w}$-limits of loose morphisms.
				%
			\end{pro}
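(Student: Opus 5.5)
The plan is to construct the $\overline{w}$-limit of a loose morphism $\phi \colon M \leadsto N$ in $\FMod{s, w}(\mathmybb{T}, \bbK)$ pointwise, as a dotted $2$-limit in $\bbK$, and then check that the result is again a model and has the required universal property. Recall that a loose morphism $\phi$ is a loose $w$-natural transformation: a $w$-natural family $\phi_T \colon M(T) \leadsto N(T)$ that is strictly natural on tight morphisms of $\mathmybb{T}$.

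For each object $T \in \mathmybb{T}$, let $L(T)$ be the $\overline{w}$-limit in $\bbK$ of the loose morphism $\phi_T \colon M(T) \leadsto N(T)$. This is the dotted $\overline{w}$-limit of \longref{Example}{eg:w-lim_loose_arr}, and it exists because $\bbK$ is locally presentable, hence complete. It comes with tight projections $p_T \colon L(T) \to M(T)$ and loose projections $q_T \colon L(T) \leadsto N(T)$, together with a $\overline{w}$-cell relating $\phi_T p_T$ and $q_T$.

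\textbf{Step 1: extend $L$ to an $\F$-functor $\mathmybb{T} \to \bbK$.}
\begin{itemize}
\item For a morphism $t \colon T_1 \leadsto T_2$, define $L(t)$ via the universal property. Form the cone with components $M(t)p_{T_1}$ and $N(t)q_{T_1}$; its $2$-cell is obtained by pasting the cone cell of $T_1$ with the $w$-component $\phi_t$.
\item The directions are consistent: a $w$-cell $\phi_t$ pastes with a $\overline{w}$ cone cell to give a $\overline{w}$ cone cell for $T_2$. This is exactly why the limit must be the $\overline{w}$-limit.
\item When $t$ is tight, $\phi_t$ is an identity. Hence $M(t)p_{T_1}$ is tight, and the universal property in $\bbF$ of \longref{Definition}{def:dotted_lim} makes $L(t)$ tight.
\item $2$-cells of $\mathmybb{T}$ are handled by the $2$-dimensional part of the universal property.
\item Functoriality follows from uniqueness.
\end{itemize}

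\textbf{Step 2: assemble the projections into morphisms of models.} The projections give an $\F$-natural $p \colon L \to M$, a loose $w$-natural $q \colon L \leadsto N$, and a modification between $\phi p$ and $q$.

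\textbf{Step 3: show that $L$ is an $\F$-model.} This is the step I expect to be the main obstacle, and it is where tightness of the cones is used.
\begin{itemize}
\item Take a weighted cone $(W, D, s, \gamma)$ of $\mathmybb{T}$ with $W$ and $D$ living in $\mathccal{T}_\tau$. Then $\phi$ restricted along $D$ is a genuinely $2$-natural transformation $MD \to ND$.
\item So $\phi_s$ is identified, via the limit cones $M(s) \cong \{W, MD\}$ and $N(s) \cong \{W, ND\}$, with the morphism $\{W, \phi D\}$ induced on limits.
\item I would then apply a limits-commute-with-limits argument. The dotted $\overline{w}$-limit of a loose arrow is itself an $\F$-weighted limit for a suitable weight. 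Hence
$$L(s) = \overline{w}\text{-}\mathrm{lim}(\phi_s) \cong \{W, \overline{w}\text{-}\mathrm{lim}(\phi_{D-})\} = \{W, LD\}.$$
\item I must check that this isomorphism agrees with the canonical comparison map $\rho$ of \longref{Definition}{def:model_wrt_R}. This follows by checking against the projections.
\item The delicate point is writing the $\overline{w}$-limit of a loose arrow as a weighted limit whose weight interacts correctly with the tight diagram $D$. Concretely, I would verify both sides represent the same functor $\bbK(K,-)$ in $\bbF$, which reduces to the commutation of ends.
\end{itemize}

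\textbf{Step 4: verify the universal property in $\FMod{s, w}(\mathmybb{T}, \bbK)$.}
\begin{itemize}
\item Let $X$ be a model, with a loose $w$-natural $a \colon X \leadsto M$, a loose $w$-natural $b \colon X \leadsto N$, and a modification forming a $\overline{w}$-cone.
\item Pointwise these induce $u_T \colon X(T) \leadsto L(T)$. The cells $u_t$ are induced by the $2$-dimensional universal property from $a_t$ and $b_t$, so $u$ is $w$-natural.
\item For tight $t$, $a_t$ and $b_t$ are identities, so $u_t$ is an identity by uniqueness.
\item If $a$ is tight (i.e.\ $\F$-natural), each $u_T$ is tight, again by the $\bbF$-enriched universal property.
\item Modifications are induced similarly, with uniqueness throughout.
\end{itemize}
This yields the required isomorphism in $\bbF$, and hence the $\overline{w}$-limit of $\phi$ in $\FMod{s, w}(\mathmybb{T}, \bbK)$.
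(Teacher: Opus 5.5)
Your Steps 1, 2 and 4 match the paper's argument. $L_T$ is the dotted limit of $\phi_T$ in $\bbK$, and $L(t)$ is induced by pasting the limit cell with $\phi_t$, so it is tight when $t$ is tight. The universal property in $\FMod{s,w}(\mathmybb{T},\bbK)$ is then checked objectwise, with $u_t$ and the induced modifications coming from the $2$-dimensional universal property of each $L_T$.

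The genuine difference is Step 3.
\begin{itemize}
\item The paper does not pass through weighted limits here. It uses Proposition 5.3.0.8 of \cite{Ko:2023} to turn the model cones into dotted-lax limit cones over $\bbEl{\Phi}$. It then proves by hand that $L\cdot\delta$ is again a dotted-lax limit cone: it builds the $\F$-functor $\mathrm{ev}_{D(-)}\cdot S$ on $\calJ\times\bbD$ and the transformation $\eta^{D(-)}_{(-)}$, establishes \eqref{eqt:equal_trans}, and checks both universal properties diagrammatically.
\item Your route is valid and much shorter. The $\overline{w}$-limit of a loose arrow is an $\F$-weighted limit $\{\Psi,-\}$ for a weight $\Psi\colon\bbD\to\bbF$. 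The diagram sending $(j,x)$ to $MDj$ or $NDj$ is an $\F$-functor on $\calJ\times\bbD$ because $\phi D$ is strictly $2$-natural. Enriched Fubini then gives $\{\Psi,\{W,-\}\}\cong\{W,\{\Psi,-\}\}$.
\item The paper's $\mathrm{ev}_{D(-)}\cdot S$ is exactly your diagram, and its long computation is this instance of Fubini unwound in the dotted-lax language.
\item The paper's version stays inside the dotted-limit framework it already needs for the universal property. Yours is conceptual and isolates exactly where tightness enters.
\end{itemize}

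You should make that tightness input explicit, because your ``So'' hides it. Identifying $\phi_s$ with $\{W,\phi D\}$ requires $N(\gamma(x))\cdot\phi_s=\phi_{Dj}\cdot M(\gamma(x))$, and the analogous equation on $2$-cells, for every object $x$ of $(Wj)_\lambda$. So the cone legs $\gamma(x)\colon s\to Dj$ must be tight, not just $D$; ``$W$ living in $\mathccal{T}_\tau$'' does not parse, since $W$ is $\bbF$-valued. This is the same strong reading of ``tight cones'' the paper uses when it asserts that every object of $\bbEl{\Phi}$ is dotted.

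Separately, correct one slip about the projections. In \longref{Example}{eg:w-lim_loose_arr} both objects are dotted, so $q_T$ is tight and $p_T,q_T$ jointly detect tightness. By your own construction of $L(t)$, $q$ is then strictly natural with tight components, i.e.\ $\F$-natural. In Step 4, $u$ is tight if and only if both $a$ and $b$ are tight. As written (``$q_T$ loose'', ``if $a$ is tight then $u_T$ is tight''), you would be proving the universal property of a different limit, one in which only the domain object is dotted.
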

			
			\begin{proof}
				For simplicity, we prove the statement for the case when $w = c$. 
				
				Let $S \colon \bbD \to \mathmybb{Fun}_{s, c}(\mathmybb{T}, \mathmybb{K})$ be an $\F$-functor, where $\bbD$ is the enhanced $2$-category in \longref{Example}{eg:w-lim_loose_arr}. We first show that the dotted-lax limit $\dotlim{l}{S}$ exists in $\mathmybb{Fun}_{s, c}(\mathmybb{T}, \mathmybb{K})$, and is preserved by $\mathrm{ev}_T$ at $T$, for any object $T$ of $\mathmybb{T}$.
				
				\noindent\textbf{Construction of $\boldsymbol{L}$ as the limit:}
				
				For any object $T$ of $\mathmybb{T}$, write $L_T := \dotlim{l}{\mathrm{ev}_T \cdot S}$ as the dotted-lax limit of the composite
				$$\bbD \xrightarrow{S} \mathmybb{Fun}_{s, c}(\mathmybb{T}, \mathmybb{K}) \xrightarrow{\mathrm{ev}_T} \bbK$$
				in $\bbK$. Let $\eta^T \colon \triangle(L_T) \to {\mathrm{ev}_T \cdot S}$ be the unit for this dotted limit, which is clearly a dotted-lax natural transformation. Then, the $2$-component $\eta^T_f$ at $f \colon \dot{A} \leadsto \dot{B}$ in $\bbD$ is given by
				\[
				\begin{tikzcd}[ampersand replacement=\&]
					\& {L_T} \& \\
					{(S(\dot{A}))(T)} \&\& {(S(\dot{B}))(T)}
					\arrow["{\eta^T_{\dot{A}}}"', from=1-2, to=2-1]
					\arrow[""{name=0, anchor=center, inner sep=0}, "{\eta^T_{\dot{B}}}", from=1-2, to=2-3]
					\arrow["{S(f)_T}"', loose, from=2-1, to=2-3]
					\arrow["{\eta^T_f}", between={0.4}{0.6}, Rightarrow, from=2-1, to=0]
				\end{tikzcd}
				\]
				in $\bbK$.
				
				Suppose $T_1$ and $T_2$ are objects of $\mathmybb{T}$, and $t \colon T_1 \leadsto T_2$ is a loose morphism in $\mathmybb{T}$. The $2$-component of the loose colax natural transformation $S(f)$ at $t$ is a $2$-morphism
				\[
				\begin{tikzcd}[ampersand replacement=\&]
					{(S(\dot{A}))(T_1)} \& {(S(\dot{B}))(T_1)} \\
					{(S(\dot{A}))(T_2)} \& {(S(\dot{B}))(T_2)}
					\arrow["{S(f)_{T_1}}", loose, from=1-1, to=1-2]
					\arrow["{(S(\dot{A}))(t)}"', loose, from=1-1, to=2-1]
					\arrow["{(S(\dot{B}))(t)}", loose, from=1-2, to=2-2]
					\arrow["{S(f)_t}"', between={0.3}{0.7}, Rightarrow, from=2-1, to=1-2]
					\arrow["{S(f)_{T_2}}"', loose, from=2-1, to=2-2]
				\end{tikzcd}
				\]
				in $\bbK$. So, together, we have a cone $\theta \colon \triangle(L_{T_1}) \to {\mathrm{ev}_{T_2} \cdot S}$, whose $2$-component at $f$ is given by
				\[
				\begin{tikzcd}[ampersand replacement=\&]
					\& {L_{T_1}} \& \\
					{(S(\dot{A}))(T_1)} \&\& {(S(\dot{B}))(T_1)} \\
					{(S(\dot{A}))(T_2)} \&\& {(S(\dot{B}))(T_2)}
					\arrow["{\eta^{T_1}_{\dot{A}}}"', from=1-2, to=2-1]
					\arrow[""{name=0, anchor=center, inner sep=0}, "{\eta^{T_1}_{\dot{B}}}", from=1-2, to=2-3]
					\arrow["{S(f)_{T_1}}"{description}, loose, from=2-1, to=2-3]
					\arrow["{(S(\dot{A}))(t)}"', loose, from=2-1, to=3-1]
					\arrow["{(S(\dot{B}))(t)}", loose, from=2-3, to=3-3]
					\arrow["{S(f)_t}"', between={0.4}{0.6}, Rightarrow, from=3-1, to=2-3]
					\arrow["{S(f)_{T_2}}"', loose, from=3-1, to=3-3]
					\arrow["{\eta^{T_1}_f}", between={0.4}{0.6}, Rightarrow, from=2-1, to=0]
				\end{tikzcd},
				\]
				which is clearly a marked-lax natural transformation. Note that
				\[
				\begin{tikzcd}[ampersand replacement=\&]
					\& {L_{T_2}} \& \\
					{(S(\dot{A}))(T_2)} \&\& {(S(\dot{B}))(T_2)}
					\arrow["{\eta^{T_2}_{\dot{A}}}"', from=1-2, to=2-1]
					\arrow[""{name=0, anchor=center, inner sep=0}, "{\eta^{T_2}_{\dot{B}}}", from=1-2, to=2-3]
					\arrow["{S(f)_{T_2}}"', loose, from=2-1, to=2-3]
					\arrow["{\eta^{T_2}_f}", between={0.4}{0.6}, Rightarrow, from=2-1, to=0]
				\end{tikzcd}
				\]
				is the dotted-lax limit cone for $\dotlim{l}{\mathrm{ev}_{T_2} \cdot S}$. By the universal property, there exists a unique morphism $L_t \colon L_{T_1} \leadsto L_{T_2}$ such that
				\begin{equation}
					\label{eqt:uni_L_t}
					\begin{tikzcd}[ampersand replacement=\&]
						\& {L_{T_1}} \& \\
						\& {L_{T_2}} \\
						{(S(\dot{A}))(T_2)} \&\& {(S(\dot{B}))(T_2)}
						\arrow["{\exists ! L_t}", dashed, loose, from=1-2, to=2-2]
						\arrow["{\eta^{T_2}_{\dot{A}}}"', from=2-2, to=3-1]
						\arrow[""{name=0, anchor=center, inner sep=0}, "{\eta^{T_2}_{\dot{B}}}", from=2-2, to=3-3]
						\arrow["{S(f)_{T_2}}"', loose, from=3-1, to=3-3]
						\arrow["{\eta^{T_2}_f}", between={0.4}{0.6}, Rightarrow, from=3-1, to=0]
					\end{tikzcd}
					=
					\begin{tikzcd}[ampersand replacement=\&]
						\& {L_{T_1}} \& \\
						{(S(\dot{A}))(T_1)} \&\& {(S(\dot{B}))(T_1)} \\
						{(S(\dot{A}))(T_2)} \&\& {(S(\dot{B}))(T_2)}
						\arrow["{\eta^{T_1}_{\dot{A}}}"', from=1-2, to=2-1]
						\arrow[""{name=0, anchor=center, inner sep=0}, "{\eta^{T_1}_{\dot{B}}}", from=1-2, to=2-3]
						\arrow["{S(f)_{T_1}}"{description}, loose, from=2-1, to=2-3]
						\arrow["{(S(\dot{A}))(t)}"', loose, from=2-1, to=3-1]
						\arrow["{(S(\dot{B}))(t)}", loose, from=2-3, to=3-3]
						\arrow["{S(f)_t}"', between={0.4}{0.6}, Rightarrow, from=3-1, to=2-3]
						\arrow["{S(f)_{T_2}}"', loose, from=3-1, to=3-3]
						\arrow["{\eta^{T_1}_f}", between={0.4}{0.6}, Rightarrow, from=2-1, to=0]
					\end{tikzcd}.
				\end{equation}
				If $t \colon T_1 \to T_2$ is tight, then $\theta$ becomes a dotted-lax natural transformation, and hence $L_t$ is also tight. Also, if $t$ is an identity, then $L_t$ is also an identity. Besides, if we are given a composable pair $T_1 \xleadsto{t} T_2 \xleadsto{t'} T_3$ in $\mathmybb{T}$, then we have
				\begin{align*}
					\begin{tikzcd}[ampersand replacement=\&, column sep =0.75]
						\& {L_{T_1}} \& \\
						\\
						\& {L_{T_3}} \\
						{(S(\dot{A}))(T_3)} \&\& {(S(\dot{B}))(T_3)}
						\arrow["{L_{t't}}", loose, from=1-2, to=3-2]
						\arrow["{\eta^{T_3}_{\dot{A}}}"', from=3-2, to=4-1]
						\arrow[""{name=0, anchor=center, inner sep=0}, "{\eta^{T_3}_{\dot{B}}}", from=3-2, to=4-3]
						\arrow["{S(f)_{T_3}}"', loose, from=4-1, to=4-3]
						\arrow["{\eta^{T_3}_f}", between={0.4}{0.6}, Rightarrow, from=4-1, to=0]
					\end{tikzcd}
					=
					\begin{tikzcd}[ampersand replacement=\&, column sep=0.9]
						\& {L_{T_1}} \& \\
						{(S(\dot{A}))(T_1)} \&\& {(S(\dot{B}))(T_1)} \\
						{(S(\dot{A}))(T_2)} \&\& {(S(\dot{B}))(T_2)} \\
						{(S(\dot{A}))(T_3)} \&\& {(S(\dot{B}))(T_3)}
						\arrow["{\eta^{T_1}_{\dot{A}}}"', from=1-2, to=2-1]
						\arrow[""{name=0, anchor=center, inner sep=0}, "{\eta^{T_1}_{\dot{B}}}", from=1-2, to=2-3]
						\arrow["{S(f)_{T_1}}"', loose, from=2-1, to=2-3]
						\arrow["{(S(\dot{A}))(t)}"', loose, from=2-1, to=3-1]
						\arrow["{(S(\dot{B}))(t)}", loose, from=2-3, to=3-3]
						\arrow["{S(f)_t}"', between={0.4}{0.6}, Rightarrow, from=3-1, to=2-3]
						\arrow["{S(f)_{T_2}}"', loose, from=3-1, to=3-3]
						\arrow["{(S(\dot{A}))(t')}"', loose, from=3-1, to=4-1]
						\arrow["{(S(\dot{B}))(t')}", loose, from=3-3, to=4-3]
						\arrow["{S(f)_{t'}}"', between={0.4}{0.6}, Rightarrow, from=4-1, to=3-3]
						\arrow["{S(f)_{T_3}}"', loose, from=4-1, to=4-3]
						\arrow["{\eta^{T_1}_f}", between={0.4}{0.6}, Rightarrow, from=2-1, to=0]
					\end{tikzcd}
					=
					\begin{tikzcd}[ampersand replacement=\&, column sep =0.75]
						\& {L_{T_1}} \& \\
						\& {L_{T_2}} \\
						\& {L_{T_3}} \\
						{(S(\dot{A}))(T_3)} \&\& {(S(\dot{B}))(T_3)}
						\arrow["{L_t}", loose, from=1-2, to=2-2]
						\arrow["{L_{t'}}", loose, from=2-2, to=3-2]
						\arrow["{\eta^{T_3}_{\dot{A}}}"', from=3-2, to=4-1]
						\arrow[""{name=0, anchor=center, inner sep=0}, "{\eta^{T_3}_{\dot{B}}}", from=3-2, to=4-3]
						\arrow["{S(f)_{T_3}}"', loose, from=4-1, to=4-3]
						\arrow["{\eta^{T_3}_f}", between={0.4}{0.6}, Rightarrow, from=4-1, to=0]
					\end{tikzcd}
				\end{align*}
				which gives $L_{t't} = L_{t'} \cdot L_t$. These give the functoriality of $L$ on $1$-morphisms. Next, suppose
				\[
				\begin{tikzcd}[ampersand replacement=\&]
					{T_1} \&\& {T_2}
					\arrow[""{name=0, anchor=center, inner sep=0}, "t"', curve={height=12pt}, loose, from=1-1, to=1-3]
					\arrow[""{name=1, anchor=center, inner sep=0}, "s", curve={height=-12pt}, loose, from=1-1, to=1-3]
					\arrow["\gamma", between={0.3}{0.7}, Rightarrow, from=1, to=0]
				\end{tikzcd}
				\]
				be a $2$-morphism in $\mathmybb{T}$. From above, $s$ and $t$ induce marked-lax natural transformations $\theta_s$ and $\theta_t$, respectively. We are going to construct a modification $G \colon \theta_s \to \theta_t$. Set $G_{\dot{A}} \colon {\theta_s}_{\dot{A}} \to {\theta_t}_{\dot{A}}$ to be the composite
				\[
				\begin{tikzcd}[ampersand replacement=\&]
					{L_{T_1}} \& {(S(\dot{A}))(T_1)} \&\&\& {(S(\dot{A}))(T_2)}
					\arrow["{\eta^{T_1}_{\dot{A}}}", from=1-1, to=1-2]
					\arrow[""{name=0, anchor=center, inner sep=0}, "{(S(\dot{A}))(t)}"', curve={height=12pt}, loose, from=1-2, to=1-5]
					\arrow[""{name=1, anchor=center, inner sep=0}, "{(S(\dot{A}))(s)}", curve={height=-12pt}, loose, from=1-2, to=1-5]
					\arrow["\scriptstyle{(S(\dot{A}))(\gamma)}", between={0.3}{0.7}, Rightarrow, from=1, to=0]
				\end{tikzcd}.
				\]
				By the naturality of $S(f)$, we have
				\begin{align*}
					\begin{tikzcd}[ampersand replacement=\&]
						\& {L_{T_1}} \& \\
						{(S(\dot{A}))(T_1)} \&\& {(S(\dot{B}))(T_1)} \\
						{(S(\dot{A}))(T_2)} \&\& {(S(\dot{B}))(T_2)}
						\arrow["{\eta^{T_1}_{\dot{A}}}"', from=1-2, to=2-1]
						\arrow[""{name=0, anchor=center, inner sep=0}, "{\eta^{T_1}_{\dot{B}}}", from=1-2, to=2-3]
						\arrow["{S(f)_{T_1}}"{description}, loose, from=2-1, to=2-3]
						\arrow[""{name=1, anchor=center, inner sep=0}, "{(S(\dot{A}))(t)}", curve={height=-12pt}, loose, from=2-1, to=3-1]
						\arrow[""{name=2, anchor=center, inner sep=0}, "{(S(\dot{A}))(s)}"', curve={height=12pt}, loose, from=2-1, to=3-1]
						\arrow["{(S(\dot{B}))(s)}", loose, from=2-3, to=3-3]
						\arrow["{S(f)_s}"', between={0.4}{0.6}, Rightarrow, from=3-1, to=2-3]
						\arrow["{S(f)_{T_2}}"', loose, from=3-1, to=3-3]
						\arrow["{\eta^{T_1}_f}", between={0.4}{0.6}, Rightarrow, from=2-1, to=0]
						\arrow["{{\scalebox{0.5}{$(S(\dot{A}))(\gamma)$}}}"', between={0.2}{0.8}, Rightarrow, from=2, to=1]
					\end{tikzcd}
					=
					\begin{tikzcd}[ampersand replacement=\&]
						\& {L_{T_1}} \& \\
						{(S(\dot{A}))(T_1)} \&\& {(S(\dot{B}))(T_1)} \\
						{(S(\dot{A}))(T_2)} \&\& {(S(\dot{B}))(T_2)}
						\arrow["{\eta^{T_1}_{\dot{A}}}"', from=1-2, to=2-1]
						\arrow[""{name=0, anchor=center, inner sep=0}, "{\eta^{T_1}_{\dot{B}}}", from=1-2, to=2-3]
						\arrow["{S(f)_{T_1}}"{description}, loose, from=2-1, to=2-3]
						\arrow["{(S(\dot{A}))(t)}"', loose, from=2-1, to=3-1]
						\arrow[""{name=1, anchor=center, inner sep=0}, "{(S(\dot{B}))(t)}", curve={height=-12pt}, loose, from=2-3, to=3-3]
						\arrow[""{name=2, anchor=center, inner sep=0}, "{(S(\dot{B}))(s)}"', curve={height=12pt}, loose, from=2-3, to=3-3]
						\arrow["{S(f)_t}"', between={0.4}{0.6}, Rightarrow, from=3-1, to=2-3]
						\arrow["{S(f)_{T_2}}"', loose, from=3-1, to=3-3]
						\arrow["{\eta^{T_1}_f}", between={0.4}{0.6}, Rightarrow, from=2-1, to=0]
						\arrow["{{\scalebox{0.5}{$(S(\dot{B}))(\gamma)$}}}"', between={0.2}{0.8}, Rightarrow, from=2, to=1]
					\end{tikzcd},
				\end{align*}
				so the $1$-components of $G$ satisfy the modification axiom. Now, by the $2$-dimensional universal property, there exists a unique $2$-morphism
				\[
				\begin{tikzcd}[ampersand replacement=\&]
					{L_{T_1}} \&\& {L_{T_2}}
					\arrow[""{name=0, anchor=center, inner sep=0}, "{L_t}"', curve={height=12pt}, loose, from=1-1, to=1-3]
					\arrow[""{name=1, anchor=center, inner sep=0}, "{L_s}", curve={height=-12pt}, loose, from=1-1, to=1-3]
					\arrow["{L_\gamma}", between={0.3}{0.7}, Rightarrow, from=1, to=0]
				\end{tikzcd}
				\]
				in $\bbK$ such that
				\begin{equation}
					\label{eqt:L_gamma}
					\begin{tikzcd}[ampersand replacement=\&]
						\& {L_{T_1}} \& \\
						\& {L_{T_2}} \\
						{(S(\dot{A}))(T_2)} \&\& {(S(\dot{B}))(T_2)}
						\arrow[""{name=0, anchor=center, inner sep=0}, "{L_t}", curve={height=-12pt}, loose, from=1-2, to=2-2]
						\arrow[""{name=1, anchor=center, inner sep=0}, "{L_s}"', curve={height=12pt}, loose, from=1-2, to=2-2]
						\arrow["{\eta^{T_2}_{\dot{A}}}"', from=2-2, to=3-1]
						\arrow[""{name=2, anchor=center, inner sep=0}, "{\eta^{T_2}_{\dot{B}}}", from=2-2, to=3-3]
						\arrow["{S(f)_{T_2}}"', loose, from=3-1, to=3-3]
						\arrow["{\exists!L_\gamma}"', between={0.3}{0.7}, Rightarrow, dashed, from=1, to=0]
						\arrow["{\eta^{T_2}_f}", between={0.4}{0.6}, Rightarrow, from=3-1, to=2]
					\end{tikzcd}
					=
					\begin{tikzcd}[ampersand replacement=\&]
						\& {L_{T_1}} \& \\
						{(S(\dot{A}))(T_1)} \&\& {(S(\dot{B}))(T_1)} \\
						{(S(\dot{A}))(T_2)} \&\& {(S(\dot{B}))(T_2)}
						\arrow["{\eta^{T_1}_{\dot{A}}}"', from=1-2, to=2-1]
						\arrow[""{name=0, anchor=center, inner sep=0}, "{\eta^{T_1}_{\dot{B}}}", from=1-2, to=2-3]
						\arrow["{S(f)_{T_1}}"{description}, loose, from=2-1, to=2-3]
						\arrow[""{name=1, anchor=center, inner sep=0}, "{(S(\dot{A}))(t)}", curve={height=-12pt}, loose, from=2-1, to=3-1]
						\arrow[""{name=2, anchor=center, inner sep=0}, "{(S(\dot{A}))(s)}"', curve={height=12pt}, loose, from=2-1, to=3-1]
						\arrow["{(S(\dot{B}))(s)}", loose, from=2-3, to=3-3]
						\arrow["{S(f)_s}"', between={0.4}{0.6}, Rightarrow, from=3-1, to=2-3]
						\arrow["{S(f)_{T_2}}"', loose, from=3-1, to=3-3]
						\arrow["{\eta^{T_1}_f}", between={0.4}{0.6}, Rightarrow, from=2-1, to=0]
						\arrow["{{\scalebox{0.5}{$(S(\dot{A}))(\gamma)$}}}"', between={0.2}{0.8}, Rightarrow, from=2, to=1]
					\end{tikzcd}.
				\end{equation}
				The functoriality on $2$-morphisms follows similarly. Altogether, we obtain an $\F$-functor $L \colon \mathmybb{T} \to \bbK$.
				
				We then check that $L \cong \dotlim{l}{S}$ in $\mathmybb{Fun}_{s, c}(\mathmybb{T}, \mathmybb{K})$.
				
				\noindent\textbf{$\boldsymbol{1}$-dimensional universal property of $\boldsymbol{L}$:}
				
				First, note that $\{\eta^T\}_{T \in \ob\mathmybb{T}}$ assembles to a dotted-lax natural transformation $\eta \colon \triangle{L} \to S$, which is a tight morphism in $[\bbD, \mathmybb{Fun}_{s, c}(\mathmybb{T}, \mathmybb{K})]_{s, \Sigma, \Gamma}$. In fact, for any (dotted) object $\dot{A}$ of $\bbD$, we have an $\F$-natural transformation $\eta_{\dot{A}} \colon L \to S(\dot{A})$, whose $1$-component is given by
				$$\eta^T_{\dot{A}} \colon L_T \to (S(\dot{A}))(T),$$
				and $2$-component $\eta^t_{A}$ for a morphism $t \colon T_1 \leadsto T_2$ given by
				\begin{equation}
					\label{diag:eta_X}
					\begin{tikzcd}[ampersand replacement=\&]
						{L_{T_1}} \& {(S(\dot{A}))(T_1)} \\
						{L_{T_2}} \& {(S(\dot{A}))(T_2)}
						\arrow["{\eta^{T_1}_{\dot{A}}}", from=1-1, to=1-2]
						\arrow["{L_t}"', loose, from=1-1, to=2-1]
						\arrow["{\eta^t_{\dot{A}}}"{description}, draw=none, from=1-2, to=2-1]
						\arrow["{(S(\dot{A}))(t)}", loose, from=1-2, to=2-2]
						\arrow["{\eta^{T_2}_{\dot{A}}}"', from=2-1, to=2-2]
					\end{tikzcd}.
				\end{equation}
				Note that the commutativity of this square is guaranteed by the universality of $L_t$, as in \longref{Equation}{eqt:uni_L_t}. For the unique non-identity morphism $f \colon \dot{A} \leadsto \dot{B}$ in $\bbD$, we have a modification, i.e., a $2$-morphism
				\[
				\begin{tikzcd}[ampersand replacement=\&]
					\& L \& \\
					{S(\dot{A})} \&\& {S(\dot{B})}
					\arrow["{\eta_{\dot{A}}}"', from=1-2, to=2-1]
					\arrow[""{name=0, anchor=center, inner sep=0}, "{\eta_{\dot{B}}}", from=1-2, to=2-3]
					\arrow["{S(f)}"', loose, from=2-1, to=2-3]
					\arrow["{\eta_f}"', between={0.3}{0.6}, Rightarrow, from=2-1, to=0]
				\end{tikzcd}
				\]
				in $\mathmybb{Fun}_{s, c}(\mathmybb{T}, \mathmybb{K})$, whose $1$-components, given by $\eta^T_f$ for $T \in \ob\mathmybb{T}$, clearly satisfy the modification axiom
				\begin{equation*}
					\begin{tikzcd}[ampersand replacement=\&, column sep=0.7]
						\& {L_{T_2}} \&\& {L_{T_1}} \\
						{(S(\dot{A}))(T_2)} \&\& {(S(\dot{A}))(T_1)} \\
						\& {(S(\dot{B}))(T_2)} \&\& {(S(\dot{B}))(T_1)}
						\arrow["{\eta^{T_2}_{\dot{A}}}"', from=1-2, to=2-1]
						\arrow["{\eta^t_{\dot{A}}}"{description}, draw=none, from=1-2, to=2-3]
						\arrow["{L_t}"', loose, from=1-4, to=1-2]
						\arrow["{\eta^{T_1}_{\dot{A}}}"', from=1-4, to=2-3]
						\arrow[""{name=0, anchor=center, inner sep=0}, "{\eta^{T_1}_{\dot{B}}}", from=1-4, to=3-4]
						\arrow["{S(f)_{T_2}}"', loose, from=2-1, to=3-2]
						\arrow["{S(f)_t}"', between={0.4}{0.6}, Rightarrow, from=2-1, to=3-4]
						\arrow["{(S(\dot{A}))(t)}"', loose, from=2-3, to=2-1]
						\arrow["{S(f)_{T_1}}"{description}, loose, from=2-3, to=3-4]
						\arrow["{(S(\dot{B}))(t)}", loose, from=3-4, to=3-2]
						\arrow["{\eta^{T_1}_f}", between={0.3}{0.7}, Rightarrow, from=2-3, to=0]
					\end{tikzcd}
					=
					\begin{tikzcd}[ampersand replacement=\&, column sep = 15]
						\& {L_{T_2}} \&\& {L_{T_1}} \\
						{(S(\dot{A}))(T_2)} \\
						\& {(S(\dot{B}))(T_2)} \&\& {(S(\dot{B}))(T_1)}
						\arrow["{\eta^{T_2}_{\dot{A}}}"', from=1-2, to=2-1]
						\arrow[""{name=0, anchor=center, inner sep=0}, "{\eta^{T_2}_{\dot{B}}}", from=1-2, to=3-2]
						\arrow["{\eta^t_{\dot{B}}}"{description}, draw=none, from=1-2, to=3-4]
						\arrow["{L_t}"', loose, from=1-4, to=1-2]
						\arrow["{\eta^{T_1}_{\dot{B}}}", from=1-4, to=3-4]
						\arrow["{S(f)_{T_2}}"', loose, from=2-1, to=3-2]
						\arrow["{(S(\dot{B}))(t)}", loose, from=3-4, to=3-2]
						\arrow["{\eta^{T_2}_f}", between={0.4}{0.7}, Rightarrow, from=2-1, to=0]
					\end{tikzcd}
				\end{equation*}
				by \longref{Equation}{eqt:uni_L_t}. The naturality of $\eta \colon \triangle(L) \to S$ is inherited from that of $\eta^T \colon \triangle(L_T) \to {\mathrm{ev}_T \cdot S}$. We then proceed to show that $\eta$ exhibits the universal property of a dotted-lax limit cone.
				
				Suppose there is a marked-lax natural transformation $\zeta \colon \triangle{X} \to S$, where $X$ is an $\F$-functor $\mathmybb{T} \to \bbK$. For the morphism $f \colon \dot{A} \leadsto \dot{B}$ in $\bbD$, $\zeta_f$ is given by the $2$-morphism
				\[
				\begin{tikzcd}[ampersand replacement=\&]
					\& X \& \\
					{S(\dot{A})} \&\& {S(\dot{B})}
					\arrow["{\zeta_{\dot{A}}}"', loose, from=1-2, to=2-1]
					\arrow[""{name=0, anchor=center, inner sep=0}, "{\zeta_{\dot{B}}}", loose, from=1-2, to=2-3]
					\arrow["{S(f)}"', loose, from=2-1, to=2-3]
					\arrow["{\zeta_f}"', between={0.3}{0.6}, Rightarrow, from=2-1, to=0]
				\end{tikzcd}
				\]
				in $\mathmybb{Fun}_{s, c}(\mathmybb{T}, \mathmybb{K})$. Applying the evaluation $\mathrm{ev}_T$ at $T \in \ob\mathmybb{T}$, we obtain a $2$-morphism
				\[
				\begin{tikzcd}[ampersand replacement=\&]
					\& {X(T)} \& \\
					{(S(\dot{A}))(T)} \&\& {(S(\dot{B}))(T)}
					\arrow["{\zeta^T_{\dot{A}}}"', loose, from=1-2, to=2-1]
					\arrow[""{name=0, anchor=center, inner sep=0}, "{\zeta^T_{\dot{B}}}", loose, from=1-2, to=2-3]
					\arrow["{S(f)_T}"', loose, from=2-1, to=2-3]
					\arrow["{\zeta^T_f}"', between={0.3}{0.6}, Rightarrow, from=2-1, to=0]
				\end{tikzcd}
				\]
				in $\bbK$. Now, by the universal property of $L_T = \dotlim{l}{\mathrm{ev}_T \cdot S}$, there exists a unique $1$-morphism $\chi_T \colon X(T) \leadsto L_T$ such that
				\begin{equation*}
					\begin{tikzcd}[ampersand replacement=\&]
						\& {X(T)} \& \\
						\& {L_T} \\
						{(S(\dot{A}))(T)} \&\& {(S(\dot{B}))(T)}
						\arrow["{\exists!\chi_T}", dashed, loose, from=1-2, to=2-2]
						\arrow["{\eta^T_{\dot{A}}}"', from=2-2, to=3-1]
						\arrow[""{name=0, anchor=center, inner sep=0}, "{\eta^T_{\dot{B}}}", from=2-2, to=3-3]
						\arrow["{S(f)_T}"', loose, from=3-1, to=3-3]
						\arrow["{\eta^T_f}", between={0.4}{0.6}, Rightarrow, from=3-1, to=0]
					\end{tikzcd}
					=
					\begin{tikzcd}[ampersand replacement=\&]
						\& {X(T)} \& \\
						\\
						{(S(\dot{A}))(T)} \&\& {(S(\dot{B}))(T)}
						\arrow["{\zeta^T_{\dot{A}}}"', loose, from=1-2, to=3-1]
						\arrow[""{name=0, anchor=center, inner sep=0}, "{\zeta^T_{\dot{B}}}", loose, from=1-2, to=3-3]
						\arrow["{S(f)_T}"', loose, from=3-1, to=3-3]
						\arrow["{\zeta^T_f}"', between={0.3}{0.6}, Rightarrow, from=3-1, to=0]
					\end{tikzcd}.
				\end{equation*}
				Let $t \colon T_1 \leadsto T_2$ be a morphism in $\mathmybb{T}$. Consider the composite, denoted as $\mu_f$,
				\[
				\begin{tikzcd}[ampersand replacement=\&]
					\& {X(T_1)} \& \\
					\& {X(T_2)} \\
					{(S(\dot{A}))(T_2)} \&\& {(S(\dot{B}))(T_2)}
					\arrow["{X(t)}", loose, from=1-2, to=2-2]
					\arrow["{\zeta^{T_2}_{\dot{A}}}"', loose, from=2-2, to=3-1]
					\arrow[""{name=0, anchor=center, inner sep=0}, "{\zeta^{T_2}_{\dot{B}}}", loose, from=2-2, to=3-3]
					\arrow["{S(f)_{T_2}}"', loose, from=3-1, to=3-3]
					\arrow["{\zeta^{T_2}_f}"', between={0.3}{0.6}, Rightarrow, from=3-1, to=0]
				\end{tikzcd},
				\]
				which clearly induces a marked-lax natural transformation $\mu \colon \triangle(X(T_1)) \to \mathrm{ev}_{T_2} \cdot S$, whose $1$-component at $\dot{A}$ is given by $\mu_A := \zeta^{T_2}_{\dot{A}} \cdot X(t)$, and $2$-component is given by $\mu_f$. We then have 
				\begin{equation*}
					\begin{tikzcd}[ampersand replacement=\&]
						\& {X(T_1)} \& \\
						\& {X(T_2)} \\
						\\
						{(S(\dot{A}))(T_2)} \&\& {(S(\dot{B}))(T_2)}
						\arrow["{X(t)}", loose, from=1-2, to=2-2]
						\arrow["{\zeta^{T_2}_{\dot{A}}}"', loose, from=2-2, to=4-1]
						\arrow[""{name=0, anchor=center, inner sep=0}, "{\zeta^{T_2}_{\dot{B}}}", loose, from=2-2, to=4-3]
						\arrow["{S(f)_{T_2}}"', loose, from=4-1, to=4-3]
						\arrow["{\zeta^{T_2}_f}"', between={0.3}{0.6}, Rightarrow, from=4-1, to=0]
					\end{tikzcd}
					=
					\begin{tikzcd}[ampersand replacement=\&]
						\& {X(T_1)} \& \\
						\& {X(T_2)} \\
						\& {L_{T_2}} \\
						{(S(\dot{A}))(T_2)} \&\& {(S(\dot{B}))(T_2)}
						\arrow["{X(t)}", loose, from=1-2, to=2-2]
						\arrow["{\chi_{T_2}}", loose, from=2-2, to=3-2]
						\arrow["{\eta^{T_2}_{\dot{A}}}"', from=3-2, to=4-1]
						\arrow[""{name=0, anchor=center, inner sep=0}, "{\eta^{T_2}_{\dot{B}}}", from=3-2, to=4-3]
						\arrow["{S(f)_{T_2}}"', loose, from=4-1, to=4-3]
						\arrow["{\eta^{T_2}_f}", between={0.4}{0.6}, Rightarrow, from=4-1, to=0]
					\end{tikzcd}.
				\end{equation*}
				Next, consider the composite, denoted as $\lambda_f$,
				\[
				\begin{tikzcd}[ampersand replacement=\&]
					\& {X(T_1)} \& \\
					{(S(\dot{A}))(T_1)} \&\& {(S(\dot{B}))(T_1)} \\
					{(S(\dot{A}))(T_2)} \&\& {(S(\dot{B}))(T_2)}
					\arrow["{\zeta^{T_1}_{\dot{A}}}"', loose, from=1-2, to=2-1]
					\arrow[""{name=0, anchor=center, inner sep=0}, "{\zeta^{T_1}_{\dot{B}}}", loose, from=1-2, to=2-3]
					\arrow["{S(f)_{T_1}}"{description}, loose, from=2-1, to=2-3]
					\arrow["{(S(\dot{A}))(t)}"', loose, from=2-1, to=3-1]
					\arrow["{(S(\dot{B}))(t)}", loose, from=2-3, to=3-3]
					\arrow["{S(f)_t}"', between={0.4}{0.6}, Rightarrow, from=3-1, to=2-3]
					\arrow["{S(f)_{T_2}}"', loose, from=3-1, to=3-3]
					\arrow["{\zeta^{T_1}_f}", between={0.4}{0.6}, Rightarrow, from=2-1, to=0]
				\end{tikzcd}
				\]
				for $f \in \bbD$, which clearly induces a marked-lax natural transformation $\lambda \colon \triangle(X(T_1)) \to \mathrm{ev}_{T_2} \cdot S$, whose $1$-component at $\dot{A}$ is given by $\lambda_A \colon (S(\dot{A}))(t) \cdot \zeta^{T_1}_{\dot{A}}$, and $2$-component is given by $\lambda_f$. We have
				\begin{equation}
					\label{eqt:lambda}
					\begin{aligned}
						\begin{tikzcd}[ampersand replacement=\&, column sep=10]
							\& {X(T_1)} \& \\
							\\
							{(S(\dot{A}))(T_1)} \&\& {(S(\dot{B}))(T_1)} \\
							{(S(\dot{A}))(T_2)} \&\& {(S(\dot{B}))(T_2)}
							\arrow["{\zeta^{T_1}_{\dot{A}}}"', loose, from=1-2, to=3-1]
							\arrow[""{name=0, anchor=center, inner sep=0}, "{\zeta^{T_1}_{\dot{B}}}", loose, from=1-2, to=3-3]
							\arrow["{S(f)_{T_1}}"{description}, loose, from=3-1, to=3-3]
							\arrow["{(S(\dot{A}))(t)}"', loose, from=3-1, to=4-1]
							\arrow["{(S(\dot{B}))(t)}", loose, from=3-3, to=4-3]
							\arrow["{S(f)_t}"', between={0.4}{0.6}, Rightarrow, from=4-1, to=3-3]
							\arrow["{S(f)_{T_2}}"', loose, from=4-1, to=4-3]
							\arrow["{\zeta^{T_1}_f}", between={0.4}{0.6}, Rightarrow, from=3-1, to=0]
						\end{tikzcd}
						&=
						\begin{tikzcd}[ampersand replacement=\&, column sep=10]
							\& {X(T_1)} \& \\
							\& {L_{T_1}} \\
							{(S(\dot{A}))(T_1)} \&\& {(S(\dot{B}))(T_1)} \\
							{(S(\dot{A}))(T_2)} \&\& {(S(\dot{B}))(T_2)}
							\arrow["{\chi_{T_1}}", loose, from=1-2, to=2-2]
							\arrow["{\eta^{T_1}_{\dot{A}}}"', from=2-2, to=3-1]
							\arrow[""{name=0, anchor=center, inner sep=0}, "{\eta^{T_1}_{\dot{B}}}", from=2-2, to=3-3]
							\arrow["{S(f)_{T_1}}"{description}, loose, from=3-1, to=3-3]
							\arrow["{(S(\dot{A}))(t)}"', loose, from=3-1, to=4-1]
							\arrow["{(S(\dot{B}))(t)}", loose, from=3-3, to=4-3]
							\arrow["{S(f)_t}"', between={0.4}{0.6}, Rightarrow, from=4-1, to=3-3]
							\arrow["{S(f)_{T_2}}"', loose, from=4-1, to=4-3]
							\arrow["{\eta^{T_1}_f}", between={0.4}{0.6}, Rightarrow, from=3-1, to=0]
						\end{tikzcd}
						\\
						&=
						\begin{tikzcd}[ampersand replacement=\&]
							\& {X(T_1)} \& \\
							\& {L_{T_1}} \\
							\& {L_{T_2}} \\
							{(S(\dot{A}))(T_2)} \&\& {(S(\dot{B}))(T_2)}
							\arrow["{\chi_{T_1}}", loose, from=1-2, to=2-2]
							\arrow["{L_t}", loose, from=2-2, to=3-2]
							\arrow["{\eta^{T_2}_{\dot{A}}}"', from=3-2, to=4-1]
							\arrow[""{name=0, anchor=center, inner sep=0}, "{\eta^{T_2}_{\dot{B}}}", from=3-2, to=4-3]
							\arrow["{S(f)_{T_2}}"', loose, from=4-1, to=4-3]
							\arrow["{\eta^{T_2}_f}", between={0.4}{0.6}, Rightarrow, from=4-1, to=0]
						\end{tikzcd}.
					\end{aligned}
				\end{equation}
				We are going to construct a modification $\lambda \to \mu$. Indeed, as $\zeta_{\dot{A}}$ is a loose colax natural transformation, we have the $2$-component
				\[
				\begin{tikzcd}[ampersand replacement=\&]
					{X(T_1)} \& {(S(\dot{A}))(T_1)} \\
					{X(T_2)} \& {(S(\dot{A}))(T_2)}
					\arrow["{\zeta^{T_1}_{\dot{A}}}", loose, from=1-1, to=1-2]
					\arrow["{X(t)}"', loose, from=1-1, to=2-1]
					\arrow["{(S(\dot{A}))(t)}", loose, from=1-2, to=2-2]
					\arrow["{\zeta^{t}_{\dot{A}}}"', between={0.3}{0.7}, Rightarrow, from=2-1, to=1-2]
					\arrow["{\zeta^{T_2}_{\dot{A}}}"', loose, from=2-1, to=2-2]
				\end{tikzcd}
				\]
				of $\zeta_{\dot{A}}$ at $t$ as a $2$-morphism in $\bbK$, and since $\zeta_f$ is a modification by definition, 
				\begin{equation}
					\label{eqt:zeta_f}
					\begin{tikzcd}[ampersand replacement=\&, column sep=small]
						\& {X(T_2)} \& {X(T_1)} \\
						{(S(\dot{A}))(T_2)} \\
						\& {(S(\dot{B}))(T_2)} \& {(S(\dot{B}))(T_1)}
						\arrow["{\zeta^{T_2}_{\dot{A}}}"', loose, from=1-2, to=2-1]
						\arrow[""{name=0, anchor=center, inner sep=0}, "{\zeta^{T_2}_{\dot{B}}}"{description}, loose, from=1-2, to=3-2]
						\arrow["{\zeta^t_{\dot{B}}}", between={0.4}{0.6}, Rightarrow, from=1-2, to=3-3]
						\arrow["{X(t)}"', loose, from=1-3, to=1-2]
						\arrow["{\zeta^{T_1}_{\dot{B}}}", loose, from=1-3, to=3-3]
						\arrow["{S(f)_{T_2}}"', loose, from=2-1, to=3-2]
						\arrow["{(S(\dot{B}))(t)}", loose, from=3-3, to=3-2]
						\arrow["{\zeta^{T_2}_f}"', between={0.3}{0.7}, Rightarrow, from=2-1, to=0]
					\end{tikzcd}
					=
					\begin{tikzcd}[ampersand replacement=\&]
						\& {X(T_2)} \& {X(T_1)} \\
						{(S(\dot{A}))(T_2)} \& {(S(\dot{A}))(T_1)} \\
						\& {(S(\dot{B}))(T_2)} \& {(S(\dot{B}))(T_1)}
						\arrow["{\zeta^{T_2}_{\dot{A}}}"', loose, from=1-2, to=2-1]
						\arrow["{\zeta^{t}_{\dot{A}}}"', between={0.2}{0.8}, Rightarrow, from=1-2, to=2-2]
						\arrow["{X(t)}"', loose, from=1-3, to=1-2]
						\arrow["{\zeta^{T_1}_{\dot{A}}}"{description}, loose, from=1-3, to=2-2]
						\arrow[""{name=0, anchor=center, inner sep=0}, "{\zeta^{T_1}_{\dot{B}}}", loose, from=1-3, to=3-3]
						\arrow["{S(f)_{T_2}}"', loose, from=2-1, to=3-2]
						\arrow["{S(f)_t}"', between={0.4}{0.6}, Rightarrow, from=2-1, to=3-3]
						\arrow["{(S(\dot{A}))(t)}", loose, from=2-2, to=2-1]
						\arrow["{S(f)_{T_1}}"{description}, loose, from=2-2, to=3-3]
						\arrow["{(S(\dot{B}))(t)}", loose, from=3-3, to=3-2]
						\arrow["{\zeta^{T_1}_f}"', between={0.3}{0.7}, Rightarrow, from=2-2, to=0]
					\end{tikzcd},
				\end{equation}
				which also gives the modification axiom for $\{\zeta^t_{\dot{A}}\}_{{\dot{A}} \in \ob\bbD}$, and hence, $\{\zeta^t_{\dot{A}}\}_{{\dot{A}} \in \ob\bbD}$ assembles to a modification $\zeta^t \colon \lambda \to \mu$. Now, by the $2$-dimensional universal property of $L_{T_2}$, there is a unique $2$-morphism
				\[
				\begin{tikzcd}[ampersand replacement=\&]
					{X(T_1)} \& {L_{T_1}} \\
					{X(T_2)} \& {L_{T_2}}
					\arrow["{\chi_{T_1}}", loose, from=1-1, to=1-2]
					\arrow["{X(t)}"', loose, from=1-1, to=2-1]
					\arrow["{L_t}", loose, from=1-2, to=2-2]
					\arrow["{\chi_t}"', between={0.3}{0.7}, Rightarrow, from=2-1, to=1-2]
					\arrow["{\chi_{T_2}}"', loose, from=2-1, to=2-2]
				\end{tikzcd}
				\]
				in $\bbK$ satisfying
				\begin{equation*}
					\begin{aligned}
						\begin{tikzcd}[ampersand replacement=\&]
							\& {X(T_1)} \& \\
							{X(T_2)} \&\& {L_{T_1}} \\
							\& {L_{T_2}} \\
							{(S(\dot{A}))(T_2)} \&\& {(S(\dot{B}))(T_2)}
							\arrow["{X(t)}"', loose, from=1-2, to=2-1]
							\arrow["{\chi_{T_1}}", loose, from=1-2, to=2-3]
							\arrow["{\exists!\chi_t}"', between={0.4}{0.6}, Rightarrow, from=2-1, to=2-3]
							\arrow["{\chi_{T_2}}"', loose, from=2-1, to=3-2]
							\arrow["{L_t}", loose, from=2-3, to=3-2]
							\arrow["{\eta^{T_2}_{\dot{A}}}"', from=3-2, to=4-1]
							\arrow[""{name=0, anchor=center, inner sep=0}, "{\eta^{T_2}_{\dot{B}}}", from=3-2, to=4-3]
							\arrow["{S(f)_{T_2}}"', loose, from=4-1, to=4-3]
							\arrow["{\eta^{T_2}_f}"', between={0.3}{0.7}, Rightarrow, from=4-1, to=0]
						\end{tikzcd}
						&=
						\begin{tikzcd}[ampersand replacement=\&, column sep=small]
							\& {X(T_2)} \& {X(T_1)} \\
							{(S(\dot{A}))(T_2)} \\
							\& {(S(\dot{B}))(T_2)} \& {(S(\dot{B}))(T_1)}
							\arrow["{\zeta^{T_2}_{\dot{A}}}"', loose, from=1-2, to=2-1]
							\arrow[""{name=0, anchor=center, inner sep=0}, "{\zeta^{T_2}_{\dot{B}}}"{description}, loose, from=1-2, to=3-2]
							\arrow["{\zeta^t_{\dot{B}}}", between={0.4}{0.6}, Rightarrow, from=1-2, to=3-3]
							\arrow["{X(t)}"', loose, from=1-3, to=1-2]
							\arrow["{\zeta^{T_1}_{\dot{B}}}", loose, from=1-3, to=3-3]
							\arrow["{S(f)_{T_2}}"', loose, from=2-1, to=3-2]
							\arrow["{(S(\dot{B}))(t)}", loose, from=3-3, to=3-2]
							\arrow["{\zeta^{T_2}_f}"', between={0.3}{0.7}, Rightarrow, from=2-1, to=0]
						\end{tikzcd}.
					\end{aligned}
				\end{equation*}
				In particular, we have
				\begin{equation}
					\label{eqt:chi_t_leg}
					\eta^{T_2}_{\dot{A}} \cdot \chi_t = \zeta^{t}_{\dot{A}},
				\end{equation}
				\begin{equation}
					\label{eqt:chi_t_leg_B}
					\eta^{T_2}_{\dot{B}} \cdot \chi_t = \zeta^t_{\dot{B}}.
				\end{equation}
				Now, we shall verify the naturality of $\chi_t$, i.e., for any $2$-morphism
				\[
				\begin{tikzcd}[ampersand replacement=\&]
					{L_{T_1}} \&\& {L_{T_2}}
					\arrow[""{name=0, anchor=center, inner sep=0}, "{L_t}"', curve={height=12pt}, loose, from=1-1, to=1-3]
					\arrow[""{name=1, anchor=center, inner sep=0}, "{L_s}", curve={height=-12pt}, loose, from=1-1, to=1-3]
					\arrow["{L_\gamma}", between={0.3}{0.7}, Rightarrow, from=1, to=0]
				\end{tikzcd}
				\]
				in $\mathmybb{T}$,
				\begin{equation}
					\label{eqt:chi_t_natural}
					\begin{tikzcd}[ampersand replacement=\&]
						{X(T_1)} \&\& {L_{T_1}} \\
						{X(T_2)} \&\& {L_{T_2}}
						\arrow["{\chi_{T_1}}", loose, from=1-1, to=1-3]
						\arrow["{X(s)}"', loose, from=1-1, to=2-1]
						\arrow[""{name=0, anchor=center, inner sep=0}, "{L_t}", curve={height=-12pt}, loose, from=1-3, to=2-3]
						\arrow[""{name=1, anchor=center, inner sep=0}, "{L_s}"', curve={height=12pt}, loose, from=1-3, to=2-3]
						\arrow["{\chi_s}"', between={0.4}{0.6}, Rightarrow, from=2-1, to=1-3]
						\arrow["{\chi_{T_2}}"', loose, from=2-1, to=2-3]
						\arrow["{L_\gamma}"', between={0.2}{0.8}, Rightarrow, from=1, to=0]
					\end{tikzcd}
					=
					\begin{tikzcd}[ampersand replacement=\&]
						{X(T_1)} \&\& {L_{T_1}} \\
						{X(T_2)} \&\& {L_{T_2}}
						\arrow["{\chi_{T_1}}", loose, from=1-1, to=1-3]
						\arrow[""{name=0, anchor=center, inner sep=0}, "{X(s)}"', curve={height=12pt}, loose, from=1-1, to=2-1]
						\arrow[""{name=1, anchor=center, inner sep=0}, "{X(t)}", curve={height=-12pt}, loose, from=1-1, to=2-1]
						\arrow["{L_t}", loose, from=1-3, to=2-3]
						\arrow["{\chi_t}"', between={0.4}{0.6}, Rightarrow, from=2-1, to=1-3]
						\arrow["{\chi_{T_2}}"', loose, from=2-1, to=2-3]
						\arrow["{X(\gamma)}"', between={0.2}{0.8}, Rightarrow, from=0, to=1]
					\end{tikzcd}.
				\end{equation}
				In fact, we have
				{
					\allowdisplaybreaks
					\begin{align*}
						\begin{tikzcd}[ampersand replacement=\&, sep=large]
							\& {X(T_1)} \& \\
							{X(T_2)} \&\& {L_{T_1}} \\
							\& {L_{T_2}} \\
							{(S(\dot{A}))(T_2)} \&\& {(S(\dot{B}))(T_2)}
							\arrow[""{name=0, anchor=center, inner sep=0}, "{X(s)}"', curve={height=12pt}, loose, from=1-2, to=2-1]
							\arrow[""{name=1, anchor=center, inner sep=0}, "{X(t)}", curve={height=-12pt}, loose, from=1-2, to=2-1]
							\arrow["{\chi_{T_1}}", loose, from=1-2, to=2-3]
							\arrow["{\chi_t}"', between={0.4}{0.6}, Rightarrow, from=2-1, to=2-3]
							\arrow["{\chi_{T_2}}"', loose, from=2-1, to=3-2]
							\arrow["{L_t}", loose, from=2-3, to=3-2]
							\arrow["{\eta^{T_2}_{\dot{A}}}"', from=3-2, to=4-1]
							\arrow[""{name=2, anchor=center, inner sep=0}, "{\eta^{T_2}_{\dot{B}}}", from=3-2, to=4-3]
							\arrow["{S(f)_{T_2}}"', loose, from=4-1, to=4-3]
							\arrow["{X(\gamma)}"', between={0.2}{0.8}, Rightarrow, from=0, to=1]
							\arrow["{\eta^{T_2}_f}"', between={0.3}{0.7}, Rightarrow, from=4-1, to=2]
						\end{tikzcd}
						=
						\begin{tikzcd}[ampersand replacement=\&,sep=large]
							\& {X(T_2)} \& {X(T_1)} \\
							{(S(\dot{A}))(T_2)} \\
							\& {(S(\dot{B}))(T_2)} \& {(S(\dot{B}))(T_1)}
							\arrow["{\zeta^{T_2}_{\dot{A}}}"', loose, from=1-2, to=2-1]
							\arrow[""{name=0, anchor=center, inner sep=0}, "{\zeta^{T_2}_{\dot{B}}}"{description}, loose, from=1-2, to=3-2]
							\arrow["{\zeta^t_{\dot{B}}}"', between={0.4}{0.6}, Rightarrow, from=1-2, to=3-3]
							\arrow[""{name=1, anchor=center, inner sep=0}, "{X(s)}"', curve={height=12pt}, loose, from=1-3, to=1-2]
							\arrow[""{name=2, anchor=center, inner sep=0}, "{X(t)}", curve={height=-12pt}, loose, from=1-3, to=1-2]
							\arrow["{\zeta^{T_1}_{\dot{B}}}", loose, from=1-3, to=3-3]
							\arrow["{S(f)_{T_2}}"', loose, from=2-1, to=3-2]
							\arrow["{(S(\dot{B}))(t)}", loose, from=3-3, to=3-2]
							\arrow["{X(\gamma)}"', between={0.2}{0.8}, Rightarrow, from=1, to=2]
							\arrow["{\zeta^{T_2}_f}"', between={0.3}{0.7}, Rightarrow, from=2-1, to=0]
						\end{tikzcd}
						&
						\\
						\stackunder{\text{naturality of $\zeta_{\dot{B}}$}}{$=$}
						\begin{tikzcd}[ampersand replacement=\&,sep=large]
							\& {X(T_2)} \& {X(T_1)} \\
							{(S(\dot{A}))(T_2)} \\
							\& {(S(\dot{B}))(T_2)} \& {(S(\dot{B}))(T_1)}
							\arrow["{\zeta^{T_2}_{\dot{A}}}"', loose, from=1-2, to=2-1]
							\arrow[""{name=0, anchor=center, inner sep=0}, "{\zeta^{T_2}_{\dot{B}}}"{description}, loose, from=1-2, to=3-2]
							\arrow["{\zeta^s_{\dot{B}}}", between={0.4}{0.6}, Rightarrow, from=1-2, to=3-3]
							\arrow["{X(s)}"', loose, from=1-3, to=1-2]
							\arrow["{\zeta^{T_1}_{\dot{B}}}", loose, from=1-3, to=3-3]
							\arrow["{S(f)_{T_2}}"', loose, from=2-1, to=3-2]
							\arrow[""{name=1, anchor=center, inner sep=0}, "{(S(\dot{B}))(t)}", curve={height=-12pt}, loose, from=3-3, to=3-2]
							\arrow[""{name=2, anchor=center, inner sep=0}, "{(S(\dot{B}))(s)}"', curve={height=12pt}, loose, from=3-3, to=3-2]
							\arrow["{\zeta^{T_2}_f}"', between={0.3}{0.7}, Rightarrow, from=2-1, to=0]
							\arrow["{{\scalebox{0.8}{$(S(\dot{B}))(\gamma)$}}}"{description}, between={0.2}{0.8}, Rightarrow, from=2, to=1]
						\end{tikzcd}
						&
						\\
						\stackunder{\text{$\zeta_f$ is a  modification}}{$=$}
						\begin{tikzcd}[ampersand replacement=\&,sep=large]
							\& {X(T_2)} \& {X(T_1)} \\
							{(S(\dot{A}))(T_2)} \& {(S(\dot{A}))(T_1)} \\
							\& {(S(\dot{B}))(T_2)} \& {(S(\dot{B}))(T_1)}
							\arrow["{\zeta^{T_2}_{\dot{A}}}"', loose, from=1-2, to=2-1]
							\arrow["{\zeta^{s}_{\dot{A}}}"', between={0.2}{0.8}, Rightarrow, from=1-2, to=2-2]
							\arrow["{X(s)}"', loose, from=1-3, to=1-2]
							\arrow["{\zeta^{T_1}_{\dot{A}}}"{description}, loose, from=1-3, to=2-2]
							\arrow[""{name=0, anchor=center, inner sep=0}, "{\zeta^{T_1}_{\dot{B}}}", loose, from=1-3, to=3-3]
							\arrow["{S(f)_{T_2}}"', loose, from=2-1, to=3-2]
							\arrow["{(S(\dot{A}))(s)}", loose, from=2-2, to=2-1]
							\arrow["{\scalebox{0.8}{$S(f)_{T_1}$}}", loose, from=2-2, to=3-3]
							\arrow[""{name=1, anchor=center, inner sep=0}, "{\scalebox{0.7}{$(S(\dot{B}))(s)$}}"{description}, curve={height=12pt}, loose, from=3-3, to=3-2]
							\arrow[""{name=2, anchor=center, inner sep=0}, "{(S(\dot{B}))(t)}", curve={height=-12pt}, loose, from=3-3, to=3-2]
							\arrow["{S(f)_s}", between={0.4}{0.6}, Rightarrow, from=2-1, to=1]
							\arrow["{\zeta^{T_1}_f}", between={0.3}{0.7}, Rightarrow, from=2-2, to=0]
							\arrow["{{\scalebox{0.6}{$(S(\dot{B}))(\gamma)$}}}"{description}, between={0.2}{0.8}, Rightarrow, from=1, to=2]
						\end{tikzcd}
						&
						\\
						\stackunder{\text{naturality of $S(f)$}}{$=$}
						\begin{tikzcd}[ampersand replacement=\&,sep=large]
							\& {X(T_2)} \& {X(T_1)} \\
							{(S(\dot{A}))(T_2)} \& {(S(\dot{A}))(T_1)} \\
							\& {(S(\dot{B}))(T_2)} \& {(S(\dot{B}))(T_1)}
							\arrow["{\zeta^{T_2}_{\dot{A}}}"', loose, from=1-2, to=2-1]
							\arrow["{\zeta^{s}_{\dot{A}}}"', between={0.2}{0.8}, Rightarrow, from=1-2, to=2-2]
							\arrow["{X(s)}"', loose, from=1-3, to=1-2]
							\arrow["{\zeta^{T_1}_{\dot{A}}}"{description}, loose, from=1-3, to=2-2]
							\arrow[""{name=0, anchor=center, inner sep=0}, "{\zeta^{T_1}_{\dot{B}}}", loose, from=1-3, to=3-3]
							\arrow["{S(f)_{T_2}}"', loose, from=2-1, to=3-2]
							\arrow["{S(f)_t}"', between={0.4}{0.6}, Rightarrow, from=2-1, to=3-3]
							\arrow[""{name=1, anchor=center, inner sep=0}, "{\scalebox{0.8}{$(S(\dot{A}))(s)$}}"{description}, curve={height=12pt}, loose, from=2-2, to=2-1]
							\arrow[""{name=2, anchor=center, inner sep=0}, "{\scalebox{0.8}{$(S(\dot{A}))(t)$}}"{description}, curve={height=-12pt}, loose, from=2-2, to=2-1]
							\arrow["{\scalebox{0.8}{$S(f)_{T_1}$}}", loose, from=2-2, to=3-3]
							\arrow["{(S(\dot{B}))(t)}", loose, from=3-3, to=3-2]
							\arrow["{\scalebox{0.7}{$(S(\dot{A}))(\gamma)$}}"{description}, between={0.2}{0.8}, Rightarrow, from=1, to=2]
							\arrow["{\zeta^{T_1}_f}", between={0.3}{0.7}, Rightarrow, from=2-2, to=0]
						\end{tikzcd}
						&
						\\
						\stackunder{\longref{}{eqt:lambda}}{$=$}
						\begin{tikzcd}[ampersand replacement=\&,sep=large]
							\& {X(T_2)} \& {X(T_1)} \\
							{(S(\dot{A}))(T_2)} \& {(S(\dot{A}))(T_1)} \& {L_{T_1}} \\
							\& {(S(\dot{B}))(T_2)} \& {(S(\dot{B}))(T_1)}
							\arrow["{\zeta^{T_2}_{\dot{A}}}"', loose, from=1-2, to=2-1]
							\arrow["{\zeta^{s}_{\dot{A}}}"', between={0.2}{0.8}, Rightarrow, from=1-2, to=2-2]
							\arrow["{X(s)}"', loose, from=1-3, to=1-2]
							\arrow["{\chi_{T_1}}", loose, from=1-3, to=2-3]
							\arrow["{S(f)_{T_2}}"', loose, from=2-1, to=3-2]
							\arrow["{S(f)_t}"', between={0.4}{0.6}, Rightarrow, from=2-1, to=3-3]
							\arrow[""{name=0, anchor=center, inner sep=0}, "{\scalebox{0.8}{$(S(\dot{A}))(s)$}}"{description}, curve={height=12pt}, loose, from=2-2, to=2-1]
							\arrow[""{name=1, anchor=center, inner sep=0}, "{\scalebox{0.8}{$(S(\dot{A}))(t)$}}"{description}, curve={height=-12pt}, loose, from=2-2, to=2-1]
							\arrow["{\scalebox{0.8}{$S(f)_{T_1}$}}"{description}, loose, from=2-2, to=3-3]
							\arrow["{\eta^{T_1}_{\dot{A}}}"', from=2-3, to=2-2]
							\arrow[""{name=2, anchor=center, inner sep=0}, "{\eta^{T_1}_{\dot{B}}}", from=2-3, to=3-3]
							\arrow["{(S(\dot{B}))(t)}", loose, from=3-3, to=3-2]
							\arrow["{\scalebox{0.7}{$(S(\dot{A}))(\gamma)$}}"{description}, between={0.2}{0.8}, Rightarrow, from=0, to=1]
							\arrow["{\eta^{T_1}_f}", between={0.3}{0.7}, Rightarrow, from=2-2, to=2]
						\end{tikzcd}
						&
						\\
						\stackunder{\longref{}{eqt:L_gamma}\&\longref{}{diag:eta_X}}{$=$}
						\begin{tikzcd}[ampersand replacement=\&,sep=large]
							\& {X(T_2)} \& {X(T_1)} \\
							{(S(\dot{A}))(T_2)} \& {L_{T_2}} \& {L_{T_1}} \\
							\& {(S(\dot{B}))(T_2)} \& {(S(\dot{B}))(T_1)}
							\arrow["{\zeta^{T_2}_{\dot{A}}}"', loose, from=1-2, to=2-1]
							\arrow["{\zeta^{s}_{\dot{A}}}"', between={0.2}{0.8}, Rightarrow, from=1-2, to=2-2]
							\arrow["{X(s)}"', loose, from=1-3, to=1-2]
							\arrow["{\chi_{T_1}}", loose, from=1-3, to=2-3]
							\arrow["{S(f)_{T_2}}"', loose, from=2-1, to=3-2]
							\arrow["{\eta^{T_2}_{\dot{A}}}"', from=2-2, to=2-1]
							\arrow[""{name=0, anchor=center, inner sep=0}, "{\eta^{T_2}_{\dot{B}}}", from=2-2, to=3-2]
							\arrow["{\eta^t_{\dot{A}}}"', draw=none, from=2-2, to=3-3]
							\arrow[""{name=1, anchor=center, inner sep=0}, "{L_s}"', curve={height=12pt}, loose, from=2-3, to=2-2]
							\arrow[""{name=2, anchor=center, inner sep=0}, "{L_t}"{description}, curve={height=-12pt}, loose, from=2-3, to=2-2]
							\arrow["{\eta^{T_1}_{\dot{B}}}", from=2-3, to=3-3]
							\arrow["{(S(\dot{B}))(t)}", loose, from=3-3, to=3-2]
							\arrow["{\eta^{T_2}_f}", between={0.3}{0.7}, Rightarrow, from=2-1, to=0]
							\arrow["{L_\gamma}", between={0.2}{0.8}, Rightarrow, from=1, to=2]
						\end{tikzcd}
						&
						\\
						\stackunder{\longref{}{eqt:chi_t_leg}}{$=$}
						\begin{tikzcd}[ampersand replacement=\&,sep=large]
							\& {X(T_1)} \& \\
							{X(T_2)} \&\& {L_{T_1}} \\
							\& {L_{T_2}} \\
							{(S(\dot{A}))(T_2)} \&\& {(S(\dot{B}))(T_2)}
							\arrow["{X(s)}"', loose, from=1-2, to=2-1]
							\arrow["{\chi_{T_1}}", loose, from=1-2, to=2-3]
							\arrow["{\chi_s}"', between={0.4}{0.6}, Rightarrow, from=2-1, to=2-3]
							\arrow["{\chi_{T_2}}"', loose, from=2-1, to=3-2]
							\arrow[""{name=0, anchor=center, inner sep=0}, "{L_s}"{description}, curve={height=12pt}, loose, from=2-3, to=3-2]
							\arrow[""{name=1, anchor=center, inner sep=0}, "{L_t}", curve={height=-12pt}, loose, from=2-3, to=3-2]
							\arrow["{\eta^{T_2}_{\dot{A}}}"', from=3-2, to=4-1]
							\arrow[""{name=2, anchor=center, inner sep=0}, "{\eta^{T_2}_{\dot{B}}}", from=3-2, to=4-3]
							\arrow["{S(f)_{T_2}}"', loose, from=4-1, to=4-3]
							\arrow["{L_\gamma}"', between={0.2}{0.8}, Rightarrow, from=0, to=1]
							\arrow["{\eta^{T_2}_f}"', between={0.3}{0.7}, Rightarrow, from=4-1, to=2]
						\end{tikzcd}
						&
					\end{align*}
				}
				Now, by the universality, we conclude that \longref{}{eqt:chi_t_natural} holds. Therefore, $\chi$ is a loose colax natural transformation. If $\zeta$ is actually a dotted-lax natural transformation, then $\chi$ would be an $\F$-natural transformation, which is tight in $\mathmybb{Fun}_{s, c}(\mathmybb{T}, \mathmybb{K})$.
				
				Altogether, we have just shown that for any marked (resp. dotted)-lax natural transformation $\zeta \colon \triangle(X) \to S$, there is a unique $\F$- (resp. loose colax) natural transformation $\chi \colon X \to L$ in $\mathmybb{Fun}_{s, c}(\mathmybb{T}, \mathmybb{K})$ such that
				\begin{equation*}
					\begin{tikzcd}[ampersand replacement=\&]
						\& X \& \\
						\& L \\
						{S(\dot{A})} \&\& {S(\dot{B})}
						\arrow["{\exists! \chi}", dashed, loose, from=1-2, to=2-2]
						\arrow["{\eta_{\dot{A}}}"', from=2-2, to=3-1]
						\arrow[""{name=0, anchor=center, inner sep=0}, "{\eta_{\dot{B}}}", from=2-2, to=3-3]
						\arrow["{S(f)}"', loose, from=3-1, to=3-3]
						\arrow["{\eta_f}"', between={0.3}{0.6}, Rightarrow, from=3-1, to=0]
					\end{tikzcd}
					=
					\begin{tikzcd}[ampersand replacement=\&]
						\& X \& \\
						\\
						{S(\dot{A})} \&\& {S(\dot{B})}
						\arrow["{\zeta_{\dot{A}}}"', loose, from=1-2, to=3-1]
						\arrow[""{name=0, anchor=center, inner sep=0}, "{\zeta_{\dot{B}}}", loose, from=1-2, to=3-3]
						\arrow["{S(f)}"', loose, from=3-1, to=3-3]
						\arrow["{\zeta_f}"', between={0.3}{0.6}, Rightarrow, from=3-1, to=0]
					\end{tikzcd};
				\end{equation*}
				here, $\chi$ is tight if and only if both $\zeta_{\dot{A}}$ and $\zeta_{\dot{B}}$ are tight, which is precisely when $\zeta$ is a tight morphism in $[\bbD, \mathmybb{Fun}_{s, c}(\mathmybb{T}, \mathmybb{K})]_{s, \Sigma, \Gamma}$. This is exactly the $1$-dimensional universal property of $L$ in $\mathmybb{Fun}_{s, c}(\mathmybb{T}, \mathmybb{K})$.
				
				\noindent\textbf{$\boldsymbol{2}$-dimensional universal property of $\boldsymbol{L}$:}
				
				Next, suppose there is a modification $Z \colon \zeta_1 \to \zeta_2$, where $\zeta_1, \zeta_2 \colon \triangle(X) \rightrightarrows S$ are both marked-lax natural transformations. Then, the component $Z_{\dot{A}}$ of $Z$ at $\dot{A} \in \ob\bbD$ is a $2$-morphism in  $\mathmybb{Fun}_{s, c}(\mathmybb{T}, \mathmybb{K})$, which is a modification $Z_A \colon {\zeta_1}_{\dot{A}} \to {\zeta_2}_{\dot{A}}$ between the loose colax natural transformations ${\zeta_1}_{\dot{A}}$ and ${\zeta_2}_{\dot{A}}$. The component $Z^T_{\dot{A}}$ of $Z_{\dot{A}}$ at $T \in \ob\mathmybb{T}$ is then a $2$-morphism
				\[
				\begin{tikzcd}[ampersand replacement=\&]
					{X(T)} \&\& {(S(\dot{A}))(T)}
					\arrow[""{name=0, anchor=center, inner sep=0}, "{{\zeta^T_2}_{\dot{A}}}"', curve={height=12pt}, loose, from=1-1, to=1-3, in = 210, out = 330]
					\arrow[""{name=1, anchor=center, inner sep=0}, "{{\zeta^T_1}_{\dot{A}}}", curve={height=-12pt}, loose, from=1-1, to=1-3, in = 150, out = 30]
					\arrow["{Z^T_{\dot{A}}}", between={0.3}{0.7}, Rightarrow, from=1, to=0]
				\end{tikzcd}
				\]
				in $\bbK$. We claim that $\{Z^T_{\dot{A}}\}_{\dot{A} \in \ob\bbD}$ assembles to a modification $Z^T \colon \zeta^T_1 \to \zeta^T_2$, where $\zeta^T_1, \zeta^T_2 \colon \triangle(X(T)) \to \mathrm{ev}_T \cdot S$ are both marked-lax natural transformations.
				
				First, it is clear that for a marked-lax natural transformation $\zeta \colon \triangle(X) \to S$, its evaluation $\zeta^T\colon \triangle(X(T)) \to \mathrm{ev}_T \cdot S$ at $T$ is a marked-lax natural transformation. It remains to show that the components of $Z^T$ satisfy the modification axiom, but that just follows directly from the fact that $Z$ is a modification. By the $2$-dimensional universal property of $L_T$, there is a unique $2$-morphism $x_T \colon {\chi_1}_T \to {\chi_2}_T$ in $\bbK$ such that
				\begin{equation}
					\label{eqt:x_T}
					\begin{tikzcd}[ampersand replacement=\&, column sep =0.5, row sep = large]
						\& {X(T)} \& \\
						\& {L_T} \\
						{(S(\dot{A}))(T)} \&\& {(S(\dot{B}))(T)}
						\arrow[""{name=0, anchor=center, inner sep=0}, "{{\chi_1}_T}"', curve={height=12pt}, loose, from=1-2, to=2-2]
						\arrow[""{name=1, anchor=center, inner sep=0}, "{{\chi_2}_T}", curve={height=-12pt}, loose, from=1-2, to=2-2]
						\arrow["{\eta^T_{\dot{A}}}"', from=2-2, to=3-1]
						\arrow[""{name=2, anchor=center, inner sep=0}, "{\eta^T_{\dot{B}}}", from=2-2, to=3-3]
						\arrow["{S(f)_T}"', loose, from=3-1, to=3-3]
						\arrow["{\exists! x_T}"', between={0.2}{0.8}, Rightarrow, dashed, from=0, to=1]
						\arrow["{\eta^T_f}", between={0.4}{0.6}, Rightarrow, from=3-1, to=2]
					\end{tikzcd}
					\hspace{-0.5em}
					=
					\hspace{-0.5em}
					\begin{tikzcd}[ampersand replacement=\&,column sep=0.5,row sep=large]
						\& {X(T)} \& \\
						\\
						{(S(\dot{A}))(T)} \&\& {(S(\dot{B}))(T)}
						\arrow[""{name=0, anchor=center, inner sep=0}, "{{\zeta^T_2}_{\dot{A}}}"{description}, curve={height=-12pt}, loose, 	from=1-2, to=3-1]
						\arrow[""{name=1, anchor=center, inner sep=0}, "{{\zeta^T_1}_{\dot{A}}}"', curve={height=12pt}, loose, from=1-2, 	to=3-1]
						\arrow[""{name=2, anchor=center, inner sep=0}, "{{\zeta^T_2}_{\dot{B}}}", loose, from=1-2, to=3-3]
						\arrow["{S(f)_T}"', loose, from=3-1, to=3-3]
						\arrow["{Z^T_{\dot{A}}}"', between={0.2}{0.8}, Rightarrow, from=1, to=0]
						\arrow["{{\zeta^T_2}_f}"', between={0.3}{0.6}, Rightarrow, from=3-1, to=2]
					\end{tikzcd}
					\hspace{-0.5em}
					=
					\hspace{-0.5em}
					\begin{tikzcd}[ampersand replacement=\&,column sep=0.5,row sep=large]
						\& {X(T)} \& \\
						\\
						{(S(\dot{A}))(T)} \&\& {(S(\dot{B}))(T)}
						\arrow["{{\zeta^T_1}_{\dot{A}}}"', loose, from=1-2, to=3-1]
						\arrow[""{name=0, anchor=center, inner sep=0}, "{{\zeta^T_2}_{\dot{B}}}", curve={height=-12pt}, loose, from=1-2, 	to=3-3]
						\arrow[""{name=1, anchor=center, inner sep=0}, "{{\zeta^T_1}_{\dot{B}}}"{description}, curve={height=12pt}, loose, 	from=1-2, to=3-3]
						\arrow["{S(f)_T}"', loose, from=3-1, to=3-3]
						\arrow["{Z^T_{\dot{B}}}"', between={0.2}{0.8}, Rightarrow, from=1, to=0]
						\arrow["{{\zeta^T_1}_{f}}"', between={0.3}{0.7}, Rightarrow, from=3-1, to=1]
					\end{tikzcd}.
				\end{equation}
				It then remains to show that $\{x_T\}_{T \in \ob\mathmybb{T}}$ assembles to a modification $x \colon \chi_1 \to \chi_2$ in $\mathmybb{Fun}_{s, c}(\mathmybb{T}, \mathmybb{K})$, which means we have to verify
				\begin{equation}
					\label{eqt:x_modification}
					\begin{tikzcd}[ampersand replacement=\&]
						{X(T_1)} \&\& {X(T_2)} \\
						{L_{T_1}} \&\& {L_{T_2}}
						\arrow["{X(t)}", loose, from=1-1, to=1-3]
						\arrow[""{name=0, anchor=center, inner sep=0}, "{{\chi_1}_{T_1}}", curve={height=-12pt}, loose, from=1-1, to=2-1]
						\arrow[""{name=1, anchor=center, inner sep=0}, "{{\chi_2}_{T_1}}"', curve={height=12pt}, loose, from=1-1, to=2-1]
						\arrow["{{\chi_1}_t}", between={0.4}{0.6}, Rightarrow, from=1-3, to=2-1]
						\arrow["{{\chi_1}_{T_2}}", loose, from=1-3, to=2-3]
						\arrow["{L_t}"', loose, from=2-1, to=2-3]
						\arrow["{x_{T_1}}", between={0.2}{0.8}, Rightarrow, from=0, to=1]
					\end{tikzcd}
					=
					\begin{tikzcd}[ampersand replacement=\&]
						{X(T_1)} \&\& {X(T_2)} \\
						{L_{T_1}} \&\& {L_{T_2}}
						\arrow["{X(t)}", loose, from=1-1, to=1-3]
						\arrow["{{\chi_2}_{T_1}}"', loose, from=1-1, to=2-1]
						\arrow["{{\chi_2}_t}"', between={0.4}{0.6}, Rightarrow, from=1-3, to=2-1]
						\arrow[""{name=0, anchor=center, inner sep=0}, "{{\chi_2}_{T_2}}"', curve={height=12pt}, loose, from=1-3, to=2-3]
						\arrow[""{name=1, anchor=center, inner sep=0}, "{{\chi_1}_{T_2}}", curve={height=-12pt}, loose, from=1-3, to=2-3]
						\arrow["{L_t}"', loose, from=2-1, to=2-3]
						\arrow["{x_{T_2}}", between={0.2}{0.8}, Rightarrow, from=1, to=0]
					\end{tikzcd}.
				\end{equation}
				In fact, we have
				{
					\allowdisplaybreaks
					\begin{align*}
						\begin{tikzcd}[ampersand replacement=\&, column sep=small]
							\& {X(T_1)} \& \\
							{X(T_2)} \&\& {L_{T_1}} \\
							\& {L_{T_2}} \\
							{(S(\dot{A}))({T_2})} \&\& {(S(\dot{B}))({T_2})}
							\arrow["{X(t)}"', loose, from=1-2, to=2-1]
							\arrow["{{\chi_2}_{T_1}}", loose, from=1-2, to=2-3]
							\arrow["{{\chi_2}_{t}}", between={0.4}{0.6}, Rightarrow, from=2-1, to=2-3]
							\arrow[""{name=0, anchor=center, inner sep=0}, "{{\chi_2}_{T_2}}"{description}, curve={height=-12pt}, loose, from=2-1, to=3-2]
							\arrow[""{name=1, anchor=center, inner sep=0}, "{{\chi_1}_{T_2}}"', curve={height=12pt}, loose, from=2-1, to=3-2]
							\arrow["{L_t}", loose, from=2-3, to=3-2]
							\arrow["{\eta^{T_2}_{\dot{A}}}"', from=3-2, to=4-1]
							\arrow[""{name=2, anchor=center, inner sep=0}, "{\eta^{T_2}_{\dot{B}}}", from=3-2, to=4-3]
							\arrow["{S(f)_{T_2}}"', loose, from=4-1, to=4-3]
							\arrow["{x_{T_2}}"', between={0.2}{0.8}, Rightarrow, from=1, to=0]
							\arrow["{\eta^{T_2}_f}", between={0.4}{0.6}, Rightarrow, from=4-1, to=2]
						\end{tikzcd}
						\stackunder{\longref{}{eqt:chi_t_leg_B}\&\longref{}{eqt:x_T}}{$=$}
						\begin{tikzcd}[ampersand replacement=\&]
							\& {X({T_2})} \&\& {X(T_1)} \\
							{(S(\dot{A}))({T_2})} \\
							\& {(S(\dot{B}))({T_2})} \&\& {(S(\dot{B}))(T_1)}
							\arrow[""{name=0, anchor=center, inner sep=0}, "{{\zeta^{T_2}_2}_{\dot{A}}}"{description}, curve={height=-12pt}, 	loose, from=1-2, to=2-1]
							\arrow[""{name=1, anchor=center, inner sep=0}, "{{\zeta^{T_2}_1}_{\dot{A}}}"', curve={height=12pt}, loose, from=1-2, to=2-1]
							\arrow[""{name=2, anchor=center, inner sep=0}, "{{\zeta^{T_2}_2}_{\dot{B}}}", loose, from=1-2, to=3-2]
							\arrow["{{\zeta^{t}_2}_{\dot{B}}}", between={0.4}{0.6}, Rightarrow, from=1-2, to=3-4]
							\arrow["{X(t)}"', loose, from=1-4, to=1-2]
							\arrow["{{\zeta^{T_1}_2}_{\dot{B}}}", loose, from=1-4, to=3-4]
							\arrow["{S(f)_{T_2}}"', loose, from=2-1, to=3-2]
							\arrow["{(S(\dot{B}))(t)}", loose, from=3-4, to=3-2]
							\arrow["{Z^{T_2}_{\dot{A}}}"', between={0.2}{0.8}, Rightarrow, from=1, to=0]
							\arrow["{{\zeta^{T_2}_2}_f}"', between={0.3}{0.6}, Rightarrow, from=2-1, to=2]
						\end{tikzcd}
						&
						\\
						\stackunder{\longref{}{eqt:zeta_f}}{$=$}
						\begin{tikzcd}[ampersand replacement=\&]
							\& {X(T_2)} \& {X(T_1)} \\
							{(S(\dot{A}))(T_2)} \& {(S(\dot{A}))(T_1)} \\
							\& {(S(\dot{B}))(T_2)} \& {(S(\dot{B}))(T_1)}
							\arrow[""{name=0, anchor=center, inner sep=0}, "{{\zeta_2}^{T_2}_{\dot{A}}}"{description}, curve={height=-12pt}, 	loose, from=1-2, to=2-1]
							\arrow[""{name=1, anchor=center, inner sep=0}, "{{\zeta_1}^{T_2}_{\dot{A}}}"', curve={height=12pt}, loose, from=1-2, to=2-1]
							\arrow["{{\zeta_2}^{t}_{\dot{A}}}", between={0.2}{0.8}, Rightarrow, from=1-2, to=2-2]
							\arrow["{X(t)}"', loose, from=1-3, to=1-2]
							\arrow["{{\zeta_2}^{T_1}_{\dot{A}}}"{description}, loose, from=1-3, to=2-2]
							\arrow[""{name=2, anchor=center, inner sep=0}, "{{\zeta_2}^{T_1}_{\dot{B}}}", loose, from=1-3, to=3-3]
							\arrow["{S(f)_{T_2}}"', loose, from=2-1, to=3-2]
							\arrow["{S(f)_t}"', between={0.4}{0.6}, Rightarrow, from=2-1, to=3-3]
							\arrow["{(S(\dot{A}))(t)}", loose, from=2-2, to=2-1]
							\arrow["{S(f)_{T_1}}"{description}, loose, from=2-2, to=3-3]
							\arrow["{(S(\dot{B}))(t)}", loose, from=3-3, to=3-2]
							\arrow["{Z^{T_2}_{\dot{A}}}", between={0.2}{0.8}, Rightarrow, from=1, to=0]
							\arrow["{{\zeta_2}^{T_1}_f}"', between={0.3}{0.7}, Rightarrow, from=2-2, to=2]
						\end{tikzcd}
						&
						\\
						\stackunder{$Z_A$ is a modification}{$=$}
						\begin{tikzcd}[ampersand replacement=\&]
							\& {X(T_2)} \& {X(T_1)} \\
							{(S(\dot{A}))(T_2)} \& {(S(\dot{A}))(T_1)} \\
							\& {(S(\dot{B}))(T_2)} \& {(S(\dot{B}))(T_1)}
							\arrow["{{\zeta_1}^{T_2}_{\dot{A}}}"', loose, from=1-2, to=2-1]
							\arrow["{{\zeta_1}^{t}_{\dot{A}}}"', between={0.2}{0.8}, Rightarrow, from=1-2, to=2-2]
							\arrow["{X(t)}"', loose, from=1-3, to=1-2]
							\arrow[""{name=0, anchor=center, inner sep=0}, "{{\zeta_2}^{T_1}_{\dot{A}}}"{description}, curve={height=-12pt}, 	loose, from=1-3, to=2-2]
							\arrow[""{name=1, anchor=center, inner sep=0}, "{{\zeta_1}^{T_1}_{\dot{A}}}"{description}, curve={height=12pt}, 	loose, from=1-3, to=2-2]
							\arrow[""{name=2, anchor=center, inner sep=0}, "{{\zeta_2}^{T_1}_{\dot{B}}}", loose, from=1-3, to=3-3]
							\arrow["{S(f)_{T_2}}"', loose, from=2-1, to=3-2]
							\arrow["{S(f)_t}"', between={0.4}{0.6}, Rightarrow, from=2-1, to=3-3]
							\arrow["{(S(\dot{A}))(t)}", loose, from=2-2, to=2-1]
							\arrow["{S(f)_{T_1}}"{description}, loose, from=2-2, to=3-3]
							\arrow["{(S(\dot{B}))(t)}", loose, from=3-3, to=3-2]
							\arrow["{Z^{T_1}_{\dot{A}}}", between={0.2}{0.8}, Rightarrow, from=1, to=0]
							\arrow["{{\zeta_2}^{T_1}_f}"', between={0.3}{0.7}, Rightarrow, from=2-2, to=2]
						\end{tikzcd}
						&
						\\
						\stackunder{\longref{}{eqt:x_T}}{$=$}
						\begin{tikzcd}[ampersand replacement=\&]
							\& {X(T_2)} \& {X(T_1)} \\
							{(S(\dot{A}))(T_2)} \& {(S(\dot{A}))(T_1)} \& {L_{T_1}} \\
							\& {(S(\dot{B}))(T_2)} \& {(S(\dot{B}))(T_1)}
							\arrow["{{\zeta_1}^{T_2}_{\dot{A}}}"', loose, from=1-2, to=2-1]
							\arrow["{{\zeta_1}^{t}_{\dot{A}}}"', between={0.2}{0.8}, Rightarrow, from=1-2, to=2-2]
							\arrow["{X(t)}"', loose, from=1-3, to=1-2]
							\arrow[""{name=0, anchor=center, inner sep=0}, "{{\chi_2}_{T_1}}", curve={height=-12pt}, loose, from=1-3, to=2-3]
							\arrow[""{name=1, anchor=center, inner sep=0}, "{{\chi_1}_{T_1}}"', curve={height=12pt}, loose, from=1-3, to=2-3]
							\arrow["{S(f)_{T_2}}"', loose, from=2-1, to=3-2]
							\arrow["{S(f)_t}"', between={0.4}{0.6}, Rightarrow, from=2-1, to=3-3]
							\arrow["{(S(\dot{A}))(t)}", loose, from=2-2, to=2-1]
							\arrow["{S(f)_{T_1}}"{description}, loose, from=2-2, to=3-3]
							\arrow["{\eta^{T_1}_{\dot{A}}}"{description}, from=2-3, to=2-2]
							\arrow[""{name=2, anchor=center, inner sep=0}, "{\eta^{T_1}_{\dot{B}}}", from=2-3, to=3-3]
							\arrow["{(S(\dot{B}))(t)}", loose, from=3-3, to=3-2]
							\arrow["{x_{T_1}}", between={0.2}{0.8}, Rightarrow, from=1, to=0]
							\arrow["{\scriptscriptstyle{\eta^{T_1}_f}}"{description}, between={0.3}{0.7}, Rightarrow, from=2-2, to=2]
						\end{tikzcd}
						&
						\\
						\stackunder{\longref{}{eqt:uni_L_t}}{$=$}
						\begin{tikzcd}[ampersand replacement=\&]
							\& {X(T_2)} \& {X(T_1)} \\
							{(S(\dot{A}))(T_2)} \&\& {L_{T_1}} \\
							{(S(\dot{B}))(T_2)} \&\& {L_{T_2}}
							\arrow["{{\zeta_1}^{T_2}_{\dot{A}}}"', loose, from=1-2, to=2-1]
							\arrow["{X(t)}"', loose, from=1-3, to=1-2]
							\arrow[""{name=0, anchor=center, inner sep=0}, "{{\chi_2}_{T_1}}", curve={height=-12pt}, loose, from=1-3, to=2-3]
							\arrow[""{name=1, anchor=center, inner sep=0}, "{{\chi_1}_{T_1}}"', curve={height=12pt}, loose, from=1-3, to=2-3]
							\arrow["{{\zeta_1}^{t}_{\dot{A}}}", between={0.5}{0.7}, Rightarrow, from=2-1, to=2-3]
							\arrow["{S(f)_{T_2}}"', loose, from=2-1, to=3-1]
							\arrow["{L_t}", loose, from=2-3, to=3-3]
							\arrow["{\eta^{T_2}_{\dot{A}}}"', from=3-3, to=2-1]
							\arrow[""{name=2, anchor=center, inner sep=0}, "{\eta^{T_2}_{\dot{B}}}", from=3-3, to=3-1]
							\arrow["{x_{T_1}}", between={0.2}{0.8}, Rightarrow, from=1, to=0]
							\arrow["{\eta^{T_2}_f}"', between={0.3}{0.7}, Rightarrow, from=2-1, to=2]
						\end{tikzcd}
						&
						\\
						\stackunder{\longref{}{eqt:chi_t_leg}}{$=$}
						\begin{tikzcd}[ampersand replacement=\&,column sep=small]
							\& {X(T_1)} \& \\
							{X(T_2)} \&\& {L_{T_1}} \\
							\& {L_{T_2}} \\
							{(S(\dot{A}))({T_2})} \&\& {(S(\dot{B}))({T_2})}
							\arrow["{X(t)}"', loose, from=1-2, to=2-1]
							\arrow[""{name=0, anchor=center, inner sep=0}, "{{\chi_2}_{T_1}}", curve={height=-12pt}, loose, from=1-2, to=2-3]
							\arrow[""{name=1, anchor=center, inner sep=0}, "{{\chi_1}_{T_1}}"{description}, curve={height=12pt}, loose, from=1-2, to=2-3]
							\arrow["{{\chi_1}_{t}}", between={0.4}{0.6}, Rightarrow, from=2-1, to=2-3]
							\arrow["{{\chi_1}_{T_2}}"', loose, from=2-1, to=3-2]
							\arrow["{L_t}", loose, from=2-3, to=3-2]
							\arrow["{\eta^{T_2}_{\dot{A}}}"', from=3-2, to=4-1]
							\arrow[""{name=2, anchor=center, inner sep=0}, "{\eta^{T_2}_{\dot{B}}}", from=3-2, to=4-3]
							\arrow["{S(f)_{T_2}}"', loose, from=4-1, to=4-3]
							\arrow["{x_{T_1}}"', between={0.2}{0.8}, Rightarrow, from=1, to=0]
							\arrow["{\eta^{T_2}_f}", between={0.4}{0.6}, Rightarrow, from=4-1, to=2]
						\end{tikzcd}
						&
					\end{align*}
				}
				As a consequence, \longref{}{eqt:x_modification} holds, and $L$ exhibits also the $2$-dimensional universal property of the colax limit of $S(f)$.
				
				All-in-all, we have proved that $L$ is the dotted-lax limit $\dotlim{l}{S}$ of $S$ in $\mathmybb{Fun}_{s, c}(\mathmybb{T}, \mathmybb{K})$.
				
				Next, suppose further that both $S(\dot{A})$ and $S(\dot{B})$ are $\F$-models. This means that for any $\F$-weighted cone
				$$(\Phi\colon \mathccal{I} \to \bbF, \; F \colon \mathccal{I} \to \mathmybb{T}, \; X \in \mathmybb{T}, \; \phi \colon \Phi \to \mathmybb{T}(X, F-))$$
				where $\mathccal{I}$ is a $2$-category, the composites
				\begin{align*}
					&\Phi \xrightarrow{\phi} \mathmybb{T}(X, F-) \xrightarrow{S(\dot{A})} \bbK((S(\dot{A}))(X), S(\dot{A}) \cdot F-),
					\\
					&\Phi \xrightarrow{\phi} \mathmybb{T}(X, F-) \xrightarrow{S(\dot{B})} \bbK((S(\dot{B}))(X), S(\dot{B}) \cdot F-),
				\end{align*}
				are both limit cones in $\bbK$. By \cite[Proposition 5.3.0.8]{Ko:2023}, these two $\F$-natural transformations are equivalent to the dotted-lax natural transformations
				\begin{align*}
					&\triangle((S(\dot{A}))(X)) \to S(\dot{A}) \cdot F \cdot P,
					\\
					&\triangle((S(\dot{B}))(X)) \to S(\dot{B}) \cdot F \cdot P,
				\end{align*}
				where $P \colon \bbEl{\Phi} \to \mathccal{I}$ is the canonical projection from the dotted $\F$-category of elements of $\Phi$ constructed in \cite[\S5.3]{Ko:2023}. Therefore, to say that the two $\F$-natural transformations are limit cones in $\bbK$, is equivalent to saying that the above dotted-lax natural transformations correspond to dotted-lax limits in \longref{Definition}{def:dotted_lim}. Since every morphism in $\bbEl{\Phi}$ is of the form $(d \in \mor\mathccal{I}, \omega)$, it is clear that every morphism  in $\bbEl{\Phi}$ is tight; also, all objects in  in $\bbEl{\Phi}$ are dotted. This allows us to prove our desired statement in a slightly more general setting, as follows.
				
				Let $(\calJ, \Sigma_\calJ, \Gamma_\calJ)$ be a dotted $2$-category. Let $D \colon \calJ \to \mathmybb{T}$ be an $\F$-functor, and $X$ be an object of $\mathmybb{T}$. Consider a dotted-lax natural transformation
				$$\triangle(X) \xrightarrow{\delta} D$$
				in $[\calJ, \mathmybb{T}]_{s, \Sigma_\calJ, \Gamma_\calJ}$. We prove that if both the images		
				\begin{align*}
					&\triangle((S(\dot{A}))(X)) \to S(\dot{A}) \cdot D,
					\\
					&\triangle((S(\dot{B}))(X)) \to S(\dot{B}) \cdot D,
				\end{align*}
				of $\delta$ under the post-compositions $(S(\dot{A}))_*$ and $(S(\dot{B}))_*$ are dotted-lax limit cones in $\bbK$, then the image
				$$\triangle(L(X)) \to L \cdot D$$
				of $\delta$ under the post-composition $L_*$ by $L$ is also a dotted-lax limit cone in $\bbK$.
				
				We first view $L \cdot D$ as an $\F$-functor $$\calJ \times \bbD \xrightarrow{\pi_1} \calJ \xrightarrow{D} \mathmybb{T} \xrightarrow{L} \bbK,$$
				where $\pi_1$ is the evident projection. We can then view $L \cdot \delta \colon \triangle(L(X)) \to L \cdot D$ as a dotted-lax natural transformation in $[\calJ \times \bbD, \bbK]_{s, \Sigma_\calJ \times \Sigma_\bbD, \Gamma_\calJ \times \Gamma_\bbD}$; here, a morphism is marked precisely when both the coordinates are marked, and an object is dotted precisely when both the coordinates are dotted. After that, we consider the $\F$-functor
				\begin{align*}
					\mathrm{ev}_{D(-)} \cdot S \colon \calJ \times \bbD &\to \bbK
					\\
					(J, \dot{A}) &\mapsto (S(\dot{A}))(D(J))
					\\
					(j \colon J_1 \to J_2, f \colon \dot{A} \leadsto \dot{B}) &\mapsto S(f)_{D(J_2)} \cdot (S(\dot{A}))(D(j)),
				\end{align*}
				where the functoriality is guaranteed by the fact that for any composable pair $J_1 \xrightarrow{j} J_2 \xrightarrow{k} J_3$ of morphisms in $\calJ$, we must have
				\begin{align*}
					S(f)_{D(J_3)} \cdot (S(\dot{A}))(D(k \cdot j)) &= S(f)_{D(J_3)} \cdot (S(\dot{A}))(D(k)) \cdot (S(\dot{A}))(D(j))
					\\
					&= (S(\dot{B}))(D(k)) \cdot S(f)_{D(J_2)} \cdot (S(\dot{A}))(D(j)),
				\end{align*}
				as $S(f)_{D(j)} = 1$, because $\calJ$ is chordate.
				
				We proceed to construct a dotted-lax natural transformation
				$$\eta^{D(-)}_{(-)} \colon L\cdot D \to \mathrm{ev}_{D(-)} \cdot S$$
				in $[\calJ \times \bbD, \bbK]_{s, \Sigma_\calJ \times \Sigma_\bbD, \Gamma_\calJ \times \Gamma_\bbD}$ as follows. Following \longref{}{diag:eta_X}, we set its $1$-component at $(J, \dot{A}) \in \ob(\calJ \times \bbD)$ to be $\eta^{D(J)}_{\dot{A}} \colon L(D(J)) \to (S(\dot{A}))(D(J))$, and its $2$-component at $(j \colon J_1 \to J_2, f \colon \dot{A} \leadsto \dot{B})$ to be the composite
				\[
				\begin{tikzcd}[ampersand replacement=\&,column sep=small]
					\& {L(D(J_1))} \&\& {L(D(J_2))} \& \\
					{(S(\dot{A}))(D(J_1))} \&\& {(S(\dot{B}))(D(J_1))} \&\& {(S(\dot{B}))(D(J_2))}
					\arrow["{ L(D(j)) = L_{D(j)}}", from=1-2, to=1-4]
					\arrow["{\eta^{D(J_1)}_{\dot{A}}}"', from=1-2, to=2-1]
					\arrow[""{name=0, anchor=center, inner sep=0}, "{\eta^{D(J_1)}_{\dot{B}}}", from=1-2, to=2-3]
					\arrow["{\eta^{D(j)}_{\dot{B}}}"{description}, draw=none, from=1-4, to=2-3]
					\arrow["{\eta^{D(J_2)}_{\dot{B}}}", from=1-4, to=2-5]
					\arrow["{S(f)_{D(J_1)}}"', loose, from=2-1, to=2-3]
					\arrow["{(S(\dot{B}))(D(j))}"', from=2-3, to=2-5]
					\arrow["{\eta^{D(J_1)}_f}", between={0.4}{0.6}, Rightarrow, from=2-1, to=0]
				\end{tikzcd}
				\]
				which we denote by $\eta^{D(j)}_f$. The data clearly assemble to a dotted-lax natural transformation, as $\eta^{D(J)}$ is one and $\eta_{\dot{B}}$ is an $\F$-natural transformation. Consequently, we obtain a composite of dotted-lax natural transformations
				$$\triangle(L(X)) \xrightarrow{L\cdot \delta} L\cdot D \xrightarrow{\eta^{D(-)}_{(-)}} \mathrm{ev}_{D(-)} \cdot S,$$
				as a tight morphism in $[\calJ \times \bbD, \bbK]_{s, \Sigma_\calJ \times \Sigma_\bbD, \Gamma_\calJ \times \Gamma_\bbD}$, whose $1$-component at $(J, \dot{A}) \in \ob(\calJ \times \bbD)$ is given by
				$$L(X) \xrightarrow{L(\delta_J)} L(D(J)) \xrightarrow{\eta^{D(J)}_{\dot{A}}} (S(\dot{A})(D(J))),$$
				and $2$-component at $(j \colon J_1 \to J_2, f)$ is given by
				\[
				\begin{tikzcd}[ampersand replacement=\&,column sep=small]
					\&\& {L(X)} \&\& \\
					\& {L(D(J_1))} \&\& {L(D(J_2))} \\
					{(S(\dot{A}))(D(J_1))} \&\& {(S(\dot{B}))(D(J_1))} \&\& {(S(\dot{B}))(D(J_2))}
					\arrow["{L(\delta_{J_1})}"', from=1-3, to=2-2]
					\arrow[""{name=0, anchor=center, inner sep=0}, "{L(\delta_{J_1})}", from=1-3, to=2-4]
					\arrow["{ L(D(j))}"{description}, from=2-2, to=2-4]
					\arrow["{\eta^{D(J_1)}_{\dot{A}}}"', from=2-2, to=3-1]
					\arrow[""{name=1, anchor=center, inner sep=0}, "{\eta^{D(J_1)}_{\dot{B}}}", from=2-2, to=3-3]
					\arrow["{\eta^{D(j)}_{\dot{B}}}"{description}, draw=none, from=2-4, to=3-3]
					\arrow["{\eta^{D(J_2)}_{\dot{B}}}", from=2-4, to=3-5]
					\arrow["{S(f)_{D(J_1)}}"', loose, from=3-1, to=3-3]
					\arrow["{(S(\dot{B}))(D(j))}"', from=3-3, to=3-5]
					\arrow["{L(\delta_j)}", between={0.4}{0.6}, Rightarrow, from=2-2, to=0]
					\arrow["{\eta^{D(J_1)}_f}", between={0.4}{0.6}, Rightarrow, from=3-1, to=1]
				\end{tikzcd}
				\]
				in $\bbK$.
				
				For an object $X$ of $\mathmybb{T}$, we can view $\mathrm{ev}_X \cdot S$ as an $\F$-functor
				$$\calJ \times \bbD \xrightarrow{\pi_1} \bbD \xrightarrow{S} \mathmybb{Fun}_{s, c}(\mathmybb{T}, \mathmybb{K}) \xrightarrow{\mathrm{ev}_X} \bbK.$$
				We construct another dotted-lax natural transformation
				$$(S(-))(\delta(-)) \colon \mathrm{ev}_X \cdot S \to \mathrm{ev}_{D(-)} \cdot S$$
				in $[\calJ \times \bbD, \bbK]_{s, \Sigma_\calJ \times \Sigma_\bbD, \Gamma_\calJ \times \Gamma_\bbD}$ as follows. We set its $1$-component at $(J, \dot{A})$ to be $(S(\dot{A}))(\delta_J) \colon (S(\dot{A}))(X) \to (S(\dot{A}))(D(J))$, and its $2$-component at $(j, f)$ to be the composite
				\[
				\begin{tikzcd}[ampersand replacement=\&,column sep=small]
					\& {(S(\dot{A}))(X)} \&\& {(S(\dot{B}))(X)} \& \\
					{(S(\dot{A}))(D(J_1))} \&\& {(S(\dot{B}))(D(J_1))} \&\& {(S(\dot{B}))(D(J_2))}
					\arrow["{S(f)_X}", loose, from=1-2, to=1-4]
					\arrow["{(S(\dot{A}))(\delta_{J_1})}"', from=1-2, to=2-1]
					\arrow["{S(f)_{\delta_{J_1}}}"{description}, draw=none, from=1-4, to=2-1]
					\arrow["{(S(\dot{B}))(\delta_{J_1})}"{description}, from=1-4, to=2-3]
					\arrow[""{name=0, anchor=center, inner sep=0}, "{(S(\dot{B}))(\delta_{J_2})}", from=1-4, to=2-5]
					\arrow["{S(f)_{D(J_1)}}"', loose, from=2-1, to=2-3]
					\arrow["{(S(\dot{B}))(D(j))}"', from=2-3, to=2-5]
					\arrow["{(S(\dot{B}))(\delta_{j})}"', between={0.4}{0.6}, Rightarrow, from=2-3, to=0]
				\end{tikzcd}
				\]
				which we denote by $S(f)(\delta_j)$. Note that here $S(f)_{\delta_{J_1}}$ is the identity because $\delta_{J_1}$ is always tight for any $J_1 \in \ob\calJ$. The data clearly assemble to a dotted-lax natural transformation, as $S(f)$ is a loose-lax natural transformation and $(S(\dot{B}))(\delta)$ is a dotted-lax natural transformation. As a consequence, we obtain a composite
				$$\triangle(L(X)) \xrightarrow{\eta^X} \mathrm{ev}_X \cdot S \xrightarrow{(S(-))(\delta(-))} \mathrm{ev}_{D(-)} \cdot S$$
				of dotted-lax natural transformations, which is a tight morphism in $[\calJ \times \bbD, \bbK]_{s, \Sigma_\calJ \times \Sigma_\bbD, \Gamma_\calJ \times \Gamma_\bbD}$, whose $1$-component at $(J, \dot{A})$ is given by the composite
				$$L(X) \xrightarrow{\eta^X_{\dot{A}}} (S(\dot{A}))(X) \xrightarrow{(S(\dot{A}))(\delta_J)} (S(\dot{A}))(D(J)),$$
				and $2$-component at $(j, f)$ is given by
				\[
				\begin{tikzcd}[ampersand replacement=\&,column sep=small]
					\&\& {L(X)} \&\& \\
					\& {(S(\dot{A}))(X)} \&\& {(S(\dot{B}))(X)} \\
					{(S(\dot{A}))(D(J_1))} \&\& {(S(\dot{B}))(D(J_1))} \&\& {(S(\dot{B}))(D(J_2))}
					\arrow["{\eta^X_{\dot{A}}}"', from=1-3, to=2-2]
					\arrow[""{name=0, anchor=center, inner sep=0}, "{\eta^X_{\dot{B}}}", from=1-3, to=2-4]
					\arrow["{S(f)_X}"{description}, loose, from=2-2, to=2-4]
					\arrow["{(S(\dot{A}))(\delta_{J_1})}"', from=2-2, to=3-1]
					\arrow["{S(f)_{\delta_{J_1}}}"{description}, draw=none, from=2-4, to=3-1]
					\arrow["{(S(\dot{B}))(\delta_{J_1})}"{description}, from=2-4, to=3-3]
					\arrow[""{name=1, anchor=center, inner sep=0}, "{(S(\dot{B}))(\delta_{J_2})}", from=2-4, to=3-5]
					\arrow["{S(f)_{D(J_1)}}"', loose, from=3-1, to=3-3]
					\arrow["{(S(\dot{B}))(D(j))}"', from=3-3, to=3-5]
					\arrow["{\eta^X_f}", between={0.4}{0.6}, Rightarrow, from=2-2, to=0]
					\arrow["{(S(\dot{B}))(\delta_{j})}"', between={0.4}{0.6}, Rightarrow, from=3-3, to=1]
				\end{tikzcd}
				\]
				in $\bbK$.
				
				We claim that the two composites of dotted-lax natural transformations are equal, i.e.,
				\begin{equation}
					\label{eqt:equal_trans}
					\eta^{D(-)}_{(-)} \cdot (L \cdot \delta) = S(-)(\delta(-)) \cdot \eta^X.
				\end{equation}
				Indeed, by the universal property of $L(D(J_1))$, the $2$-component of $S(-)(\delta(-)) \cdot \eta^X$ at $(j, f)$ is
				{
					\allowdisplaybreaks
					\begin{align*}
						(S(-)(\delta(-)) \cdot \eta^X)_{(j, f)}
						=
						\hspace{-2em}
						\begin{tikzcd}[ampersand replacement=\&,column sep=tiny]
							\&\& {L(X)} \&\& \\
							\& {(S(\dot{A}))(X)} \&\& {(S(\dot{B}))(X)} \\
							{(S(\dot{A}))(D(J_1))} \&\& {(S(\dot{B}))(D(J_1))} \&\& {(S(\dot{B}))(D(J_2))}
							\arrow["{\eta^X_{\dot{A}}}"', from=1-3, to=2-2]
							\arrow[""{name=0, anchor=center, inner sep=0}, "{\eta^X_{\dot{B}}}", from=1-3, to=2-4]
							\arrow["{S(f)_X}"{description}, loose, from=2-2, to=2-4]
							\arrow["{(S(\dot{A}))(\delta_{J_1})}"', from=2-2, to=3-1]
							\arrow["{S(f)_{\delta_{J_1}}}"{description}, draw=none, from=2-4, to=3-1]
							\arrow["{(S(\dot{B}))(\delta_{J_1})}"{description}, from=2-4, to=3-3]
							\arrow[""{name=1, anchor=center, inner sep=0}, "{(S(\dot{B}))(\delta_{J_2})}", from=2-4, to=3-5]
							\arrow["{S(f)_{D(J_1)}}"', loose, from=3-1, to=3-3]
							\arrow["{(S(\dot{B}))(D(j))}"', from=3-3, to=3-5]
							\arrow["{\eta^X_f}", between={0.4}{0.6}, Rightarrow, from=2-2, to=0]
							\arrow["{(S(\dot{B}))(\delta_{j})}"', between={0.4}{0.6}, Rightarrow, from=3-3, to=1]
						\end{tikzcd}
						&
						\\
						\stackunder{\longref{}{eqt:uni_L_t}}{$=$}
						\hspace{-2em}
						\begin{tikzcd}[ampersand replacement=\&,column sep=tiny]
							\&\& {L(X)} \&\& \\
							\& {L(D(J_1))} \&\& {(S(\dot{B}))(X)} \\
							{(S(\dot{A}))(D(J_1))} \&\& {(S(\dot{B}))(D(J_1))} \&\& {(S(\dot{B}))(D(J_2))}
							\arrow["{L(\delta_{J_1})}"', from=1-3, to=2-2]
							\arrow["{\eta^X_{\dot{B}}}", from=1-3, to=2-4]
							\arrow["{\eta^{\delta_{J_1}}_{\dot{B}}}"{description}, draw=none, from=1-3, to=3-3]
							\arrow["{\eta^{D(J_1)}_{\dot{A}}}"', from=2-2, to=3-1]
							\arrow[""{name=0, anchor=center, inner sep=0}, "{\eta^{D(J_1)}_{\dot{B}}}", from=2-2, to=3-3]
							\arrow["{(S(\dot{B}))(\delta_{J_1})}"{description}, from=2-4, to=3-3]
							\arrow[""{name=1, anchor=center, inner sep=0}, "{(S(\dot{B}))(\delta_{J_2})}", from=2-4, to=3-5]
							\arrow["{S(f)_{D(J_1)}}"', loose, from=3-1, to=3-3]
							\arrow["{(S(\dot{B}))(D(j))}"', from=3-3, to=3-5]
							\arrow["{\eta^{D(J_1)}_{f}}", between={0.4}{0.6}, Rightarrow, from=3-1, to=0]
							\arrow["{(S(\dot{B}))(\delta_{j})}"', between={0.4}{0.6}, Rightarrow, from=3-3, to=1]
						\end{tikzcd}
						&
						\\
						\stackunder{$\eta_{\dot{B}}$ is an $\F$-natural transformation}{$=$}
						\hspace{-6em}
						\begin{tikzcd}[ampersand replacement=\&,column sep=small]
							\&\& {L(X)} \&\& \\
							\& {L(D(J_1))} \&\& {L(D(J_2))} \\
							{(S(\dot{A}))(D(J_1))} \&\& {(S(\dot{B}))(D(J_1))} \&\& {(S(\dot{B}))(D(J_2))}
							\arrow["{L(\delta_{J_1})}"', from=1-3, to=2-2]
							\arrow[""{name=0, anchor=center, inner sep=0}, "{L(\delta_{J_1})}", from=1-3, to=2-4]
							\arrow["{ L(D(j))}"{description}, from=2-2, to=2-4]
							\arrow["{\eta^{D(J_1)}_{\dot{A}}}"', from=2-2, to=3-1]
							\arrow[""{name=1, anchor=center, inner sep=0}, "{\eta^{D(J_1)}_{\dot{B}}}", from=2-2, to=3-3]
							\arrow["{\eta^{D(j)}_{\dot{B}}}"{description}, draw=none, from=2-4, to=3-3]
							\arrow["{\eta^{D(J_2)}_{\dot{B}}}", from=2-4, to=3-5]
							\arrow["{S(f)_{D(J_1)}}"', loose, from=3-1, to=3-3]
							\arrow["{(S(\dot{B}))(D(j))}"', from=3-3, to=3-5]
							\arrow["{L(\delta_j)}", between={0.4}{0.6}, Rightarrow, from=2-2, to=0]
							\arrow["{\eta^{D(J_1)}_f}", between={0.4}{0.6}, Rightarrow, from=3-1, to=1]
						\end{tikzcd},
						&
					\end{align*}
				}
				which is exactly the $2$-component of $\eta^{D(-)}_{(-)} \cdot (L \cdot \delta)$ at $(j, f)$. Hence, \longref{Equation}{eqt:equal_trans} holds.
				
				\noindent\textbf{$\boldsymbol{1}$-dimensional universal property of $\boldsymbol{L \cdot \delta}$:}
				
				With all these necessary constructions, we proceed to establish the $1$-dimensional universal property of $L \cdot \delta$. Let $E$ be an object of $\bbK$, and $\epsilon \colon \triangle(E) \to L\cdot D$ be any arbitrary marked-lax natural transformation in $[\calJ, \bbK]_{s, \Sigma_\calJ, \Gamma_\calJ}$. Our goal is to show that there exists a unique morphism $w \colon E \leadsto L(X)$ in $\bbK$ such that $(L\cdot \delta) \cdot \triangle(w) = \epsilon$; furthermore, if $\epsilon$ is actually a dotted-lax natural transformation, then $w$ is tight.
				
				To begin with, we view $\epsilon \colon \triangle(E) \to L\cdot D$ as a marked-lax natural transformation in $[\calJ \times \bbD, \bbK]_{s, \Sigma_\calJ \times \Sigma_\bbD, \Gamma_\calJ \times \Gamma_\bbD}$, by composing with the canonical projection $\calJ \times \bbD \to \calJ$. Then, consider the composite
				$$\triangle(E) \xrightarrow{\epsilon} L\cdot D \xrightarrow{\eta^{D(-)}_{(-)}} \mathrm{ev}_{D(-)} \cdot S$$
				of marked-lax natural transformations. Since
				\begin{align*}
					&\triangle((S(\dot{A}))(X)) \to S(\dot{A}) \cdot D,
					\\
					&\triangle((S(\dot{B}))(X)) \to S(\dot{B}) \cdot D,
				\end{align*}
				are limit cones by assumption, so there exists a unique morphism $\alpha_{\dot{A}} \colon E \leadsto (S(\dot{A}))(X)$ and a unique morphism  $\alpha_{\dot{B}} \colon E \leadsto (S(\dot{B}))(X)$ such that
				\begin{equation}
					\label{eqt:epsilon_A}
					\begin{tikzcd}[ampersand replacement=\&,column sep=small]
						\& E \& \\
						\& {(S(\dot{A}))(X)} \\
						{(S(\dot{A}))(D(J_1))} \&\& {(S(\dot{A}))(D(J_2))}
						\arrow["{\exists!\alpha_{\dot{A}}}", dashed, loose, from=1-2, to=2-2]
						\arrow["{(S(\dot{A}))(\delta_{J_1})}"', from=2-2, to=3-1]
						\arrow[""{name=0, anchor=center, inner sep=0}, "{(S(\dot{A}))(\delta_{J_2})}", from=2-2, to=3-3]
						\arrow["{(S(\dot{A}))(D(j))}"', from=3-1, to=3-3]
						\arrow["{(S(\dot{A}))(\delta_{j})}"', between={0.4}{0.6}, Rightarrow, from=3-1, to=0]
					\end{tikzcd}
					=
					\begin{tikzcd}[ampersand replacement=\&,column sep=small]
						\& E \& \\
						{L(D(J_1))} \&\& {L(D(J_2))} \\
						{(S(\dot{A}))(D(J_1))} \&\& {(S(\dot{A}))(D(J_2))}
						\arrow["{\epsilon_{J_1}}"', loose, from=1-2, to=2-1]
						\arrow[""{name=0, anchor=center, inner sep=0}, "{\epsilon_{J_2}}", loose, from=1-2, to=2-3]
						\arrow["{L(D(j))}"{description}, from=2-1, to=2-3]
						\arrow["{\eta^{D(J_1)}_{\dot{A}}}"', from=2-1, to=3-1]
						\arrow["{\eta^{D(j)}_{\dot{A}}}"{description}, draw=none, from=2-1, to=3-3]
						\arrow["{\eta^{D(J_2)}_{\dot{A}}}", from=2-3, to=3-3]
						\arrow["{(S(\dot{A}))(D(j))}"', from=3-1, to=3-3]
						\arrow["{\epsilon_j}", between={0.4}{0.6}, Rightarrow, from=2-1, to=0]
					\end{tikzcd}
				\end{equation}
				and
				\begin{equation*}
					\begin{tikzcd}[ampersand replacement=\&,column sep=small]
						\& E \& \\
						\& {(S(\dot{B}))(X)} \\
						{(S(\dot{B}))(D(J_1))} \&\& {(S(\dot{B}))(D(J_2))}
						\arrow["{\exists!\alpha_{\dot{B}}}", dashed, loose, from=1-2, to=2-2]
						\arrow["{(S(\dot{B}))(\delta_{J_1})}"', from=2-2, to=3-1]
						\arrow[""{name=0, anchor=center, inner sep=0}, "{(S(\dot{B}))(\delta_{J_2})}", from=2-2, to=3-3]
						\arrow["{(S(\dot{B}))(D(j))}"', from=3-1, to=3-3]
						\arrow["{(S(\dot{B}))(\delta_{j})}"', between={0.4}{0.6}, Rightarrow, from=3-1, to=0]
					\end{tikzcd}
					=
					\begin{tikzcd}[ampersand replacement=\&,column sep=small]
						\& E \& \\
						{L(D(J_1))} \&\& {L(D(J_2))} \\
						{(S(\dot{B}))(D(J_1))} \&\& {(S(\dot{B}))(D(J_2))}
						\arrow["{\epsilon_{J_1}}"', loose, from=1-2, to=2-1]
						\arrow[""{name=0, anchor=center, inner sep=0}, "{\epsilon_{J_2}}", loose, from=1-2, to=2-3]
						\arrow["{L(D(j))}"{description}, from=2-1, to=2-3]
						\arrow["{\eta^{D(J_1)}_{\dot{B}}}"', from=2-1, to=3-1]
						\arrow["{\eta^{D(j)}_{\dot{B}}}"{description}, draw=none, from=2-1, to=3-3]
						\arrow["{\eta^{D(J_2)}_{\dot{B}}}", from=2-3, to=3-3]
						\arrow["{(S(\dot{B}))(D(j))}"', from=3-1, to=3-3]
						\arrow["{\epsilon_j}", between={0.4}{0.6}, Rightarrow, from=2-1, to=0]
					\end{tikzcd},
				\end{equation*}
				for any $j \colon J_1 \to J_2$ in $\calJ$. Clearly, if $\epsilon$ is a dotted-lax natural transformation, then both $\alpha_{\dot{A}}$ and $\alpha_{\dot{B}}$ would be tight. Let $\varphi_j$ denote the composite
				\[
				\begin{tikzcd}[ampersand replacement=\&]
					\& E \& \\
					\& {(S(\dot{A}))(X)} \\
					{(S(\dot{A}))(D(J_1))} \&\& {(S(\dot{A}))(D(J_1))} \\
					{(S(\dot{B}))(D(J_1))} \&\& {(S(\dot{B}))(D(J_2))}
					\arrow["{\alpha_{\dot{A}}}", loose, from=1-2, to=2-2]
					\arrow["{(S(\dot{A}))(\delta_{J_1})}"', from=2-2, to=3-1]
					\arrow[""{name=0, anchor=center, inner sep=0}, "{(S(\dot{A}))(\delta_{J_2})}", from=2-2, to=3-3]
					\arrow["{(S(\dot{A}))(D(j))}"{description}, from=3-1, to=3-3]
					\arrow["{S(f)_{D(J_1)}}"', loose, from=3-1, to=4-1]
					\arrow["{S(f)_{D(j)}}"{description}, draw=none, from=3-3, to=4-1]
					\arrow["{S(f)_{D(J_2)}}", loose, from=3-3, to=4-3]
					\arrow["{(S(\dot{B}))(D(j))}"', from=4-1, to=4-3]
					\arrow["{(S(\dot{A}))(\delta_{j})}", between={0.4}{0.6}, Rightarrow, from=3-1, to=0]
				\end{tikzcd};
				\]
				here, $S(f)_{D(j)}$ is the identity, as $j$ is always tight for any $j \in \calJ$. Obviously, $\{\varphi_j\}_{j \in \ob\calJ}$ induces a marked-lax natural transformation
				$$\varphi \colon \triangle(E) \to (S(\dot{B}))(D(-))$$
				in $[\calJ, \bbK]_{s, \Sigma_\calJ, \Gamma_\calJ}$. Then, we construct a modification $\Xi \colon \varphi \colon \eta^{D(-)}_{\dot{B}} \cdot \epsilon$ as follows. For an object $J$ of $\calJ$, let $\Xi_J$ denote the composite
				\[
				\begin{tikzcd}[ampersand replacement=\&]
					E \&\& {L(D(J))} \\
					{(S(\dot{A}))(X)} \& {(S(\dot{A}))(D(J))} \\
					{(S(\dot{B}))(X)} \&\& {(S(\dot{B}))(X)}
					\arrow["{\epsilon_J}", loose, from=1-1, to=1-3]
					\arrow["{\alpha_{\dot{A}}}"', loose, from=1-1, to=2-1]
					\arrow["{\eta^{D(J)}_{\dot{A}}}"', from=1-3, to=2-2]
					\arrow[""{name=0, anchor=center, inner sep=0}, "{\eta^{D(J)}_{\dot{B}}}", from=1-3, to=3-3]
					\arrow["{(S(\dot{A}))(\delta_J)}", from=2-1, to=2-2]
					\arrow["{S(f)_X}"', loose, from=2-1, to=3-1]
					\arrow["{S(f)_{\delta_J}}"{description}, draw=none, from=2-2, to=3-1]
					\arrow["{S(f)_{D(J)}}"', loose, from=2-2, to=3-3]
					\arrow["{(S(\dot{B}))(\delta_J)}"', from=3-1, to=3-3]
					\arrow["{\eta^{D(J)}_f}"', between={0.4}{0.7}, Rightarrow, from=2-2, to=0]
				\end{tikzcd}
				\]
				in $\bbK$. We have
				{
					\allowdisplaybreaks
					\begin{align*}
						\begin{tikzcd}[ampersand replacement=\&,column sep=tiny]
							E \&\& {L(D(J_2))} \\
							{(S(\dot{A}))(X)} \\
							{(S(\dot{A}))(D(J_1))} \& {(S(\dot{A}))(D(J_2))} \\
							{(S(\dot{B}))(D(J_1))} \&\& {(S(\dot{B}))(D(J_2))}
							\arrow["{\epsilon_{J_2}}", loose, from=1-1, to=1-3]
							\arrow["{\alpha_{\dot{A}}}"', loose, from=1-1, to=2-1]
							\arrow["{\eta^{D(J_2)}_{\dot{A}}}"', from=1-3, to=3-2]
							\arrow[""{name=0, anchor=center, inner sep=0}, "{\eta^{D(J_2)}_{\dot{B}}}", from=1-3, to=4-3]
							\arrow["{(S(\dot{A}))(\delta_{J_1})}"', from=2-1, to=3-1]
							\arrow[""{name=1, anchor=center, inner sep=0}, "{(S(\dot{A}))(\delta_{J_2})}", from=2-1, to=3-2]
							\arrow["{(S(\dot{A}))(D(j))}"', from=3-1, to=3-2]
							\arrow["{S(f)_{D(J_1)}}"', loose, from=3-1, to=4-1]
							\arrow["{S(f)_{D(J_2)}}", loose, from=3-2, to=4-3]
							\arrow[""{name=2, anchor=center, inner sep=0}, "{(S(\dot{B}))(D(j))}"', from=4-1, to=4-3]
							\arrow["{\scalebox{0.5}{$(S(\dot{A}))(\delta_j)$}}", between={0.2}{1}, Rightarrow, from=3-1, to=1]
							\arrow["{S(f)_{D(j)}}"{description}, draw=none, from=3-2, to=2]
							\arrow["{\eta^{D(J_2)}_f}", between={0.3}{0.7}, Rightarrow, from=3-2, to=0]
						\end{tikzcd}
						\hspace{-1.5em}
						\stackunder{\longref{}{eqt:epsilon_A}}{$=$}
						\hspace{-1.5em}
						\begin{tikzcd}[ampersand replacement=\&,column sep=tiny]
							E \&\& {L(D(J_2))} \\
							{L(D(J_1))} \\
							{(S(\dot{A}))(D(J_1))} \& {(S(\dot{A}))(D(J_2))} \\
							{(S(\dot{B}))(D(J_1))} \&\& {(S(\dot{B}))(D(J_2))}
							\arrow[""{name=0, anchor=center, inner sep=0}, "{\epsilon_{J_2}}", loose, from=1-1, to=1-3]
							\arrow["{\epsilon_{J_1}}"', loose, from=1-1, to=2-1]
							\arrow[""{name=1, anchor=center, inner sep=0}, "{\eta^{D(J_2)}_{\dot{A}}}"', from=1-3, to=3-2]
							\arrow[""{name=2, anchor=center, inner sep=0}, "{\eta^{D(J_2)}_{\dot{B}}}", from=1-3, to=4-3]
							\arrow["{L(D(j))}"{description}, from=2-1, to=1-3]
							\arrow[""{name=3, anchor=center, inner sep=0}, "{\eta^{D(J_1)}_{\dot{A}}}"', from=2-1, to=3-1]
							\arrow["{(S(\dot{A}))(D(j))}"', from=3-1, to=3-2]
							\arrow["{S(f)_{D(J_1)}}"', loose, from=3-1, to=4-1]
							\arrow["{S(f)_{D(J_2)}}", loose, from=3-2, to=4-3]
							\arrow[""{name=4, anchor=center, inner sep=0}, "{(S(\dot{B}))(D(j))}"', from=4-1, to=4-3]
							\arrow["{\eta^{D(j)}_{\dot{A}}}"{description}, draw=none, from=1, to=3]
							\arrow["{\epsilon_j}", between={0.4}{0.7}, Rightarrow, from=2-1, to=0]
							\arrow["{S(f)_{D(j)}}"{description}, draw=none, from=3-2, to=4]
							\arrow["{\eta^{D(J_2)}_f}", between={0.3}{0.7}, Rightarrow, from=3-2, to=2]
						\end{tikzcd}
						&
						\\
						\stackunder{$\eta_f$ is a modification}{$=$}
						\begin{tikzcd}[ampersand replacement=\&,column sep=tiny]
							E \&\&\& {L(D(J_2))} \\
							{L(D(J_1))} \\
							\\
							{(S(\dot{A}))(D(J_1))} \&\& {(S(\dot{B}))(D(J_1))} \& {(S(\dot{B}))(D(J_2))}
							\arrow[""{name=0, anchor=center, inner sep=0}, "{\epsilon_{J_2}}", loose, from=1-1, to=1-4]
							\arrow["{\epsilon_{J_1}}"', loose, from=1-1, to=2-1]
							\arrow["{\eta^{D(J_2)}_{\dot{B}}}", from=1-4, to=4-4]
							\arrow[""{name=1, anchor=center, inner sep=0}, "{L(D(j))}"', from=2-1, to=1-4]
							\arrow["{\eta^{D(J_1)}_{\dot{A}}}"', from=2-1, to=4-1]
							\arrow[""{name=2, anchor=center, inner sep=0}, "{\eta^{D(J_1)}_{\dot{B}}}", from=2-1, to=4-3]
							\arrow["{S(f)_{D(J_1)}}"', loose, from=4-1, to=4-3]
							\arrow["{(S(\dot{B}))(D(j))}"', from=4-3, to=4-4]
							\arrow["{\epsilon_j}", between={0.4}{0.7}, Rightarrow, from=2-1, to=0]
							\arrow["{\eta^{D(j)}_{\dot{B}}}"{description}, draw=none, from=1, to=4-4]
							\arrow["{\eta^{D(J_1)}_f}", between={0.3}{0.7}, Rightarrow, from=4-1, to=2]
						\end{tikzcd}.
					\end{align*}
				}
				In other words, $\{\Xi_J\}_{J\in\ob\calJ}$ satisfy the modification axiom. Now, by the $2$-dimensional universal property of $(S(\dot{B}))(X)$, there is a unique $2$-morphism $\alpha_f \colon \alpha_{\dot{B}} \to S(f)_X \cdot \alpha_{\dot{A}}$ such that
				\begin{equation}
					\label{eqt:alpha_f}
					\begin{tikzcd}[ampersand replacement=\&,column sep=small]
						\& E \& \\
						{(S(\dot{A}))(X)} \\
						\& {(S(\dot{B}))(X)} \\
						{(S(\dot{B}))(D(J_1))} \&\& {(S(\dot{B}))(D(J_2))}
						\arrow["{\alpha_{\dot{A}}}"', loose, from=1-2, to=2-1]
						\arrow[""{name=0, anchor=center, inner sep=0}, "{\alpha_{\dot{B}}}", loose, from=1-2, to=3-2]
						\arrow["{S(f)_X}"', loose, from=2-1, to=3-2]
						\arrow["{(S(\dot{B}))(\delta_{J_1})}"', from=3-2, to=4-1]
						\arrow[""{name=1, anchor=center, inner sep=0}, "{(S(\dot{B}))(\delta_{J_2})}", from=3-2, to=4-3]
						\arrow["{(S(\dot{B}))(D(j))}"', from=4-1, to=4-3]
						\arrow["{\exists!\alpha_f}", between={0.3}{0.7}, Rightarrow, dashed, from=2-1, to=0]
						\arrow["{(S(\dot{B}))(\delta_{j})}"', between={0.4}{0.6}, Rightarrow, from=4-1, to=1]
					\end{tikzcd}
					\hspace{-1em}
					=
					\begin{tikzcd}[ampersand replacement=\&,column sep=tiny]
						E \&\&\& {L(D(J_2))} \\
						{L(D(J_1))} \\
						\\
						{(S(\dot{A}))(D(J_1))} \&\& {(S(\dot{B}))(D(J_1))} \& {(S(\dot{B}))(D(J_2))}
						\arrow[""{name=0, anchor=center, inner sep=0}, "{\epsilon_{J_2}}", loose, from=1-1, to=1-4]
						\arrow["{\epsilon_{J_1}}"', loose, from=1-1, to=2-1]
						\arrow["{\eta^{D(J_2)}_{\dot{B}}}", from=1-4, to=4-4]
						\arrow[""{name=1, anchor=center, inner sep=0}, "{L(D(j))}"', from=2-1, to=1-4]
						\arrow["{\eta^{D(J_1)}_{\dot{A}}}"', from=2-1, to=4-1]
						\arrow[""{name=2, anchor=center, inner sep=0}, "{\eta^{D(J_1)}_{\dot{B}}}", from=2-1, to=4-3]
						\arrow["{S(f)_{D(J_1)}}"', loose, from=4-1, to=4-3]
						\arrow["{(S(\dot{B}))(D(j))}"', from=4-3, to=4-4]
						\arrow["{\epsilon_j}", between={0.4}{0.7}, Rightarrow, from=2-1, to=0]
						\arrow["{\eta^{D(j)}_{\dot{B}}}"{description}, draw=none, from=1, to=4-4]
						\arrow["{\eta^{D(J_1)}_f}", between={0.3}{0.7}, Rightarrow, from=4-1, to=2]
					\end{tikzcd},
				\end{equation}
				which is equal to
				\[
				\begin{tikzcd}[ampersand replacement=\&,column sep=small]
					\&\& E \&\& \\
					\& {(S(\dot{A}))(X)} \&\& {(S(\dot{B}))(X)} \\
					{(S(\dot{A}))(D(J_1))} \&\& {(S(\dot{B}))(D(J_1))} \&\& {(S(\dot{B}))(D(J_2))}
					\arrow["{\alpha_{\dot{A}}}"', loose, from=1-3, to=2-2]
					\arrow[""{name=0, anchor=center, inner sep=0}, "{\alpha_{\dot{B}}}", loose, from=1-3, to=2-4]
					\arrow[""{name=1, anchor=center, inner sep=0}, "{S(f)_X}"{description}, loose, from=2-2, to=2-4]
					\arrow["{(S(\dot{A}))(\delta_{J_1})}"', from=2-2, to=3-1]
					\arrow["{(S(\dot{B}))(\delta_{J_1})}"{description}, from=2-4, to=3-3]
					\arrow[""{name=2, anchor=center, inner sep=0}, "{(S(\dot{B}))(\delta_{J_2})}", from=2-4, to=3-5]
					\arrow[""{name=3, anchor=center, inner sep=0}, "{S(f)_{D(J_1)}}"', loose, from=3-1, to=3-3]
					\arrow["{(S(\dot{B}))(D(j))}"', from=3-3, to=3-5]
					\arrow["{\alpha_f}", between={0.3}{0.5}, Rightarrow, from=2-2, to=0]
					\arrow["{S(f)_{\delta_{J_1}}}"{description}, draw=none, from=1, to=3]
					\arrow["{(S(\dot{B}))(\delta_{j})}"', between={0.4}{0.6}, Rightarrow, from=3-3, to=2]
				\end{tikzcd}.
				\]
				We claim that $\{\alpha_{\dot{A}}\}_{\dot{A} \in \ob\bbD}$ gives rise to a marked-lax natural transformation $\alpha \colon \triangle(E) \to \mathrm{ev}_X \cdot S$ in $[\calJ \times \bbD, \bbK]_{s, \Sigma_\calJ \times \Sigma_\bbD, \Gamma_\calJ \times \Gamma_\bbD}$, whose $2$-component at $(j, f)$ is given by $\alpha_f$; if $\epsilon$ is in fact a dotted-lax natural transformation, then so is $\alpha$. Since there is only one non-identity morphism in $\bbD$, it is trivial that $\alpha_g \cdot \alpha_f = \alpha_{gf}$ for composable $g, f \in \mor\bbD$. The naturality of $\alpha$ follows from the fact that both $S(f)$ and $\eta^{D(J_1)}$ fulfil naturality. Finally, since only the identities in $\bbD$ are marked, so $\alpha_f = 1$ for marked $f$ in $\bbD$. It is also clear that if $\epsilon$ is a dotted-lax natural transformation, then $\alpha$ becomes a dotted-lax natural transformation.
				
				Altogether, we obtain $\alpha \colon \triangle(E) \to \mathrm{ev}_X \cdot S$ such that the composite
				$$\triangle(E) \xrightarrow{\alpha} \mathrm{ev}_X \cdot S \xrightarrow{(S(-))(\delta(-))} \mathrm{ev}_{D(-)} \cdot S$$
				of marked-lax natural transformations is equal to the composite
				$$\triangle(E) \xrightarrow{\epsilon} L\cdot D \xrightarrow{\eta^{D(-)}_{(-)}} \mathrm{ev}_{D(-)} \cdot S$$
				of marked-lax natural transformations. Now, recall that $\eta^X \colon \triangle(L(X)) \to \mathrm{ev}_X \cdot S$ is the limit cone for $\dotlim{l}{\mathrm{ev}_X \cdot S}$. Therefore, by the universal property of $\dotlim{l}{\mathrm{ev}_X \cdot S}$, there is a unique morphism $w \colon E \leadsto L(X)$ in $\bbK$ such that $\eta^X \cdot \triangle(w) = \alpha$, where $\triangle(w) \colon \triangle(E) \to \triangle(L(X))$ is the canonically induced marked-lax natural transformation. Note that if $\epsilon$ is indeed a dotted-lax natural transformation, then $w$ would be tight, and that $\triangle(w)$ would then be a dotted-lax natural transformation. We then attain
				\begin{align*}
					\eta^{D(-)} \cdot \epsilon &= (S(-))(\delta(-))\cdot \alpha &
					\\
					&= (S(-))(\delta(-))\cdot \eta^X \cdot \triangle(w) &
					\\
					&= \eta^{D(-)}_{(-)} \cdot (L \cdot \delta) \cdot \triangle(w) & \text{(by \longref{Equation}{eqt:equal_trans})}
				\end{align*}
				in  $[\calJ \times \bbD, \bbK]_{s, \Sigma_\calJ \times \Sigma_\bbD, \Gamma_\calJ \times \Gamma_\bbD}$. Then,  for any object $J$ of $\calJ$, and a morphism $f \colon \dot{A} \leadsto \dot{B}$ in $\bbD$, we have
				$$\eta^{D(J)}_f \cdot \epsilon_J = \eta^{D(J)}_f \cdot L(\delta_J) \cdot w;$$
				simiarly, for a morphism $j \colon J_1 \to J_2$ in $\calJ$, we have
				$$\eta^{D(J_2)}_f \cdot \epsilon_j = \eta^{D(J_2)}_f \cdot L(\delta_j) \cdot w.$$
				Now, by the universal property of the limit cone $\eta^{D(J)}$ for $\dotlim{l}{\mathrm{ev}_{D(J)} \cdot S}$, we conclude that
				$$\epsilon = (L\cdot\delta)\cdot \triangle(w)$$
				in $[\calJ, \bbK]_{s, \Sigma_\calJ, \Gamma_\calJ}$.
				
				
				In summary, we have just shown that given any arbitrary marked (resp. dotted)-lax natural transformation $\epsilon \colon \triangle(E) \to L\cdot D$ in $[\calJ, \bbK]_{s, \Sigma_\calJ, \Gamma_J}$, there exists a unique loose (resp. tight) morphism $w \colon E \leadsto L(X)$ in $\bbK$ such that $\epsilon = (L\cdot\delta)\cdot \triangle(w)$. This gives the $1$-dimensional unversal property for $L \cdot \delta$.
				
				\noindent\textbf{$\boldsymbol{2}$-dimensional universal property of $\boldsymbol{L \cdot \delta}$:}
				
				We proceed to establish the $2$-dimensional universal property of $L \cdot \delta$. 
				
				Suppose $\epsilon_1, \epsilon_2 \colon \triangle(E) \rightrightarrows L\cdot D$ are marked-lax natural transformations, which, according to the last paragraph, induce $\alpha_1, \alpha_2 \colon \triangle(E) \rightrightarrows \mathrm{ev}_X \cdot S$ and hence loose morphisms $w_1, w_2$ in $\bbK$, respectively, satisfying the desired equations. Let $\xi \colon \epsilon_1 \to \epsilon_2$ be a modification. Then, by the universal property of $(S(\dot{A}))(X)$, there is a unique $2$-morphism
				\[
				\begin{tikzcd}[ampersand replacement=\&]
					E \&\& {(S(\dot{A}))(X)}
					\arrow[""{name=0, anchor=center, inner sep=0}, "{{\alpha_2}_{\dot{A}}}"', curve={height=12pt}, loose, from=1-1, to=1-3, in = 210, out = 315]
					\arrow[""{name=1, anchor=center, inner sep=0}, "{{\alpha_1}_{\dot{A}}}", curve={height=-12pt}, loose, from=1-1, to=1-3, in = 150, out = 45]
					\arrow["{{\mathscr{A}}_{\dot{A}}}", between={0.3}{0.7}, Rightarrow, from=1, to=0]
				\end{tikzcd}
				\]
				in $\bbK$ such that
				{
					\allowdisplaybreaks
					\begin{align*}
						\begin{tikzcd}[ampersand replacement=\&,row sep=large]
							\& E \& \\
							\& {(S(\dot{A}))(X)} \\
							{(S(\dot{A}))(D(J_1))} \&\& {(S(\dot{A}))(D(J_2))}
							\arrow[""{name=0, anchor=center, inner sep=0}, "{{\alpha_2}_{\dot{A}}}", curve={height=-12pt}, loose, from=1-2, to=2-2]
							\arrow[""{name=1, anchor=center, inner sep=0}, "{{\alpha_1}_{\dot{A}}}"', curve={height=12pt}, loose, from=1-2, to=2-2]
							\arrow["{(S(\dot{A}))(\delta_{J_1})}"', from=2-2, to=3-1]
							\arrow[""{name=2, anchor=center, inner sep=0}, "{(S(\dot{A}))(\delta_{J_2})}", from=2-2, to=3-3]
							\arrow["{(S(\dot{A}))(D(j))}"', from=3-1, to=3-3]
							\arrow["{\exists!{\mathscr{A}_{\dot{A}}}}"', between={0.2}{0.8}, Rightarrow, dashed, from=1, to=0]
							\arrow["{(S(\dot{A}))(\delta_{j})}"', between={0.4}{0.6}, Rightarrow, from=3-1, to=2]
						\end{tikzcd}
						=
						\begin{tikzcd}[ampersand replacement=\&]
							\& E \& \\
							{L(D(J_1))} \&\& {L(D(J_2))} \\
							{(S(\dot{A}))(D(J_1))} \&\& {(S(\dot{A}))(D(J_2))}
							\arrow[""{name=0, anchor=center, inner sep=0}, "{{\epsilon_2}_{J_1}}"{description}, curve={height=-12pt}, loose, from=1-2, to=2-1]
							\arrow[""{name=1, anchor=center, inner sep=0}, "{{\epsilon_1}_{J_1}}"', curve={height=12pt}, loose, from=1-2, to=2-1]
							\arrow[""{name=2, anchor=center, inner sep=0}, "{{\epsilon_2}_{J_2}}", loose, from=1-2, to=2-3]
							\arrow[""{name=3, anchor=center, inner sep=0}, "{L(D(j))}"{description}, from=2-1, to=2-3]
							\arrow["{\eta^{D(J_1)}_{\dot{A}}}"', from=2-1, to=3-1]
							\arrow["{\eta^{D(j)}_{\dot{A}}}"{description}, draw=none, from=2-1, to=3-3]
							\arrow["{\eta^{D(J_2)}_{\dot{A}}}", from=2-3, to=3-3]
							\arrow["{(S(\dot{A}))(D(j))}"', from=3-1, to=3-3]
							\arrow["{\xi_{J_1}}"', between={0.2}{0.8}, Rightarrow, from=1, to=0]
							\arrow["{{\epsilon_2}_j}", between={0.4}{0.6}, Rightarrow, from=3, to=2]
						\end{tikzcd}
						&
						\\
						\stackunder{$\xi$ is a modification}{$=$}
						\begin{tikzcd}[ampersand replacement=\&]
							\& E \& \\
							{L(D(J_1))} \&\& {L(D(J_2))} \\
							{(S(\dot{A}))(D(J_1))} \&\& {(S(\dot{A}))(D(J_2))}
							\arrow["{{\epsilon_1}_{J_1}}"', loose, from=1-2, to=2-1]
							\arrow[""{name=0, anchor=center, inner sep=0}, "{{\epsilon_1}_{J_2}}"{description}, curve={height=12pt}, loose, from=1-2, to=2-3]
							\arrow[""{name=1, anchor=center, inner sep=0}, "{{\epsilon_2}_{J_2}}", curve={height=-12pt}, loose, from=1-2, to=2-3]
							\arrow["{L(D(j))}"{description}, from=2-1, to=2-3]
							\arrow["{\eta^{D(J_1)}_{\dot{A}}}"', from=2-1, to=3-1]
							\arrow["{\eta^{D(j)}_{\dot{A}}}"{description}, draw=none, from=2-1, to=3-3]
							\arrow["{\eta^{D(J_2)}_{\dot{A}}}", from=2-3, to=3-3]
							\arrow["{(S(\dot{A}))(D(j))}"', from=3-1, to=3-3]
							\arrow["{\xi_{J_2}}"', between={0.2}{0.8}, Rightarrow, from=0, to=1]
							\arrow["{{\epsilon_1}_j}", between={0.4}{0.6}, Rightarrow, from=2-1, to=0]
						\end{tikzcd}
						&
					\end{align*}
				}
				In particular,
				\begin{equation}
					\label{eqt:scrA_leg_1}
					(S(\dot{A}))(\delta_{J_1}) \cdot {\alpha_{2}}_{\dot{A}} = \eta^{D(J_1)}_{\dot{A}} \cdot \xi_{J_1},
				\end{equation}
				\begin{equation}
					\label{eqt:scrA_leg_2}
					(S(\dot{A}))(\delta_{J_2}) \cdot {\alpha_{2}}_{\dot{A}} = \eta^{D(J_2)}_{\dot{A}} \cdot \xi_{J_2}.
				\end{equation}
				We claim that $\{\mathscr{A}_{\dot{A}}\}_{\dot{A} \in \ob\bbD}$ assembles to a modification $\scrA \colon \alpha_1 \to \alpha_2$. In fact, for any $j \colon J_1 \to J_2$ in $\calJ$, we have
				{
					\allowdisplaybreaks
					\begin{align*}
						&
						\quad\quad\quad\quad\quad\quad\quad\quad\quad\quad
						\begin{tikzcd}[ampersand replacement=\&]
							\& E \& \\
							{(S(\dot{A}))(X)} \\
							\& {(S(\dot{B}))(X)} \\
							{(S(\dot{B}))(D(J_1))} \&\& {(S(\dot{B}))(D(J_2))}
							\arrow[""{name=0, anchor=center, inner sep=0}, "{{\alpha_2}_{\dot{A}}}"{description}, curve={height=-12pt}, loose, from=1-2, to=2-1]
							\arrow[""{name=1, anchor=center, inner sep=0}, "{{\alpha_1}_{\dot{A}}}"', curve={height=12pt}, loose, from=1-2, to=2-1]
							\arrow[""{name=2, anchor=center, inner sep=0}, "{{\alpha_2}_{\dot{B}}}", loose, from=1-2, to=3-2]
							\arrow["{S(f)_X}"', loose, from=2-1, to=3-2]
							\arrow["{(S(\dot{B}))(\delta_{J_1})}"', from=3-2, to=4-1]
							\arrow[""{name=3, anchor=center, inner sep=0}, "{(S(\dot{B}))(\delta_{J_2})}", from=3-2, to=4-3]
							\arrow["{(S(\dot{B}))(D(j))}"', from=4-1, to=4-3]
							\arrow["{\mathscr{A}_{\dot{A}}}"', between={0.2}{0.8}, Rightarrow, from=1, to=0]
							\arrow["{{\alpha_2}_f}"', between={0.3}{0.7}, Rightarrow, from=2-1, to=2]
							\arrow["{(S(\dot{B}))(\delta_{j})}"', between={0.4}{0.6}, Rightarrow, from=4-1, to=3]
						\end{tikzcd}
						\\
						&=
						\begin{tikzcd}[ampersand replacement=\&]
							\&\& E \&\& \\
							\\
							\& {(S(\dot{A}))(X)} \&\& {(S(\dot{B}))(X)} \\
							{(S(\dot{A}))(D(J_1))} \&\& {(S(\dot{B}))(D(J_1))} \&\& {(S(\dot{B}))(D(J_2))}
							\arrow[""{name=0, anchor=center, inner sep=0}, "{{\alpha_2}_{\dot{A}}}"{description}, curve={height=-12pt}, loose, from=1-3, to=3-2]
							\arrow[""{name=1, anchor=center, inner sep=0}, "{{\alpha_1}_{\dot{A}}}"', curve={height=12pt}, loose, from=1-3, to=3-2]
							\arrow[""{name=2, anchor=center, inner sep=0}, "{{\alpha_2}_{\dot{B}}}", loose, from=1-3, to=3-4]
							\arrow[""{name=3, anchor=center, inner sep=0}, "{S(f)_X}"{description}, loose, from=3-2, to=3-4]
							\arrow["{(S(\dot{A}))(\delta_{J_1})}"', from=3-2, to=4-1]
							\arrow["{S(f)_{\delta_{J_1}}}"{description}, draw=none, from=3-2, to=4-3]
							\arrow["{(S(\dot{B}))(\delta_{J_1})}"{description}, from=3-4, to=4-3]
							\arrow[""{name=4, anchor=center, inner sep=0}, "{(S(\dot{B}))(\delta_{J_2})}", from=3-4, to=4-5]
							\arrow["{S(f)_{D(J_1)}}"', loose, from=4-1, to=4-3]
							\arrow["{(S(\dot{B}))(D(j))}"', from=4-3, to=4-5]
							\arrow["{\mathscr{A}_{\dot{A}}}"', between={0.2}{0.8}, Rightarrow, from=1, to=0]
							\arrow["{{\alpha_2}_f}"', between={0.3}{0.7}, Rightarrow, from=3, to=2]
							\arrow["{(S(\dot{B}))(\delta_{j})}"', between={0.4}{0.6}, Rightarrow, from=4-3, to=4]
						\end{tikzcd}
						\\
						&
						\stackunder{\longref{}{eqt:alpha_f}\&\longref{}{eqt:scrA_leg_1}}{$=$}
						\quad\quad\quad
						\begin{tikzcd}[ampersand replacement=\&]
							E \&\&\& {L(D(J_2))} \\
							{L(D(J_1))} \\
							\\
							{(S(\dot{A}))(D(J_1))} \&\& {(S(\dot{B}))(D(J_1))} \& {(S(\dot{B}))(D(J_2))}
							\arrow[""{name=0, anchor=center, inner sep=0}, "{{\epsilon_2}_{J_2}}", loose, from=1-1, to=1-4]
							\arrow[""{name=1, anchor=center, inner sep=0}, "{{\epsilon_2}_{J_1}}", curve={height=-12pt}, loose, from=1-1, to=2-1]
							\arrow[""{name=2, anchor=center, inner sep=0}, "{{\epsilon_1}_{J_1}}"', curve={height=12pt}, loose, from=1-1, to=2-1]
							\arrow["{\eta^{D(J_2)}_{\dot{B}}}", from=1-4, to=4-4]
							\arrow[""{name=3, anchor=center, inner sep=0}, "{L(D(j))}"', from=2-1, to=1-4]
							\arrow["{\eta^{D(J_1)}_{\dot{A}}}"', from=2-1, to=4-1]
							\arrow[""{name=4, anchor=center, inner sep=0}, "{\eta^{D(J_1)}_{\dot{B}}}", from=2-1, to=4-3]
							\arrow["{S(f)_{D(J_1)}}"', loose, from=4-1, to=4-3]
							\arrow["{(S(\dot{B}))(D(j))}"', from=4-3, to=4-4]
							\arrow["{\xi_{J_1}}"', between={0.2}{0.8}, Rightarrow, from=2, to=1]
							\arrow["{{\epsilon_2}_j}", between={0.4}{0.7}, Rightarrow, from=2-1, to=0]
							\arrow["{\eta^{D(j)}_{\dot{B}}}"{description}, draw=none, from=3, to=4-4]
							\arrow["{\eta^{D(J_1)}_f}", between={0.3}{0.7}, Rightarrow, from=4-1, to=4]
						\end{tikzcd}
						\\
						&
						\stackunder{$\xi$ is a modification}{$=$}
						\begin{tikzcd}[ampersand replacement=\&]
							E \&\&\& {L(D(J_2))} \\
							{L(D(J_1))} \\
							\\
							{(S(\dot{A}))(D(J_1))} \&\& {(S(\dot{B}))(D(J_1))} \& {(S(\dot{B}))(D(J_2))}
							\arrow[""{name=0, anchor=center, inner sep=0}, "{{\epsilon_2}_{J_2}}", shift left=2, curve={height=-12pt}, loose, from=1-1, to=1-4]
							\arrow[""{name=1, anchor=center, inner sep=0}, "{{\epsilon_1}_{J_2}}"{description}, shift left=2, curve={height=12pt}, loose, from=1-1, to=1-4]
							\arrow["{{\epsilon_1}_{J_1}}"', loose, from=1-1, to=2-1]
							\arrow["{\eta^{D(J_2)}_{\dot{B}}}", from=1-4, to=4-4]
							\arrow[""{name=2, anchor=center, inner sep=0}, "{L(D(j))}"', from=2-1, to=1-4]
							\arrow["{\eta^{D(J_1)}_{\dot{A}}}"', from=2-1, to=4-1]
							\arrow[""{name=3, anchor=center, inner sep=0}, "{\eta^{D(J_1)}_{\dot{B}}}", from=2-1, to=4-3]
							\arrow["{S(f)_{D(J_1)}}"', loose, from=4-1, to=4-3]
							\arrow["{(S(\dot{B}))(D(j))}"', from=4-3, to=4-4]
							\arrow["{\xi_{J_2}}"', between={0.2}{0.8}, Rightarrow, from=1, to=0]
							\arrow["{{\epsilon_1}_j}", between={0.4}{0.7}, Rightarrow, from=2-1, to=1]
							\arrow["{\eta^{D(j)}_{\dot{B}}}"{description}, draw=none, from=2, to=4-4]
							\arrow["{\eta^{D(J_1)}_f}", between={0.3}{0.7}, Rightarrow, from=4-1, to=3]
						\end{tikzcd}
						\\
						&
						\stackunder{\longref{}{eqt:scrA_leg_2}\&\longref{}{eqt:alpha_f}}{$=$}
						\quad\quad\quad\quad
						\begin{tikzcd}[ampersand replacement=\&]
							\& E \& \\
							{(S(\dot{A}))(X)} \\
							\& {(S(\dot{B}))(X)} \\
							{(S(\dot{B}))(D(J_1))} \&\& {(S(\dot{B}))(D(J_2))}
							\arrow["{{\alpha_1}_{\dot{A}}}"', loose, from=1-2, to=2-1]
							\arrow[""{name=0, anchor=center, inner sep=0}, "{{\alpha_1}_{\dot{B}}}"{description}, loose, from=1-2, to=3-2]
							\arrow[""{name=1, anchor=center, inner sep=0}, "{{\alpha_2}_{\dot{B}}}"{pos=0.52, name=Z}, shift left=3, curve={height=-24pt}, loose, from=1-2, to=3-2]
							\arrow[""{name=1p, anchor=center, inner sep=0}, phantom, from=1-2, to=3-2, start anchor=center, end anchor=center, shift left=3, curve={height=-24pt}]
							\arrow["{S(f)_X}"', loose, from=2-1, to=3-2]
							\arrow["{(S(\dot{B}))(\delta_{J_1})}"', from=3-2, to=4-1]
							\arrow[""{name=2, anchor=center, inner sep=0}, "{(S(\dot{B}))(\delta_{J_2})}", from=3-2, to=4-3]
							\arrow["{(S(\dot{B}))(D(j))}"', from=4-1, to=4-3]
							\arrow["{\mathscr{A}_{\dot{B}}}"'{pos=0.5}, between={0.2}{0.8}, Rightarrow, from=0, to=Z]
							\arrow["{{\alpha_1}_f}"', between={0.3}{0.7}, Rightarrow, from=2-1, to=0]
							\arrow["{(S(\dot{B}))(\delta_{j})}"', between={0.4}{0.6}, Rightarrow, from=4-1, to=2]
						\end{tikzcd}.
					\end{align*}
				}
				Therefore, by the $2$-dimensional universal property of $(S(\dot{B}))(X)$, we conclude that
				\begin{equation}
					\label{eqt:equal_modif}
					\begin{tikzcd}[ampersand replacement=\&,row sep=large]
						\& E \& \\
						\\
						{(S(\dot{A}))(X)} \&\& {(S(\dot{B}))(X)}
						\arrow[""{name=0, anchor=center, inner sep=0}, "{{\alpha_2}_{\dot{A}}}"{description}, curve={height=-12pt}, loose, from=1-2, to=3-1]
						\arrow[""{name=1, anchor=center, inner sep=0}, "{{\alpha_1}_{\dot{A}}}"', curve={height=12pt}, loose, from=1-2, to=3-1]
						\arrow[""{name=2, anchor=center, inner sep=0}, "{{\alpha_2}_{\dot{B}}}", loose, from=1-2, to=3-3]
						\arrow["{S(f)_X}"', loose, from=3-1, to=3-3]
						\arrow["{\mathscr{A}_{\dot{A}}}"', between={0.2}{0.8}, Rightarrow, from=1, to=0]
						\arrow["{{\alpha_2}_f}"', between={0.3}{0.6}, Rightarrow, from=3-1, to=2]
					\end{tikzcd}
					=
					\begin{tikzcd}[ampersand replacement=\&,row sep=large]
						\& E \& \\
						\\
						{(S(\dot{A}))(X)} \&\& {(S(\dot{B}))(X)}
						\arrow["{{\alpha_1}_{\dot{A}}}"', loose, from=1-2, to=3-1]
						\arrow[""{name=0, anchor=center, inner sep=0}, "{{\alpha_2}_{\dot{B}}}", curve={height=-12pt}, loose, from=1-2, to=3-3]
						\arrow[""{name=1, anchor=center, inner sep=0}, "{{\alpha_1}_{\dot{B}}}"{description}, curve={height=12pt}, loose, from=1-2, to=3-3]
						\arrow["{S(f)_X}"', loose, from=3-1, to=3-3]
						\arrow["{\mathscr{A}_{\dot{B}}}"', between={0.2}{0.8}, Rightarrow, from=1, to=0]
						\arrow["{{\alpha_1}_{f}}"', between={0.3}{0.7}, Rightarrow, from=3-1, to=1]
					\end{tikzcd},
				\end{equation}
				and so $\{\mathscr{A}_{\dot{A}}\}_{\dot{A} \in \ob\bbD}$ satisfies the modification axiom.
				
				In short, we obtain a modification $\mathscr{A} \colon \alpha_1 \to \alpha_2$ such that
				$$(S(-))(\delta(-)) \cdot \mathscr{A} = \eta^{D(-)}_{(-)} \cdot \xi.$$ Since $\eta^X \colon \triangle(L(X)) \to \mathrm{ev}_X \cdot S$ is the limit cone for $\dotlim{l}{\mathrm{ev}_X \cdot S}$. Thus, by the $2$-dimensional universal property of $\dotlim{l}{\mathrm{ev}_X \cdot S}$, there is a unique $2$-morphism
				\[
				\begin{tikzcd}[ampersand replacement=\&]
					E \&\& {L(X)}
					\arrow[""{name=0, anchor=center, inner sep=0}, "{w_2}"', curve={height=12pt}, loose, from=1-1, to=1-3, in = 210, out = 315]
					\arrow[""{name=1, anchor=center, inner sep=0}, "{w_1}", curve={height=-12pt}, loose, from=1-1, to=1-3, in = 150, out = 45]
					\arrow["\omega", between={0.3}{0.7}, Rightarrow, from=1, to=0]
				\end{tikzcd}
				\]
				in $\bbK$ such that $\eta^X \cdot \triangle(\omega) = \mathscr{A}$, where $\triangle(\omega) \colon \triangle(w_1) \to \triangle(w_2)$ is the canonically induced modification. Consequently, we get a series of equalities
				\begin{align*}
					\eta^{D(-)} \cdot \xi &= (S(-))(\delta(-))\cdot \mathscr{A} &
					\\
					&= (S(-))(\delta(-))\cdot \eta^X \cdot \triangle(\omega) & \text{(by \longref{Equation}{eqt:equal_modif})}
					\\
					&= \eta^{D(-)}_{(-)} \cdot (L \cdot \delta) \cdot \triangle(\omega) & \text{(by \longref{Equation}{eqt:equal_trans})}
				\end{align*}
				between different modifications in  $[\calJ \times \bbD, \bbK]_{s, \Sigma_\calJ \times \Sigma_\bbD, \Gamma_\calJ \times \Gamma_\bbD}$.
				
				As a result, we have just shown that given any arbitrary modification $\xi \colon \epsilon_1 \to \epsilon_2$, where $\epsilon_1, \epsilon_2 \colon \triangle(E) \to L\cdot D$ are marked-lax natural transformations in $[\calJ, \bbK]_{s, \Sigma_\calJ, \Gamma_\calJ}$, there exists a unique $2$-morphism $\omega \colon w_1 \to w_2$ in $\bbK$ such that $\xi = (L \cdot \delta) \cdot \triangle(\omega)$.
				
				Henceforth, $L \cdot \delta \colon \triangle(L(X)) \to L \cdot D$ serves as the limit cone, and so $L$ is in fact an $\F$-model.
			\end{proof}
			
			With our main theorem in the previous section, we are able to find out limits that lift to $\mathmybb{Mod}_{s, w}(\mathmybb{T}, \mathmybb{K})$. These limits are \emph{$w$-rigged limits}, first introduced in \cite[\S5]{LS:2012}.
			
			\begin{rk}
				As explained in \longref{Remark}{rk:tight_cones_1}, in many interesting and important examples, the weighted cones are tight. Besides, if we do not assume that the cones are tight, we may encounter technical difficulties. In the proof of \longref{Proposition}{pro:colax_lim_arrow}, if $\calJ$ is not a $2$-category, i.e., a chordate $\F$-category, then the $\F$-functor $\mathrm{ev}_{D(-)}\cdot S$ is not well-defined; also, if we do not assume that every object of $\calJ$ is dotted, which means the projection $\delta_J$ is not always tight, then the composite $\Xi_J$ cannot be well-defined because the loose colax natural transformation $S(f)$ has its $2$-component at $\delta_J$ for an arbitrary object $J$ of $\calJ$ given by
				\[
				\begin{tikzcd}[ampersand replacement=\&]
					{(S(\dot{A}))(X)} \& {(S(\dot{A}))(D(J))} \\
					{(S(\dot{B}))(X)} \& {(S(\dot{B}))(X)}
					\arrow["{(S(\dot{A}))(\delta_J)}", from=1-1, to=1-2]
					\arrow["{S(f)_X}"', loose, from=1-1, to=2-1]
					\arrow["{S(f)_{\delta_J}}", between={0.3}{0.7}, Rightarrow, from=1-2, to=2-1]
					\arrow["{S(f)_{D(J)}}", loose, from=1-2, to=2-2]
					\arrow["{(S(\dot{B}))(\delta_J)}"', from=2-1, to=2-2]
				\end{tikzcd},
				\]
				which is pointing to the opposite direction of $\eta^{D(J)}_f$.
			\end{rk}
			
			\begin{coroll}
				\label{cor:limits}
				Let $\mathmybb{T}$ be an enhanced limit $2$-sketch with tight weighted cones, and $\bbK$ be a locally presentable enhanced $2$-category. The restriction
				$$\mathmybb{Mod}_{s, w}(\mathmybb{T}, \mathmybb{K}) \to \mathmybb{Mod}_{s, w}(\mathccal{T}_\tau, \mathmybb{K})$$
				creates all $w$-rigged limits.
				%
			\end{coroll}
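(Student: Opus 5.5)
The plan is to reduce the statement to Theorem~\ref{thm:equivalence} combined with the standard lifting of $w$-rigged limits to enhanced $2$-categories of algebras from \cite[\S5]{LS:2012}. Here $\mathmybb{Mod}_{s, w}(\mathccal{T}_\tau, \bbK)$ coincides with $\FMod{}(\mathccal{T}_\tau, \bbK)$, as observed in the proof of Theorem~\ref{thm:equivalence}. By that theorem, the restriction $i^*_{s, w}$ is, up to the equivalence in Diagram~\eqref{diag:equiv}, the forgetful $\F$-functor $U_{s, w} \colon \FTAlg_{s, w} \to \FMod{}(\mathccal{T}_\tau, \bbK)$ of an enhanced $2$-monad $T$.

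I would then invoke the result of Lack--Shulman that for an enhanced $2$-monad $T$ on an enhanced $2$-category $\bbA$, the forgetful $\FTAlg_{s, w} \to \bbA$ creates all $w$-rigged limits. These are the $\F$-weighted limits whose weights are rigged appropriately, so that the limit projections are tight and the limit detects tightness. This requires only that $T$ be an $\F$-monad; no preservation hypothesis on $T$ is needed, as in the classical fact that $\mathrm{T}\text{-}\mathrm{Alg}_w$ has $w$-rigged (for instance, oplax and pseudo) limits created by the forgetful functor. Creation is stable under composing with an equivalence of enhanced $2$-categories commuting with the forgetful functors, at least in the "up to isomorphism" sense of creation. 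So $i^*_{s, w}$ creates $w$-rigged limits in exactly the sense the $U_{s, w}$-statement provides.

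Finally, I would check that $\FMod{}(\mathccal{T}_\tau, \bbK)$ actually has these limits, so that "creates" also yields existence. This holds because $\bbK$ is complete and the inclusion $\FMod{}(\mathccal{T}_\tau, \bbK) \hookrightarrow [\mathccal{T}_\tau, \bbK]$ is reflective by Theorem~\ref{thm:reflectivity} together with Lemma~\ref{lem:F}, so it is closed under all $\F$-weighted limits.

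The main obstacle is the bookkeeping around the equivalence. The equivalence in Theorem~\ref{thm:equivalence} comes from \cite[Theorem 26]{Bourke:2014} and need not be an isomorphism, whereas rigged-limit creation in \cite{LS:2012} is stated for $U_{s, w}$ on the nose. I would first verify that the comparison $\F$-functor commutes strictly with the forgetful functors, which the commutative triangle~\eqref{diag:equiv} asserts. I would then transport the created limit cone along the equivalence, checking that tightness of the cone and its detection of tight morphisms are preserved. This is the step where the rigging conditions must be matched carefully: the tight part is given by strict $T$-morphisms, corresponding to $\F$-natural transformations, and the loose part by $w$-$T$-morphisms, corresponding to loose $w$-natural transformations.
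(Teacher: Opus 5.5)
Your proposal is correct and follows the paper's own proof. Both arguments get completeness of $\FMod{}(\mathccal{T}_\tau, \bbK)$ from its reflectivity in the complete $[\mathccal{T}_\tau, \bbK]$, take creation of $w$-rigged limits by $U_{s, w}$ from \cite[Theorem 5.10]{LS:2012}, and transport this along the equivalence in \longref{Diagram}{diag:equiv}. Your remark that composing with an equivalence only gives creation up to isomorphism is a point the paper passes over silently, so it sharpens the argument rather than departing from it.
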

			
			\begin{proof}
				Since $\bbK$ is complete, and by \longref{Proposition}{pro:V_models_monadic}, the inclusion $\FMod{}(\mathccal{T}_\tau, \bbK) \hookrightarrow [\mathccal{T}_\tau, \bbK]$ is reflective, thus, $\FMod{s, w}(\mathccal{T}_\tau, \bbK) = \FMod{}(\mathccal{T}_\tau, \bbK)$ is also complete.
				
				By \cite[Theorem 5.10]{LS:2012}, the underlying $\F$-functor $U_{s, w}$ in \longref{Diagram}{diag:equiv} creates all $w$-rigged limits.
				
				Since $i^*_{s, w} \colon \mathmybb{Mod}_{s, w}(\mathmybb{T}, \mathmybb{K}) \to \mathmybb{Mod}_{s, w}(\mathccal{T}_\tau, \mathmybb{K})$ is the composite
				$$\FMod{s, w}(\mathmybb{T}, \mathmybb{K}) \xrightarrow{\simeq} \FTAlg_{s, w} \xrightarrow{U_{s, w}} \FMod{}(\mathccal{T}_\tau, \mathmybb{K})$$
				of an equivalence and $U_{s, w}$, we conclude that $i^*_{s, w}$ creates all $w$-rigged limits.
			\end{proof}
			
			\begin{example}[{\cite{ABK:2024}}]
				Monoidal categories with strict and $w$-monoidal functors, double categories with strict and $w$-double functors, double (op)fibrations with strict and $w$-morphisms of fibrations, all admit $w$-rigged limits, as they are models of enhanced $2$-sketches with tight weighted cones to a complete enhanced $2$-category.
			\end{example}

			\bibliographystyle{alpha}
			
		\end{document}